\newtheorem{thm}{Theorem}[section]
\newtheorem{lem}[thm]{Lemma}
\newtheorem{cor}[thm]{Corollary}
\newtheorem{rem}[thm]{Remark}
\newcommand{\vr}{\varrho}
\newcommand{\vf}{\varphi}
\newcommand{\sig}{\Sigma_1}
\newcommand{\Ss}{{\mathcal{S}}}
\newcommand{\Vv}{{\mathcal{V}}^+}
\newcommand{\Pp}{{\mathcal{P}}}
\newcommand{\Rs}{{\mathcal{R}}}
\newcommand{\Complex}{{\mathbb{C}}}
\newcommand{\Real}{{\mathbb{R}}}
\newcommand{\Us}{\mathcal{U}}
\newcommand{\szego}{Szeg\H o }
\newcommand{\footremember}[2]{%
   \footnote{#2}
    \newcounter{#1}
    \setcounter{#1}{\value{footnote}}%
}
\newcommand{\footrecall}[1]{%
    \footnotemark[\value{#1}]%
}
\begin{document}

\title{On the absolute stability regions corresponding to partial sums of the exponential function}
\author{David Ketcheson\footremember{KAUST}{Supported by Award No.  FIC/2010/05 -- 2000000231, made by King Abdullah University of Science and Technology (KAUST).
Email addresses: {\texttt{david.ketcheson@kaust.edu.sa}},
 {\texttt{katihi@sze.hu}}, {\texttt{lajos.loczi@kaust.edu.sa}}.} \and   Tiham\'er A. Kocsis\footrecall{KAUST} \footnote{Supported by T\'AMOP-4.2.2.A-11/1/KONV-2012-0012: Basic research for the development of hybrid and electric vehicles. The Project is supported by the Hungarian Government and co-financed by the European Social Fund.} \and Lajos L\'oczi\footrecall{KAUST}}

\maketitle

\begin{abstract}
Certain numerical methods for initial value problems have as stability function
the $n^\mathrm{th}$ partial sum of the exponential function.
We study the stability region, \textit{i.e.}, the set in the complex plane over which the $n^\mathrm{th}$ partial sum
has at most unit modulus.
It is known that the asymptotic shape of  the part of the stability region in
the left half-plane is a semi-disk.
We quantify this by providing disks that
enclose or are enclosed by the stability region or its left half-plane part. 
The radius of the smallest disk centered at the origin that contains the stability
region (or its portion in the left half-plane) is determined for $1\le n\le 20$.
Bounds on such radii are proved for $n\ge 2$; these bounds are shown to be
optimal in the limit $n\to +\infty$. 
We prove that the stability region and its complement, restricted to the imaginary axis,
consist of alternating intervals of length tending to $\pi$, as $n\to\infty$.
Finally, we prove that a semi-disk in the left half-plane with vertical
boundary being the imaginary axis and centered at the origin is included in the
stability region if and only if $n\equiv 0 \mod 4$ or $n\equiv 3\mod 4$. 
The maximal radii of such semi-disks are exactly determined for $1\le n\le 20$.
\end{abstract}

\section{Introduction}
For a given positive integer $n$, consider the region 
%\begin{align}\label{unscaledstabilityregion}
\[
\Us_n:=\left\{z\in \Complex : \displaystyle \left|\sum_{k=0}^n \frac{z^k}{k!}\right|\le 1\right\},
\]
%\end{align}
\textit{i.e.}, the set 
in the complex plane over which the degree-$n$ Taylor polynomial
of the exponential has at most unit modulus.  
These sets correspond to the region
of absolute stability of some common numerical solvers for ordinary
differential equations, 
including extrapolation methods, Taylor methods, and certain Runge--Kutta methods.
Indeed, any
one-step method for which the number of stages or derivatives used is  equal
to the order of accuracy must have as stability function the corresponding Taylor
polynomial of the exponential. 
%To understand the stability of such methods, it is thus
%important to have a good description of these regions. 

%The aim of the 
%present paper is to extend some existing asymptotic results by proving that certain
%disks or semi-disks contain or are contained in $\Us_n$. Semi-disks occur naturally, because, 
%as we see, the portion of $\Us_n$ in the left-half plane has asymptotically this shape.

It is convenient to introduce some preliminary notation first. Let
\[
\Pp_n(z):=\displaystyle \sum_{k=0}^n \frac{(nz)^k}{k!}\quad (n\in\mathbb{N}^+)
\] 
denote the scaled $n^\mathrm{th}$ partial sum of the exponential function,
and let 
\[
\Ss_n:=\{ z\in\mathbb{C} : |{\mathcal{P}}_n(z)|\le 1 \}.
\]
Many results can be stated
more compactly by using $\Ss_n$ instead of $\Us_n$. We refer to $\Us_n$ as the
\textit{unscaled region} and $\Ss_n$ as the \textit{scaled region}.  Figures  \ref{firstfewunscaledTaylorStabRegions} and \ref{firstfewscaledTaylorStabRegions}
show the boundaries of the first few unscaled and scaled stability regions, respectively.

\begin{figure}
  \centering
  \includegraphics[width=5.9in]{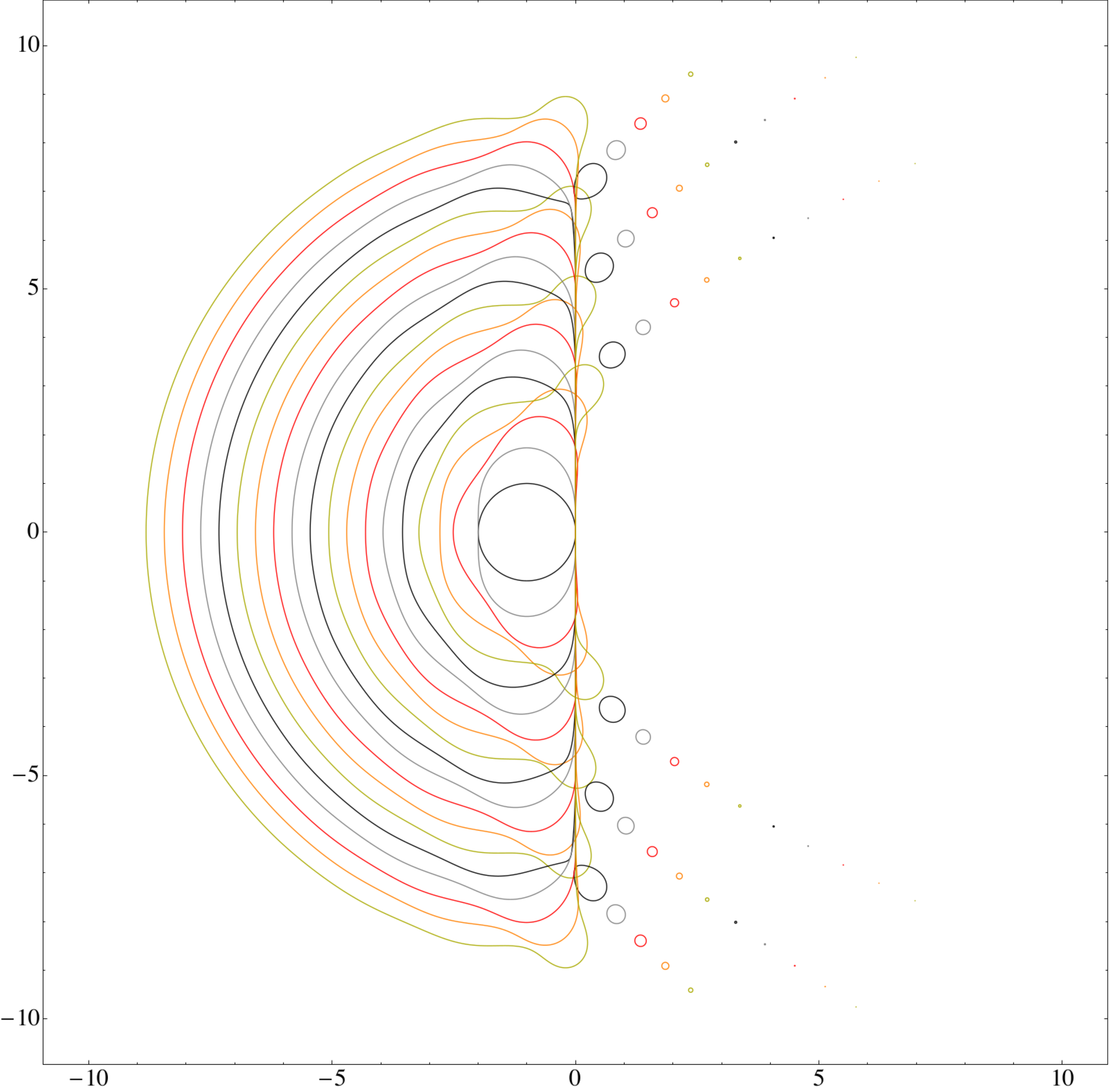}
  \caption{The boundary curve(s) of 
the unscaled stability regions $\Us_n$ for $1\le n\le 20$ in the square $-10\le\Re(z)\le 10$,
$-10\le\Im(z)\le 10$. Five colors are used cyclically for different $n$  values.
Compare this figure with Figure \ref{checkerboard} (where 5 and 6
appear as certain block lengths) and Lemma \ref{lemmaalongtheimaginaryaxis} (which has ``period 4").}\label{firstfewunscaledTaylorStabRegions}
\end{figure}

\begin{figure}
  \centering
  \includegraphics[width=5.9in]{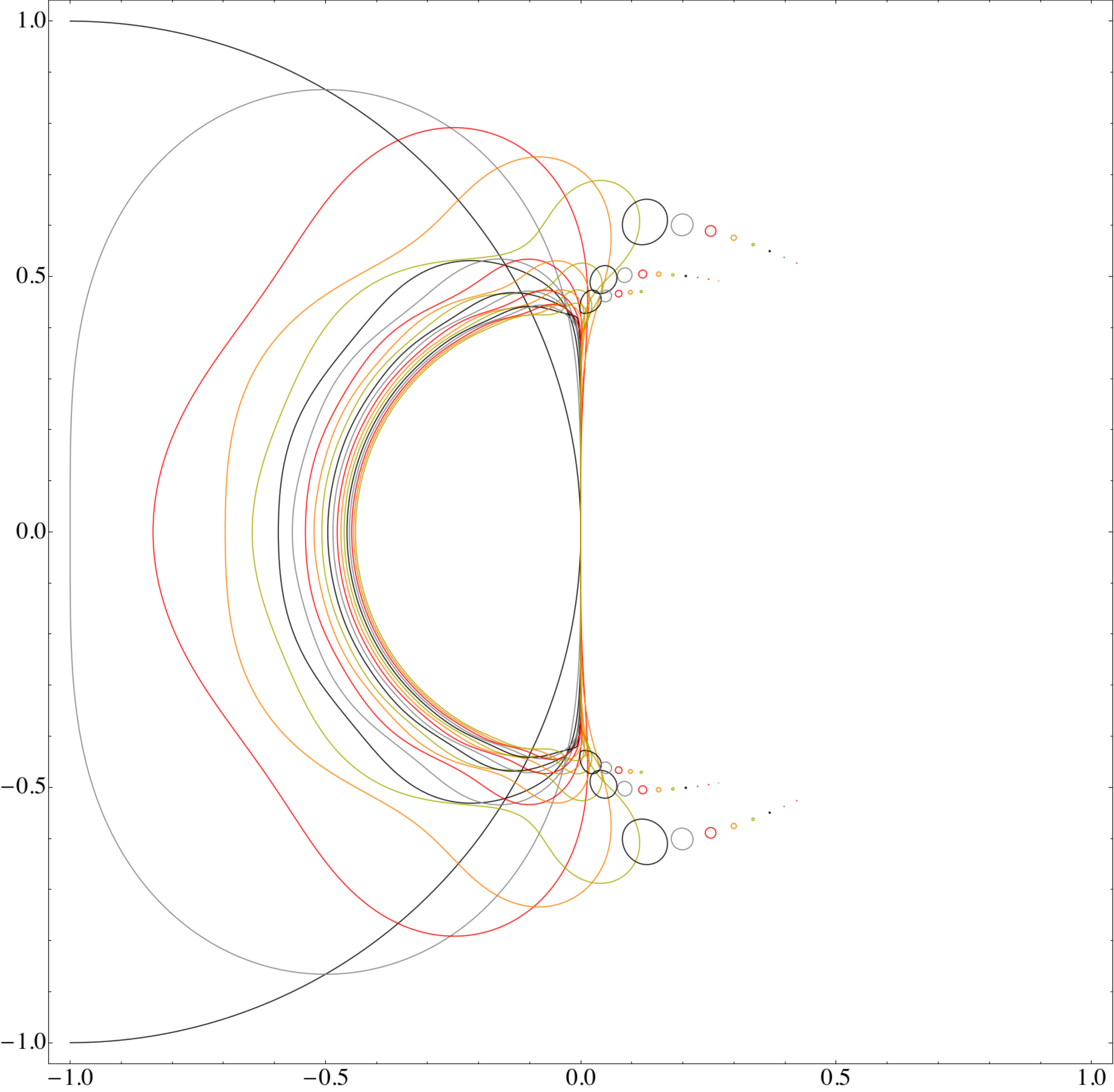}
  \caption{The boundary curve(s) of 
the scaled stability regions $\Ss_n\subset\Complex$ 
for $1\le n\le 20$ in the square $-1\le\Re(z)\le 1$, $-1\le\Im(z)\le 1$. 
The color scheme is the same as that of  Figure \ref{firstfewunscaledTaylorStabRegions}. 
Although the general number of connected components is not known, numerical
investigations suggest that the set $\Ss_n$ is connected for $1\le n\le 5$, has
3 connected components for $6\le n\le 10$, and has 5 connected components for
$n=11$, and so on; see also \cite[pp.\,73--74]{jeltschnevanlinna1981}.}
\label{firstfewscaledTaylorStabRegions}
\end{figure}

%, and so we use $\Ss_n$ in most of this work.
%(note the difference from the unscaled stability region (\ref{unscaledstabilityregion})). 
By $D_\vr(z_0)$ we mean the closed disk of radius $\vr>0$ centered at $z_0\in\Complex$, 
 and 
let $\Sigma_1\subset \Complex$ denote the Szeg\H o region, which is depicted in Figure \ref{fig1}
and defined in Section \ref{notationsection}. The boundary of 
the Szeg\H o region,  $\partial \sig$, is known as the Szeg\H o curve. The 
closed left half of the complex plane is denoted by  
$\{\Re\le 0\}$, the imaginary axis is $\{\Re = 0\}$, and instead of $D_\vr(0)$ we simply write $D_\vr$.

We now recall some results from \cite{jeltschnevanlinna} regarding the shape of
the scaled stability region 
%(formulated in terms of $\Ss_n$)  
for large enough $n$.
Some sets from Theorems \ref{shiftddiskintersectionAinclusion} and \ref{limitofSn} are depicted
in Figure \ref{fig1/2}.

\begin{thm}[\cite{jeltschnevanlinna}, Lemma 5.1]\label{shiftddiskintersectionAinclusion}
For any $\vr>0$ there exists a positive index $n_0(\vr)$ such that
\[
\forall n\ge n_0(\vr):\quad D_\vr(-\vr)
\cap \sig \subset \Ss_n.
\]
\end{thm}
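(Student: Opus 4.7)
The plan is to prove $|\Pp_n(z)|\le 1$ pointwise on $D_\vr(-\vr)\cap\sig$ for all sufficiently large $n$, using three elementary inequalities. The Szeg\H{o} condition $z\in\sig$ yields $|z|e\le e^{\Re z}$; the disk condition $z\in D_\vr(-\vr)$ is equivalent to $|z|^2+2\vr\Re z\le 0$ and in particular forces $\Re z\le 0$, with equality only at the origin; and the integral Taylor remainder $R_n(z):=e^{nz}-\Pp_n(z)=\tfrac{(nz)^{n+1}}{n!}\int_0^1 (1-t)^n e^{tnz}\,dt$ satisfies
\[
|R_n(z)|\le (n|z|)^{n+1}/(n+1)!
\]
whenever $\Re z\le 0$, since then $|e^{tnz}|\le 1$ for all $t\in[0,1]$.

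Raising the Szeg\H{o} inequality to the power $n+1$ and invoking Stirling's formula turns this into the main a priori bound
\[
|\Pp_n(z)|\le e^{n\Re z}\Bigl(1+\frac{1+o(1)}{e\sqrt{2\pi(n+1)}}\Bigr),\qquad z\in D_\vr(-\vr)\cap\sig.
\]
This already settles the ``bulk'' regime $|\Re z|\ge 1/n$, in which $e^{n\Re z}\le e^{-1}$ drives the right-hand side below $1$ for every sufficiently large $n$ (independently of $\vr$). The only delicate case is the thin sliver $|\Re z|<1/n$, where the above bound only yields $|\Pp_n(z)|\le 1+O(n^{-1/2})$ and so is insufficient on its own.

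In the sliver, the disk inequality forces $|z|<\sqrt{2\vr/n}$, and I would return to the raw Taylor remainder. A short Stirling computation shows
\[
(n|z|)^{n+1}/(n+1)!\le (2\vr n)^{(n+1)/2}/(n+1)!\sim (2\vr e^2/n)^{(n+1)/2}/\sqrt{2\pi(n+1)},
\]
which decays \emph{super-exponentially} once $n>2\vr e^2$. Meanwhile, the elementary inequality $1-e^{-x}\ge x/2$ for $x\in[0,1]$, together with the disk estimate $|\Re z|\ge |z|^2/(2\vr)$, yields $1-e^{n\Re z}\ge n|z|^2/(4\vr)$. A short comparison then shows that the ratio $|R_n(z)|/[1-e^{n\Re z}]$ is bounded above by $4\vr\,n\,(2\vr n)^{(n-1)/2}/(n+1)!$, which also tends to $0$ super-exponentially. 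Hence for $n$ sufficiently large $|R_n(z)|\le 1-e^{n\Re z}$ uniformly on the sliver, giving $|\Pp_n(z)|\le e^{n\Re z}+|R_n(z)|\le 1$ (with equality only at $z=0$, where $\Pp_n(0)=1$). The main obstacle is exactly this sliver analysis: the coarse Szeg\H{o}-based bound falls short of $1$ by $O(1/\sqrt{n})$, and one must instead exploit the super-exponential smallness of the raw remainder---itself a consequence of the disk's quadratic tangency $|z|\lesssim\sqrt{|\Re z|}$ to the imaginary axis---to dominate the linearly-vanishing quantity $1-e^{n\Re z}$.
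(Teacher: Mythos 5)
Your argument is correct, and it is genuinely different from the route the paper relies on: the paper does not reprove this statement but imports it from Jeltsch--Nevanlinna, whose Lemma 5.1 (as Section \ref{preliminariessection} explains) rests on the Szeg\H o--Buckholtz machinery for the zeros $\zeta_k(n)$ and the auxiliary function $T_n$, the same toolkit the authors later quantify in Sections \ref{discinclusionsection} and \ref{semidiscinclusionsection}. You instead give a self-contained pointwise estimate using only the integral form of the Taylor remainder, the defining inequality $|z|e\le e^{\Re z}$ of $\sig$, and Stirling's lower bound for $(n+1)!$; I checked the key steps ($|R_n(z)|\le (n|z|)^{n+1}/(n+1)!$ for $\Re z\le 0$, the resulting bound $|\Pp_n(z)|\le e^{n\Re z}\bigl(1+\tfrac{1}{e\sqrt{2\pi(n+1)}}\bigr)$ on $\sig\cap\{\Re\le 0\}$, and the sliver comparison $|R_n(z)|\le \tfrac{4\vr\,n^{n}(2\vr/n)^{(n-1)/2}}{(n+1)!}\,\bigl(1-e^{n\Re z}\bigr)$, which sidesteps the $0/0$ at the origin) and they all go through. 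The decisive idea---that the coarse bound fails only by $O(n^{-1/2})$ in a strip $|\Re z|<1/n$, where the quadratic tangency $|z|^2\le 2\vr|\Re z|$ of $D_\vr(-\vr)$ to the imaginary axis makes the raw remainder super-exponentially small while $1-e^{n\Re z}$ vanishes only linearly---is exactly where the disk hypothesis (as opposed to a general subset of $\sig\cap\{\Re\le 0\}$) enters, and you have identified and exploited it correctly. What your approach buys is elementarity and effectivity: an explicit $n_0(\vr)$ of order $c\,\vr$ could be read off, which is very much in the spirit of this paper's stated goal of making the Jeltsch--Nevanlinna asymptotics quantitative; what it does not give is the structural information about $\Ss_n$ (zero clustering along $\partial\sig$) that the original proof provides as a byproduct. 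One cosmetic remark: phrasing the sliver step as a ratio bound invites the reader to worry about $z=0$; stating it directly as the product inequality above avoids this.
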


\begin{thm}[\cite{jeltschnevanlinna}, Theorem 5.3]\label{shiftddiskinclusion}
For any $0<\vr<\frac{1}{2e}$ there exists a positive index $n_0(\vr)$ such that
\[
\forall n\ge n_0(\vr):\quad D_\vr(-\vr) \subset \Ss_n.
\]
\end{thm}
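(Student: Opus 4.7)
The plan is to augment Theorem \ref{shiftddiskintersectionAinclusion} with a direct asymptotic estimate on the portion of $D_\varrho(-\varrho)$ that lies outside $\sig$. Since $\varrho<\tfrac{1}{2e}$ implies $D_\varrho(-\varrho)\subset\{|z|\le 2\varrho\}\subset\{|z|<1\}$, it suffices to establish $|\Pp_n(z)|\le 1$ on the set
\[
A_\varrho := D_\varrho(-\varrho)\setminus \sig
\]
for all large $n$, because Theorem \ref{shiftddiskintersectionAinclusion} already handles $D_\varrho(-\varrho)\cap \sig$.

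The first step is a geometric observation: $\overline{A_\varrho}$ is bounded away from the imaginary axis. Indeed, $0$ is an interior point of $\sig$ (visible from Figure \ref{fig1}), so a whole neighbourhood of $0$ is disjoint from $\overline{A_\varrho}$; and $D_\varrho(-\varrho)$ meets the imaginary axis only at $0$. Hence on the compact set $\overline{A_\varrho}$ we have $\Re z<0$, and by continuity there exists $\alpha=\alpha(\varrho)>0$ with $\Re z\le -\alpha$ for every $z\in\overline{A_\varrho}$.

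The second step is the tail estimate. Decompose $\Pp_n(z)=e^{nz}-R_n(z)$ with
\[
R_n(z):=\sum_{k=n+1}^\infty \frac{(nz)^k}{k!} = \frac{(nz)^{n+1}}{n!}\int_0^1 (1-t)^n e^{tnz}\,dt.
\]
Because $\Re z\le 0$ on the disk, the integral is bounded by $1/(n+1)$, which gives $|R_n(z)|\le |nz|^{n+1}/(n+1)!$. Stirling's formula converts this into
\[
|R_n(z)|\le \frac{C\,(|z|e)^{n+1}}{\sqrt{n}}
\]
for an absolute constant $C$. The hypothesis $\varrho<\tfrac{1}{2e}$ is consumed precisely here: it forces $2\varrho\, e<1$, so $|R_n(z)|\to 0$ geometrically, uniformly for $z\in D_\varrho(-\varrho)$. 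Combining this with the bound $|e^{nz}|\le e^{-n\alpha}$ valid on $A_\varrho$ yields
\[
|\Pp_n(z)|\le e^{-n\alpha}+\frac{C(2\varrho e)^{n+1}}{\sqrt{n}}\longrightarrow 0\qquad (n\to\infty)
\]
uniformly on $A_\varrho$, so $|\Pp_n(z)|<1$ there once $n$ exceeds some $n_1(\varrho)$. Together with Theorem \ref{shiftddiskintersectionAinclusion} this completes the proof.

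The main obstacle, and the subtle point worth emphasising, is the sharpness of the threshold $\tfrac{1}{2e}$. Theorem \ref{shiftddiskintersectionAinclusion} alone does not suffice once $\varrho$ is large enough that $D_\varrho(-\varrho)$ bulges out of $\sig$, and the elementary tail bound above becomes powerless at precisely $\varrho=\tfrac{1}{2e}$: the leftmost tip $z=-1/e$ of the disk then satisfies $|z|\,e=1$, so $(|z|e)^{n+1}$ no longer decays. Thus $\tfrac{1}{2e}$ is the natural critical value dictated by this method, and any proof aiming for a larger radius would need a finer bound on $R_n$ in the regime $|z|=1/e$.
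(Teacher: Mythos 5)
Your argument is correct, but note that the paper itself offers no proof of this statement: Theorem \ref{shiftddiskinclusion} is quoted verbatim from Jeltsch and Nevanlinna (their Theorem 5.3), so there is no in-paper proof to compare against. Judged on its own terms, your derivation is sound. The decomposition $D_\vr(-\vr)=\bigl(D_\vr(-\vr)\cap\sig\bigr)\cup A_\vr$ correctly reduces the problem to the exterior piece, the integral form of the Taylor remainder with $\Re z\le 0$ gives $|R_n(z)|\le |nz|^{n+1}/(n+1)!$, and Stirling turns this into geometric decay exactly because $2\vr e<1$; combined with $|e^{nz}|\le e^{-n\alpha}$ on $A_\vr$ this forces $|\Pp_n|\to 0$ uniformly there. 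The one point you should tighten is the appeal to Figure \ref{fig1} for the interiority of $0$ in $\sig$: a figure is not a proof, but the fix is immediate, since $|z|\le 1/4$ gives $|ze^{1-z}|\le \tfrac14 e^{5/4}<1$ (this is exactly Lemma \ref{d1/4} of the paper), so $D_{1/4}\subset\sig$ and hence $\overline{A_\vr}\subset D_\vr(-\vr)\setminus B_{1/4}(0)$, on which $\Re z\le -\alpha(\vr)<0$ by compactness, as you argue. Your closing remark on why $\tfrac{1}{2e}$ is critical for this method is also consistent with Theorem \ref{limitofSn}, since the leftmost point $-1/e$ of $D_{1/(2e)}(-1/(2e))$ lies on $\partial D_{1/e}$, the boundary of the limiting region $\Ss_\infty$ in the left half-plane.
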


\begin{thm}[\cite{jeltschnevanlinna}, Theorem 5.4]\label{limitofSn}
Let 
\[
\Ss_\infty:=\{ z\in \mathbb{C} : \exists \mathrm{\ a\ strictly\ increasing\ sequence\ } n_k\in
\mathbb{N}^+\  (k=1, 2, \ldots) \mathrm{\ and\ }\]
\[
\exists\,z_{n_k}\in \Ss_{n_k} \mathrm{\ such\ that\ } \lim_{k\to +\infty} z_{n_k}=z\}.
\]
Then
\[
\Ss_\infty =\left(D_{1/e}\cap \{\Re\le 0\}\right)\cup \partial\sig.
\]
\end{thm}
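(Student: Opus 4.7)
The proof splits into the two inclusions $\Ss_\infty \supseteq (D_{1/e}\cap\{\Re\le 0\})\cup\partial\Sigma_1$ and $\Ss_\infty \subseteq (D_{1/e}\cap\{\Re\le 0\})\cup\partial\Sigma_1$.

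For the first inclusion, points of $\partial\Sigma_1$ are handled by the classical result of \szego that the zeros of $\mathcal{P}_n$ cluster densely on the \szego curve; since $\mathcal{P}_n$ vanishes at each of its zeros, every $z\in\partial\Sigma_1$ is a limit of points in $\Ss_n$. For the half-disk $D_{1/e}\cap\{\Re\le 0\}$, I would first observe that $\Sigma_1\cap\{\Re\le 0\}\subseteq D_{1/e}$: writing a boundary point as $z=re^{i\theta}$ gives $r\cos\theta = 1+\log r$, and $\cos\theta\le 0$ then forces $r\le 1/e$. Points of $\Sigma_1\cap\{\Re\le 0\}$ therefore lie in some $D_\vr(-\vr)\cap\Sigma_1$ for $\vr$ sufficiently large, hence in $\Ss_n$ eventually by Theorem \ref{shiftddiskintersectionAinclusion}. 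The remaining points of $D_{1/e}\cap\{\Re\le 0\}$ form a lens-shaped region between the \szego curve and the circle $|z|=1/e$ (on the negative real axis, the \szego boundary only reaches $-W(1/e)\approx-0.278$, whereas $D_{1/e}$ reaches $-1/e\approx-0.368$); for these I would decompose $\mathcal{P}_n(z) = e^{nz}-R_n(z)$ with $R_n(z)=\sum_{k\ge n+1}(nz)^k/k!$, and combine Stirling's formula with the geometric decay of the tail to get $|R_n(z)| = O\bigl(|z|(e|z|)^n/\sqrt{n}\bigr)$, which vanishes when $|z|\le 1/e$. Together with $|e^{nz}|\le 1$ this yields $|\mathcal{P}_n(z)|\to 0$, so $z\in\Ss_n$ eventually. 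Boundary points with $|z|=1/e$ or $\Re z = 0$ are then obtained by approximation from the open interior of $D_{1/e}\cap\{\Re<0\}$.

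For the reverse inclusion, I would take $z$ off both target pieces and exhibit a neighborhood $U$ of $z$ with $U\cap\Ss_n=\emptyset$ for all $n$ sufficiently large, which precludes $z\in\Ss_\infty$. The argument rests on the large-$n$ asymptotics of $\mathcal{P}_n$ uniformly on compacta: on compact subsets of $\Sigma_1^\circ$ one has $\mathcal{P}_n(w)\sim e^{nw}$, while on compact subsets of $\Complex\setminus\overline{\Sigma_1}$ (avoiding $w=1$) the tail dominates, $\mathcal{P}_n(w)\sim -R_n(w)$ with $|R_n(w)|\asymp |w|(e|w|)^n/\sqrt{n}$. Since $\Sigma_1\cap\{\Re\le 0\}\subseteq D_{1/e}$, a point $z\in\Sigma_1^\circ$ that is not in the target must satisfy $\Re z>0$, so $|\mathcal{P}_n(w)|\sim e^{n\Re w}\to\infty$ on a neighborhood. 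If instead $z\notin\overline{\Sigma_1}$ is not in the target, then either $\Re z>0$ or $|z|>1/e$, and in both cases $|we^{1-w}|>1$ on a neighborhood of $z$ so that $|R_n(w)|$ diverges; the $n$-th roots $e^{\Re w}$ of $|e^{nw}|$ and $e|w|$ of $|R_n(w)|$ differ by the factor $|we^{1-w}|\ne 1$, ruling out any cancellation in $\mathcal{P}_n = e^{nw}-R_n$, so $|\mathcal{P}_n(w)|\to\infty$ uniformly on $U$.

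The main obstacle is the uniform asymptotic for $R_n(w)$ on compacta outside $\overline{\Sigma_1}$ together with the noncancellation argument for $e^{nw}$ and $R_n(w)$ in the identity $\mathcal{P}_n = e^{nw}-R_n$. Pointwise estimates via Stirling and geometric tail bounds are routine, but upgrading to compacta-uniform control and handling the transition across $\partial\Sigma_1$ is precisely what singles out the \szego curve as a distinguished subset of $\Ss_\infty$ and simultaneously forces the circle $|z|=1/e$ as the natural boundary of the left half-plane portion. Since these asymptotics underlie the Jeltsch--Nevanlinna framework already invoked in Theorems \ref{shiftddiskintersectionAinclusion}--\ref{shiftddiskinclusion}, much of the technical burden can be imported from \cite{jeltschnevanlinna} rather than re-derived.
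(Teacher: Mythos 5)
First, note that the paper does not prove this statement: it is Theorem 5.4 of \cite{jeltschnevanlinna}, quoted as a prerequisite, so there is no in-paper proof to compare against and your proposal has to stand on its own. Your forward inclusion is essentially sound: \szego clustering handles $\partial\sig$; the computation $r\cos\theta\ge 1+\log r$ correctly gives $\sig\cap\{\Re\le 0\}\subset D_{1/e}$; and on $D_{1/e}\cap\{\Re<0\}$ the decomposition $\Pp_n=e^{nz}-R_n$ with $|R_n(z)|=O\bigl(|z|(e|z|)^n/\sqrt{n}\bigr)$ (valid there, since the tail's terms decay geometrically with ratio $<|z|<1$) does give $|\Pp_n(z)|\to 0$; the only care needed is that $|e^{nz}|\le 1$ alone is not enough on $\{\Re=0\}$, but your closure/approximation remark covers that since $\Ss_\infty$ is a Kuratowski upper limit and hence closed.

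The converse inclusion has a genuine gap in the region $|z|\ge 1$. Your claim that $z\notin\overline{\sig}$ and $z$ off the target forces $|we^{1-w}|>1$ near $z$ is false: take $z=2$, which lies outside $\sig$ only because $|z|>1$, yet $|2e^{-1}|<1$. Worse, the asymptotic $|R_n(w)|\asymp |w|(e|w|)^n/\sqrt{n}$ also fails for $|w|>1$: the terms $(nw)^k/k!$ of the tail are \emph{increasing} for $n<k<n|w|$, the tail's mass peaks near $k\approx n|w|$, and in fact $R_n(2)=e^{2n}-\Pp_n(2)\sim e^{2n}$, which is exponentially larger than $(2e)^n/\sqrt{n}$. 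So both the dichotomy and the noncancellation argument break exactly where the ``first term of the tail dominates'' heuristic stops working, i.e.\ outside the unit disk (and already the lower bound $|R_n|\gtrsim (e|w|)^n/\sqrt n$ needs the uniform limit $\sum_{j\ge 0}(nw)^j/((n+2)\cdots(n+j+1))\to 1/(1-w)$, which again requires $|w|<1$). The fix is to treat $|z|\ge 1$ separately — e.g.\ via the paper's Lemma \ref{extrapollemma4.3shape} and Theorem \ref{2eoverdeltasndn+deltatheorem}, whose proofs are independent of Theorem \ref{limitofSn}, or directly via Buckholtz's bounds (\ref{Tnfirstestimate})--(\ref{Tnsecondestimate}) and the ODE (\ref{TnODE}), which give $|T_n(z)|\ge c(\delta)>0$ uniformly on compacta at distance $\ge\delta$ from $\sig$ and hence $|\Pp_n(z)|=\frac{(n|z|)^n}{n!}|T_n(z)|\gtrsim c\,(e|z|)^n/\sqrt{n}\to\infty$ whenever $|z|>1/e$; this single mechanism replaces your $e^{nw}$-versus-$R_n$ comparison everywhere outside $\sig$ and is the route the Jeltsch--Nevanlinna/Buckholtz framework actually takes.
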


In practical numerical analysis, usually small values of $n$
are relevant.  Hence the above theorems cannot be applied, since they do not
specify the value of $n_0$.
The direct motivation for the present work comes from \cite{fullversion}, where
inclusions of the form $\Us_n\subset D_{\vr_n}$
or $\Us_n\cap \{\Re\le 0\}\subset D_{\vr_n}\cap \{\Re\le 0\}$
with $\vr_n>0$ as small as possible were needed for all
(or large enough) $n\ge 1$ values. The primary aim of the present paper is to prove 
some results of this type. 
%Then we show that an asymptotically regular pattern
%emerges when the intersection of $\Us_n$ with the imaginary axis is considered. 
%Finally we characterize those $n$ values for which
%$D_{\vr_n}\cap \{\Re\le 0\}  \subset \Us_n$ with some $\vr_n>0$. ***Sn?***

The structure and main results of the paper are as follows. 
In Section \ref{notationsection} we establish notation, then in Section \ref{preliminariessection} we review some results relevant to our study. 
In Section \ref{discinclusionsection} we prove the inclusion $\Ss_n\subset D_{1.6}$ for all $n\ge 2$, and, for any $0<\varepsilon<0.6$, the asymptotically optimal bound $\Ss_n\subset D_{1+\varepsilon}$ for 
$n\ge n_0(\varepsilon)$ large enough.  In Section \ref{semidiscinclusionsection} we give
similar bounds for the part of $\Ss_n$ that lies in the closed left half-plane: the 
constant $1.6$ can be replaced by $0.95$ in 
$\{\Re\le 0\}$, and $1+\varepsilon$ by $1/e+\varepsilon$ ($\varepsilon>0$). 
Sections \ref{discinclusionsection} and \ref{semidiscinclusionsection} also contain
exact values for the smallest $D_{\vr_n}$ containing $\Ss_n$ and $\Ss_n\cap \{\Re\le 0\}$,
respectively, for $1\le n\le 20$, based on direct computation. 
Section \ref{nearimaginarysection} contains some results regarding the 
boundary of $\Us_n$ near the imaginary axis 
(for aesthetic reasons, we consider the boundary of $\Us_n$ and not
that of $\Ss_n$). In particular, we show that the slices $\Us_n\cap \{\Re = 0\}$ 
consist of alternating intervals and gaps, with the length of each interval and
each gap converging to $\pi$ as $n\to +\infty$, and with offset depending on $n \mod 4$.
In Section \ref{sec:semidisksection} we prove that 
%determine the precise conditions under which 
$D_{\vr_n}\cap \{\Re\le 0\}  \subset \Ss_n$ for some $\vr_n>0$
if and only if $n\equiv 0 \mod 4$ or $n\equiv 3\mod 4$.
We compute the largest such constants $\vr_n$ for $1\le n\le 20$.
%We also plot some radial slices of the sets $\Ss_n \cap
%\{\Re\le 0\}$. 
Finally, Section \ref{sec:lemmas} contains
the proofs of some technical lemmas required for the main results.

\subsection{Notation}\label{notationsection}
In the definition of $\Us_n$ and $\Pp_n$, we use the usual convention that $0^0=1$.

The real and imaginary part of a complex number $z\in\Complex$ is denoted by
$\Re(z)$ and $\Im(z)$, respectively.
For $\delta\in\Real$, by $\{\Re\le\delta\}$ we mean $\{ z\in\Complex : \Re(z)\le \delta\}$;
the definition of other similar symbols, such as $\{\Re>0\}$, is analogous. 

%For a real interval $[a,b]$, the interval $\{z\in\Complex : z= i y, y\in
%[a,b]\}$ is abbreviated by $[ia ,i b]$ or $i [a,b]$ as usual.

The closed disk centered 
at $z_0\in\Complex$ with radius $\vr>0$ is $D_\vr(z_0)$; when $z_0=0$, 
we simply write $D_\vr$. 
For any $\varrho>0$, the half circular arc in the left half-plane is denoted by
$C_\vr:=\{z\in\Complex : |z|=\vr\}\cap \{\Re\le 0\}$.

The boundary of a bounded set $A\subset\Complex$ is denoted by $\partial A$. 

We refer to the 
compact set 
\[
\sig:=\{ z\in\Complex :  |z e^{1-z}|\le 1 \}\cap D_1
\]
as the Szeg\H o region, with boundary, $\partial\sig$, known as the
Szeg\H o curve. 

We use $|z,w|:=|z-w|$ to denote the distance between points 
$z, w\in\Complex$. Similarly, the distance between two sets $A, B\subset \Complex$ is
$|A,B|:=\displaystyle \inf_{a\in A, b\in B} |a,b|$. When, for example, $A=\{z\}$ is a singleton, 
we simply write $|z,B|$ instead of $|\{z\},B|$.

%Finally, the variables $n, m$ and $k$ in this work always have non-negative 
%integer values (sometimes with further restrictions).

\subsection{Preliminaries}\label{preliminariessection}
For any $n\in\mathbb{N}^+$, let $\{\zeta_k(n) \}_{k=1}^n$ denote the zeros of 
$\Pp_n$. Szeg\H o showed in his original paper \cite{szego1924} that
$\zeta_k(n)$ cluster along the simple closed curve $\partial \sig$ as $n\to +\infty$.
Many later works refined and extended this result; for example
\cite{dieudonne, buckholtz1966, newmanrivlin1972, saffvarga1975, 
parabolic,
newmanrivlin1976, 
rocky, vargacarpenter2, yildirim, merkle, kappert, 
pritskervarga, vargacarpentercossin1, vargacarpentercossin2, peterwalker, monthlyzemyan, blehermallison, dynamical,
vargacarpenter, vargas}.

\begin{figure}
\begin{center}
\subfigure[\label{SzegosCurve}]{
\includegraphics[width=2.64in]{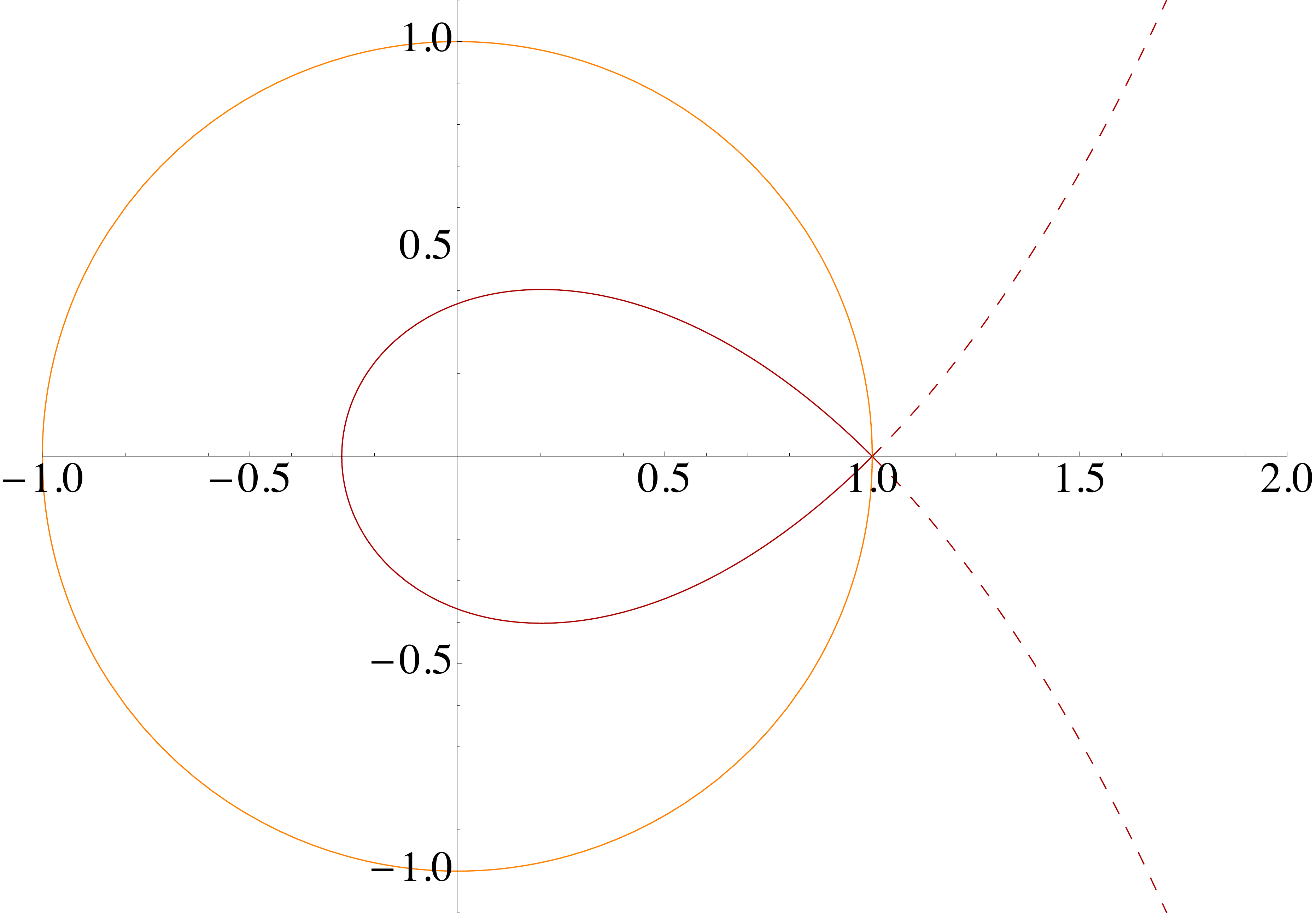}}
\subfigure[\label{SzegosContours}]{
\includegraphics[width=2.0in]{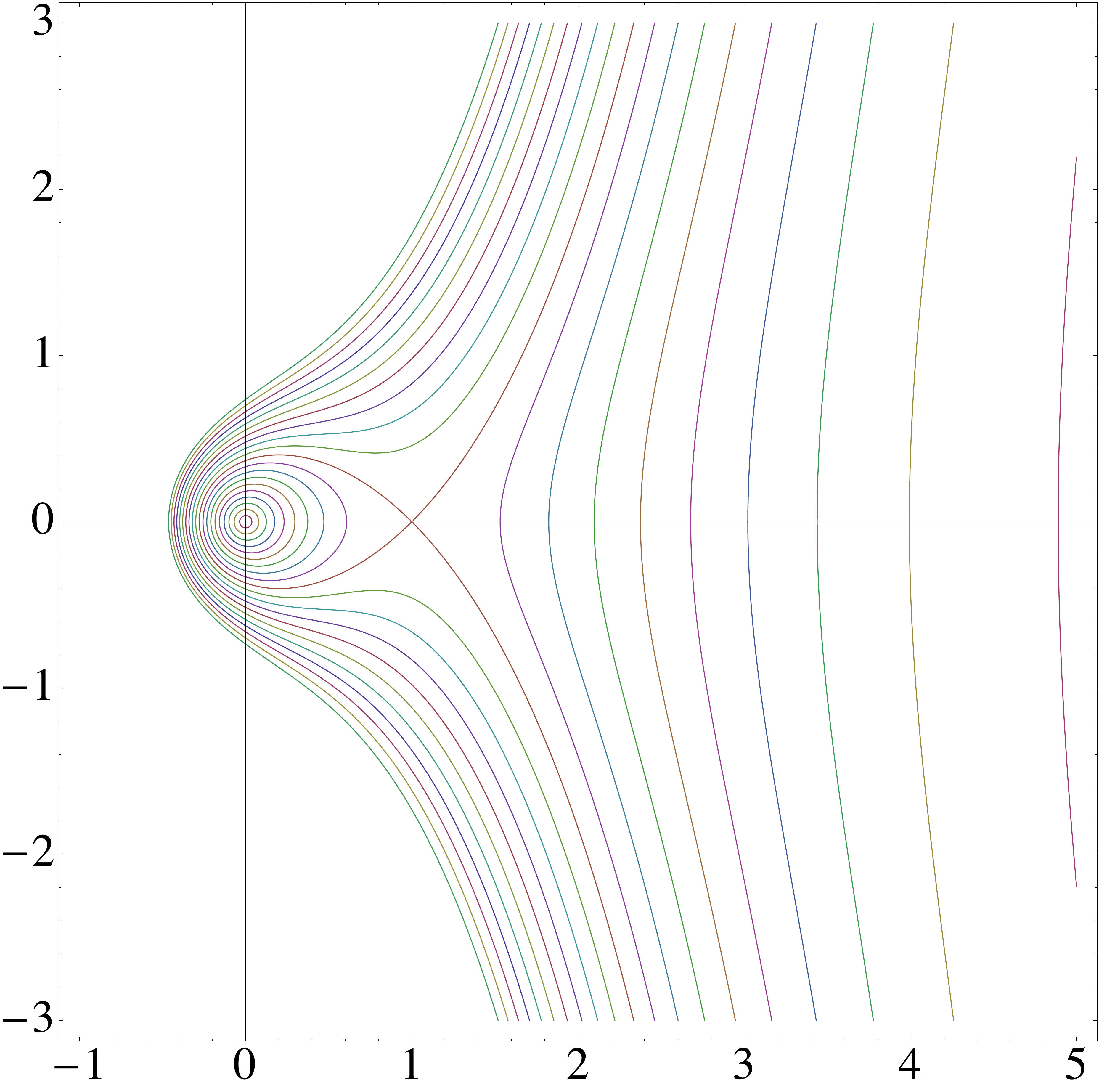}}
\caption{(a) The Szeg\H o curve $\partial \sig$ as the boundary of 
the Szeg\H o region in the unit disk; (b)
some contour lines of $z\mapsto |z e^{1-z}|$.}
\label{fig1}
\end{center}
\end{figure}

\begin{figure}
\begin{center}
\subfigure[\label{illustratingTheorem11}]{
\includegraphics[width=2.64in]{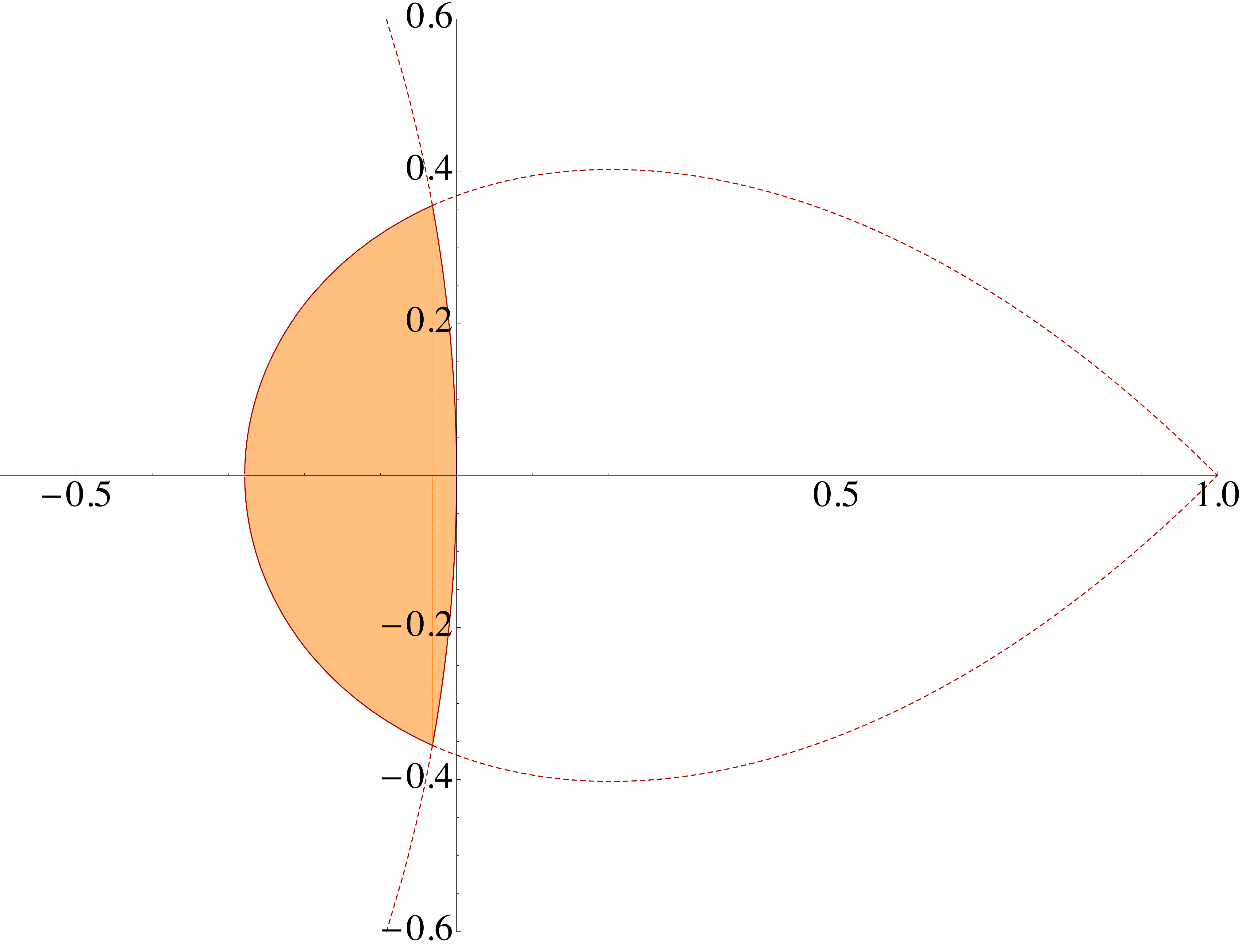}}
\subfigure[\label{illustratingTheorem13}]{
\includegraphics[width=2.77in]{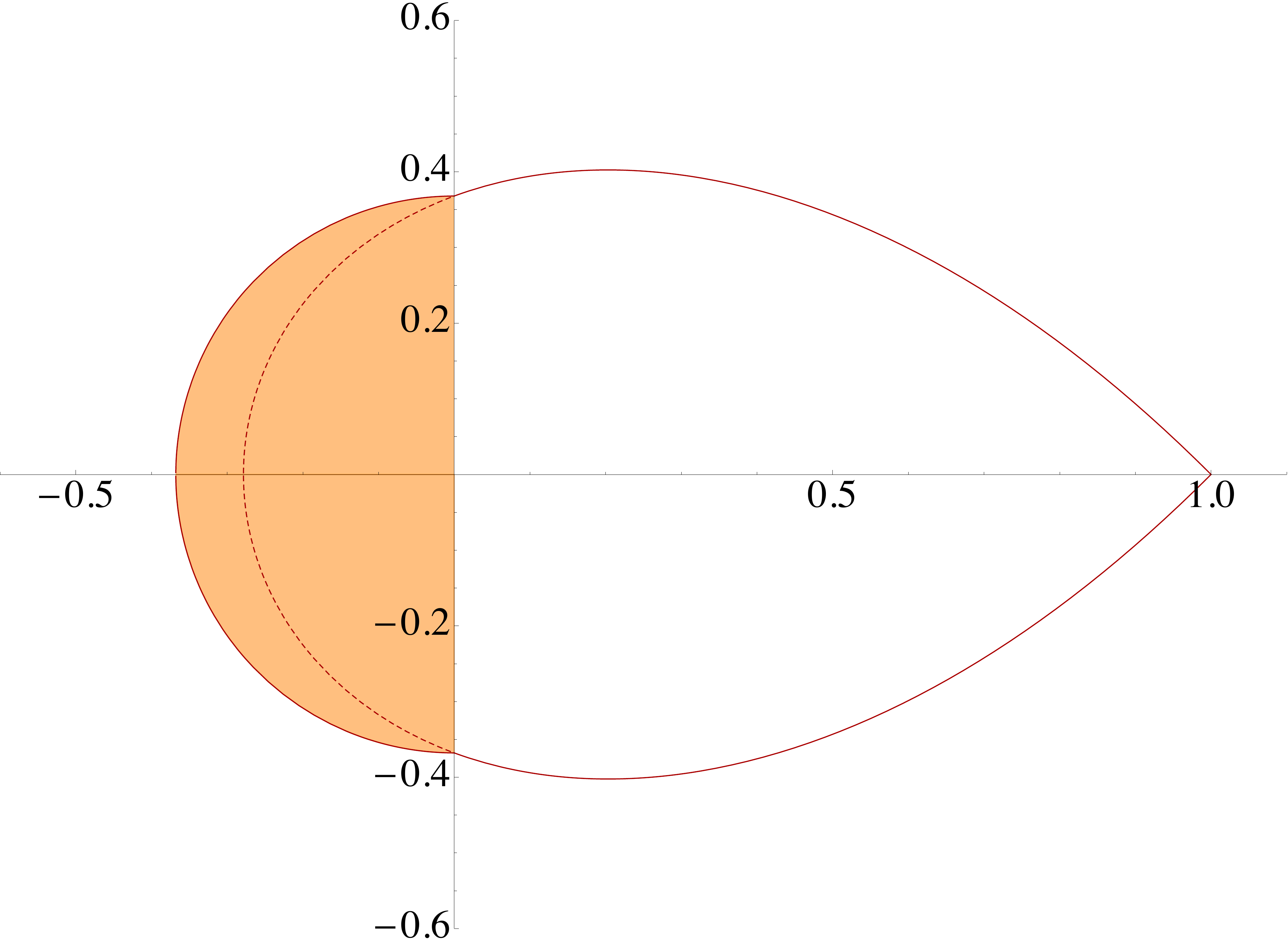}}
\caption{(a) The set  $D_2(-2)\cap \sig$ from Theorem \ref{shiftddiskintersectionAinclusion};
(b) $\Ss_\infty$ from Theorem \ref{limitofSn}.}
\label{fig1/2}
\end{center}
\end{figure}

%\begin{figure}
%\begin{center}
%\subfigure[\label{expreim}]{
%\includegraphics[width=0.35\textwidth]{n7sumoverexp_reim.png}}
%\subfigure[\label{expabs}]{
%\includegraphics[width=0.35\textwidth]{n7sumoverexp_abs.png}}
%\caption{The surfaces $\Re(\Pp_n(z)/e^{nz})$  and $\Im(\Pp_n(z)/e^{nz})$  
%with $z=x+i y$ are superimposed on each other in red and blue in \ref{expreim}. Here we chose
 % $n=7$, $-1\le x \le 3$, $|y|\le 5.5$, and function values from the interval $[-10,10]$.  
%The surface $|\Pp_n(z)/e^{nz}|$ with $z=x+i y$, $n=7$,  
%$-1\le x \le 3$, $|y|\le 1.5$ and function values from $[0,1.01]$ is shown in \ref{expabs}.}
%\end{center}
%\end{figure}

\begin{figure}
\begin{center}
\subfigure[\label{T12abs}]{
\includegraphics[width=0.7\textwidth]{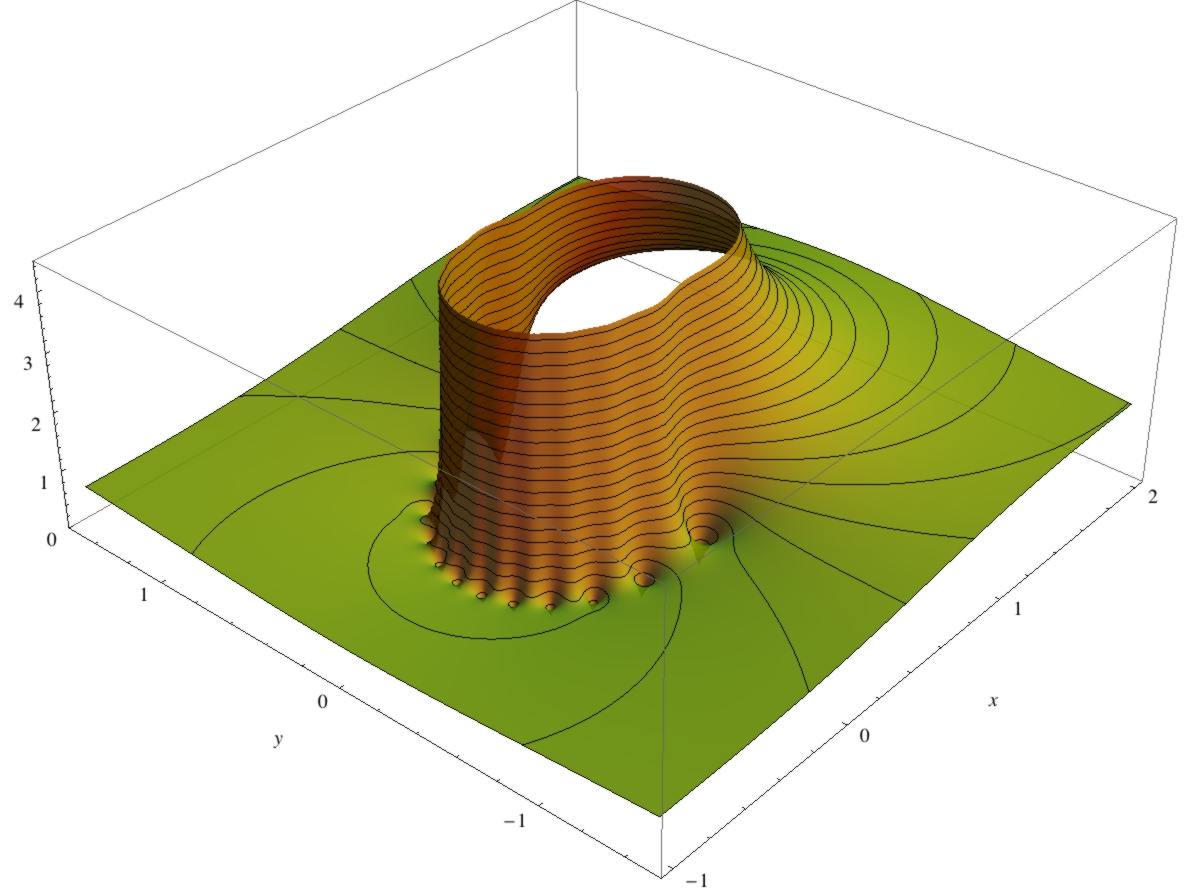}}
\subfigure[\label{T12reim}]{
\includegraphics[width=0.6\textwidth]{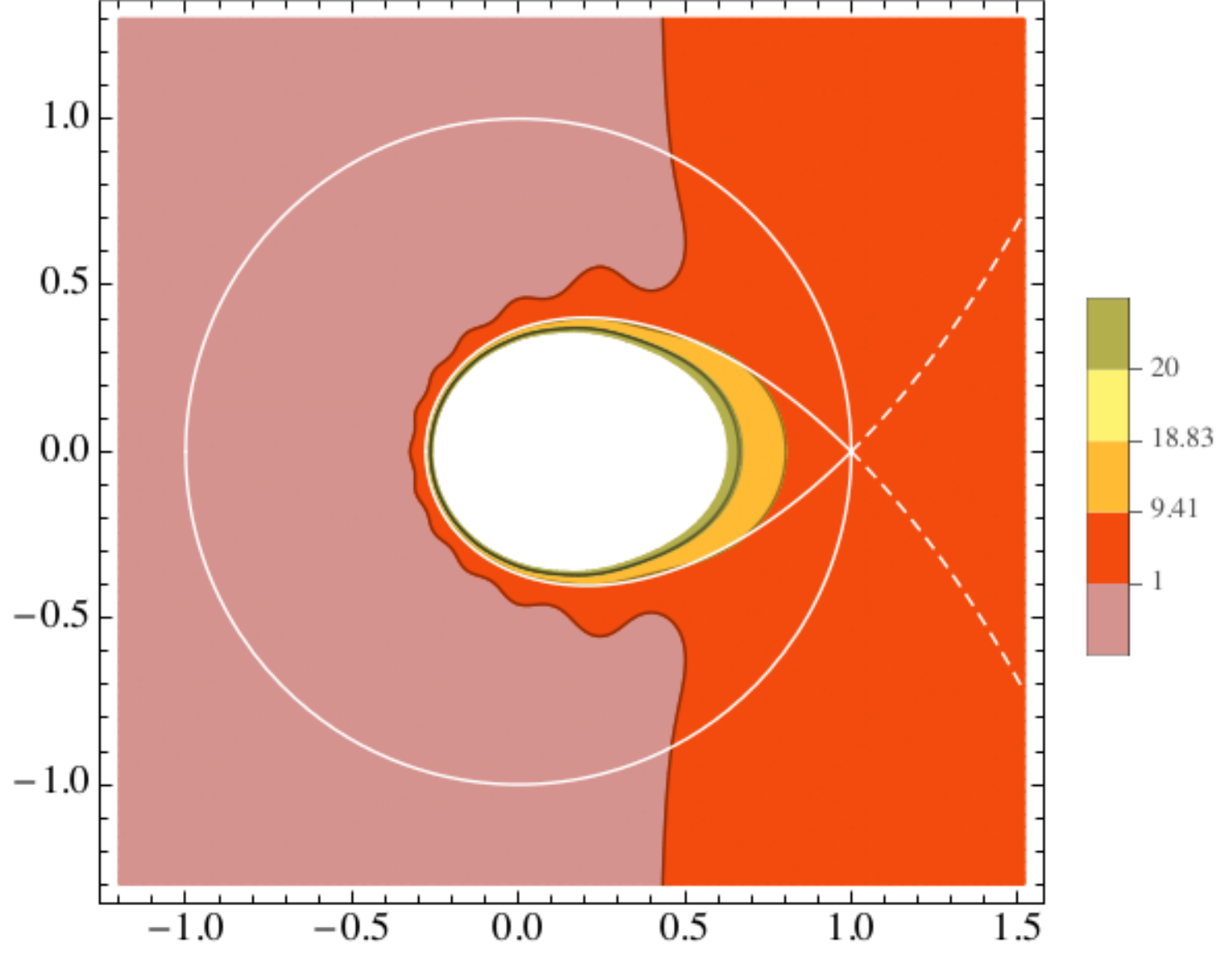}}
\caption{(a) Some level curves of $z\mapsto |T_{12}(z)|$ (defined by (\ref{Tndef}), and with
$z=x+i y$);
(b) $D_1$ and $\sig$ are displayed together with a contour plot of $|T_{12}|$ showing 
two specific contours at heights
$e\sqrt{n}=e\sqrt{12}\approx 9.4$ and $2e\sqrt{n}=2e\sqrt{12}\approx 18.8$.}
\label{fig2}
\end{center}
\end{figure}

Buckholtz gave some quantitative bounds on $\zeta_k(n)$ by
showing that all zeros of $\Pp_n$ lie outside the \szego region
and that the zeros asymptotically
approach the boundary of the \szego region \cite{buckholtz1966}.
Specifically, for each $n$ and $1\le k \le n$,
\begin{equation}\label{convergencerate}
|\zeta_k(n),\sig|\le\frac{2e}{\sqrt{n}}.
\end{equation}
The set $\Ss_n$ and the zeros of $\Pp_n$ are closely related; indeed,
Theorems \ref{shiftddiskintersectionAinclusion}, \ref{shiftddiskinclusion} and
\ref{limitofSn} rely on the results of Szeg\H o and Buckholtz regarding $\zeta_k(n)$.

Let us recall the main idea from \cite{buckholtz1966, buckholtz1963} 
about the proof of (\ref{convergencerate}), since we are going to 
apply the same reasoning in Sections \ref{discinclusionsection} and \ref{semidiscinclusionsection}, 
but in a more quantitative way. 
Following \cite{buckholtz1963}, we define the function
\begin{equation}\label{Tndef}
T_n(z):=\frac{n!}{(nz)^n} \Pp_n(z) \quad\quad (z\in\Complex\setminus\{0\}),
\end{equation}
which satisfies the differential equation
\begin{equation}\label{TnODE}
T_n(z)=\frac{z}{z-1}\left(1+\frac{T'_n(z)}{n}\right)\quad\quad (0\ne z\ne 1)
\end{equation}
and the bounds
\begin{equation}\label{Tnfirstestimate}
\forall z\in(\Complex\setminus\sig)\cup  \partial\sig: \quad\quad\left|T_n(z)\right|\le 2 e\sqrt{n},
\end{equation}
\begin{equation}\label{Tnsecondestimate}
\forall z \in \Complex,\,  |z|\ge 1 : \quad\quad\left|T_n(z)\right|\le  e\sqrt{n};
\end{equation}
see Figure \ref{fig2}. Recall the standard Cauchy inequality for the derivative:\\

\textit{Let $f$ be analytic for all $z \in D_\delta(z_0)$ and let 
$M = \max_{z\in D_\delta(z_0)} |f(z)|$.
Then $\left|f'(z_0)\right|\le \frac{M}{\delta}$.} \\

\noindent In \cite{buckholtz1966}, this inequality is used with (\ref{Tnfirstestimate}) 
to estimate $|T'_n|$ from above on $\{ z\in\Complex : |z,\sig|> 2e/\sqrt{n} \}$, 
then with (\ref{TnODE}) to show $|T_n|>0$, and hence $|\Pp_n|>0$, on
$\{ z\in\Complex : |z,\sig|> 2e/\sqrt{n} \}$.

Finally, we also need the following explicit estimate, see, for example, \cite{rocky}.

\begin{lem}\label{Pn roots D1}
For any $n\ge 1$, all roots of $\Pp_n$ are located in $D_1$.
\end{lem}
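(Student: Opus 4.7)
The plan is to reduce the statement to the analogous bound for the unscaled partial sum $p_n(w):=\sum_{k=0}^n w^k/k!$. Since $\Pp_n(z)=p_n(nz)$, every root $z$ of $\Pp_n$ corresponds to a root $w=nz$ of $p_n$, and the desired inclusion $z\in D_1$ becomes $|w|\le n$.

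For the latter I would work with the elementary identity
\[
(n-w)\,p_n(w)=n-\frac{w^{n+1}}{n!}+\sum_{k=1}^{n-1}\frac{(n-k)\,w^k}{k!},
\]
obtained by expanding $n\,p_n(w)-w\,p_n(w)$ and re-indexing; crucially, every coefficient on the right-hand side other than that of $w^{n+1}$ is non-negative. If $w$ is a zero of $p_n$, the left-hand side vanishes, so after rearrangement and the triangle inequality we get
\[
\frac{|w|^{n+1}}{n!}\le n+\sum_{k=1}^{n-1}\frac{(n-k)\,|w|^k}{k!}.
\]
Substituting the real number $r:=|w|$ into the same identity, however, yields
\[
n+\sum_{k=1}^{n-1}\frac{(n-k)\,r^k}{k!}-\frac{r^{n+1}}{n!}=(n-r)\,p_n(r),
\]
and since $p_n(r)>0$ for every $r\ge 0$, the right-hand side is strictly negative whenever $r>n$. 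This contradicts the previous inequality unless $r=|w|\le n$, which gives $|z|\le 1$.

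A classical alternative is the Enestr\"om--Kakeya theorem, which, applied to $p_n$ (whose consecutive coefficient ratios are $a_k/a_{k+1}=k+1$, attaining maximum $n$), delivers the same bound $|w|\le n$ in one line; the argument above is essentially its specialization to this family of polynomials. I do not expect a serious obstacle: the sole computation is the algebraic identity displayed above, after which the positivity of $p_n$ on $[0,\infty)$ closes the argument immediately. The same strategy would in fact yield the sharper annular bound $1\le|w|\le n$ by multiplying by $(w-1)$ instead, though only the outer inequality is needed here.
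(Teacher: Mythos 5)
Your main argument is correct, and it is slightly different in presentation from the paper's: you unpack the standard Eneström--Kakeya mechanism explicitly, multiplying $p_n(w)$ by $(n-w)$, observing that all coefficients on the right except that of $w^{n+1}$ are non-negative, applying the triangle inequality, and then closing with the positivity of $p_n$ on $[0,\infty)$. The identity
\[
(n-w)p_n(w)=n-\frac{w^{n+1}}{n!}+\sum_{k=1}^{n-1}\frac{(n-k)w^k}{k!}
\]
checks out (after the empty sum convention for $n=1$), and the final deduction $0\le (n-r)p_n(r)$ from $r=|w|$ does force $r\le n$. The paper, by contrast, works directly with the scaled polynomial $\Pp_n(z)=\sum_{k=0}^n \frac{n^k}{k!}z^k$, verifies that its coefficients are nondecreasing, and cites the Eneström--Kakeya theorem as a black box; this is one line but requires the reader to know that theorem's statement. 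Your version is more self-contained and, as you correctly note, is essentially the Eneström--Kakeya proof specialized to this family. You also mention the Eneström--Kakeya citation as an alternative, which is precisely the route the paper takes. Both approaches are valid; yours trades a citation for a short computation.
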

\begin{proof} Since for $0\le m\le n-1$ we have $0\le \frac{n^m}{m!}\le\frac{n^{m+1}}{(m+1)!}$, 
the Enestr\"om--Kakeya theorem \cite{gardnergovil} guarantees $|\zeta_k(n)|\le 1$ ($1\le k\le n$). 
\end{proof}

%\section{The smallest enclosing disk for $\Ss_n$ and 
%$\Ss_n$ in the left half-plane for $1\le n\le 20$\label{sec:disks20}}

%REWRITE. So far we have not been able to improve the above bound on the radius of the disk 
%containing $\Ss_p$
%for \textit{general} $p$ values (or find similar results in the literature). Since this enclosing 
%disk with radius $2p$ would yield
%too weak bounds on the maximal internal amplification factors for small $p$ values used in practical
%computations, 

%It would be interesting to have some theoretical explanation on  the shape and size of the region
%$\Ss_p$. A possible starting point could be the integral representation of ${\mathcal{T}}_p$, for example,
%through the \textit{incomplete gamma function}
%$
%{\mathcal{T}}_p(z)=\frac{e^z}{p!} \int_z^{\infty }  t^p  e^{-t}\, dt.
%$

\section{Bounding $\Ss_n$ by a disk from outside}\label{discinclusionsection}
In this section, we give an asymptotically optimal bound on the smallest disk
containing $\Ss_n$, as well as uniform bounds valid for all $n$.
First we prove the following inclusion for all positive $n$.

\begin{lem}\label{extrapollemma4.3shape}
 We have  $\Ss_n\subset D_2$ for any $n\ge 1$.
\end{lem}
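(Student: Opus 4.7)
The plan is to combine the previously established fact (Lemma \ref{Pn roots D1}) that all zeros of $\Pp_n$ lie in $D_1$ with a straightforward factorization estimate.

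First, I would note that $\Pp_n(z)=\sum_{k=0}^n \frac{(nz)^k}{k!}$ is a polynomial of degree $n$ with leading coefficient $\frac{n^n}{n!}$, so if $\zeta_1(n),\ldots,\zeta_n(n)$ denote its zeros, then
\[
\Pp_n(z)=\frac{n^n}{n!}\prod_{k=1}^n (z-\zeta_k(n)).
\]
By Lemma \ref{Pn roots D1}, $|\zeta_k(n)|\le 1$ for every $k$.

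Next, I would take any $z\in\Complex$ with $|z|\ge 2$ and apply the reverse triangle inequality to obtain $|z-\zeta_k(n)|\ge |z|-|\zeta_k(n)|\ge |z|-1\ge 1$ for each $k$. Consequently,
\[
|\Pp_n(z)|\ge \frac{n^n}{n!}\prod_{k=1}^n\bigl(|z|-1\bigr)\ge \frac{n^n}{n!}.
\]
The last step is the trivial observation that $\frac{n^n}{n!}=\prod_{k=1}^n \frac{n}{k}\ge 1$, because every factor satisfies $n/k\ge 1$ for $1\le k\le n$. Hence $|\Pp_n(z)|\ge 1$ whenever $|z|\ge 2$, which is equivalent to the complement inclusion $\Complex\setminus D_2\subset \Complex\setminus \Ss_n$, i.e., $\Ss_n\subset D_2$.

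There is no real obstacle in this argument; the only point that requires the prior work in the paper is the containment of all zeros of $\Pp_n$ in the closed unit disk, which is exactly Lemma \ref{Pn roots D1}. Note that neither the Szeg\H o-type bounds (\ref{Tnfirstestimate})--(\ref{Tnsecondestimate}) nor the differential equation (\ref{TnODE}) are needed here; those will presumably be required for the sharper inclusion $\Ss_n\subset D_{1.6}$ (and the asymptotically optimal $D_{1+\varepsilon}$) announced for Section \ref{discinclusionsection}, where the crude factor $|z|-1$ must be replaced by a tighter estimate exploiting the clustering of the zeros near the Szeg\H o curve.
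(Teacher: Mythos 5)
Your proof is correct and follows essentially the same route as the paper: both use the product representation $\Pp_n(z)=\frac{n^n}{n!}\prod_{k=1}^n(z-\zeta_k(n))$ together with Lemma \ref{Pn roots D1} ($|\zeta_k(n)|\le 1$) to bound each factor $|z-\zeta_k(n)|$ from below when $|z|$ exceeds $2$. The only cosmetic difference is that you justify $\frac{n^n}{n!}\ge 1$ directly via $\prod_{k=1}^n\frac{n}{k}\ge 1$, whereas the paper absorbs the leading coefficient by noting $\frac{n^n}{n!}=\prod_{k=1}^n|\zeta_k(n)|^{-1}$ from the product formula evaluated at $z=0$; and you should take $|z|>2$ (rather than $|z|\ge 2$) to get the strict inequality $|\Pp_n(z)|>1$ needed to conclude $z\notin\Ss_n$, as the paper does.
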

\begin{proof}
From the product representation
$
\Pp_n(z)=\frac{n^n}{n!}{\displaystyle \prod_{k=1}^n }(z-\zeta_k(n))
$
at $z=0$ we obtain $\frac{n^n}{n!}={\displaystyle \prod_{k=1}^n }\frac{1}{-\zeta_k(n)}$. Hence
for any $z\in\Complex$ we have 
$\left|\Pp_n(z)\right|=
\frac{n^n}{n!}{\displaystyle \prod_{k=1}^n }\left|z-\zeta_k(n)\right|=
{\displaystyle \prod_{k=1}^n }\frac{\left |z-\zeta_k(n)\right|}{|\zeta_k(n)|}\ge
{\displaystyle \prod_{k=1}^n }\left |z-\zeta_k(n)\right|$, by
Lemma \ref{Pn roots D1}. 
Therefore, if $z\in\Complex$ is chosen such that $|z|>2$, then, due to  $|\zeta_k(n)|\le 1$ again, 
$|z-\zeta_k(n)|>1$, so 
$\left|\Pp_n(z)\right|> {\displaystyle \prod_{k=1}^n }1$, and thus $z\notin \Ss_n$.
\end{proof}

Next we refine the above result for $n$ large enough. 

\begin{thm}\label{2eoverdeltasndn+deltatheorem}
For each $\varepsilon\in (0,1)$ and $n \ge n_0(\varepsilon):=\left(\frac{1.0085\, e}{\varepsilon}\right)^2$, we have 
$\Ss_n\subset D_{1+\varepsilon}$.
\end{thm}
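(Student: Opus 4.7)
The plan is to mimic Buckholtz's reasoning behind (\ref{convergencerate}), but to produce a quantitative lower bound $|\Pp_n(z)|>1$ (rather than merely $|\Pp_n(z)|>0$) on the complement of $D_{1+\varepsilon}$. Concretely, I aim to show: for every $z$ with $|z|\ge 1+\varepsilon$ and every $n\ge n_0(\varepsilon)$, one has $|\Pp_n(z)|>1$.

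Fix such a $z$. The closed disk $D_\varepsilon(z)$ lies inside $\{w\in\Complex:|w|\ge 1\}$, so the Buckholtz estimate (\ref{Tnsecondestimate}) gives $|T_n(w)|\le e\sqrt{n}$ throughout $D_\varepsilon(z)$. Because $T_n$ is holomorphic away from the origin, Cauchy's inequality yields
\[
|T_n'(z)|\le \frac{e\sqrt{n}}{\varepsilon}.
\]
Substituting into the differential equation (\ref{TnODE}) — applicable because $|z|>1$ forces $z\ne 0,1$ — and using $|z-1|\le |z|+1$ together with the monotonicity of $r\mapsto r/(r+1)$, I obtain
\[
|T_n(z)|\ \ge\ \frac{|z|}{|z|+1}\left(1-\frac{|T_n'(z)|}{n}\right)\ \ge\ \frac{1+\varepsilon}{2+\varepsilon}\left(1-\frac{e}{\varepsilon\sqrt{n}}\right).
\]
For $n\ge n_0(\varepsilon)=(1.0085\,e/\varepsilon)^2$ the last parenthesis is at least $1-1/1.0085=0.0085/1.0085>0$, so the bound is strictly positive.

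Combining the previous display with the identity $|\Pp_n(z)|=(n|z|)^n|T_n(z)|/n!$, and noting that both $(n|z|)^n/n!$ and the lower bound just obtained are non-decreasing in $|z|$ (applying Cauchy's inequality on a disk of radius $\varepsilon$ at every such $z$), the extremal case occurs at $|z|=1+\varepsilon$, where it suffices to verify
\[
\frac{(n(1+\varepsilon))^n}{n!}\cdot\frac{1+\varepsilon}{2+\varepsilon}\cdot\frac{0.0085}{1.0085}\ >\ 1
\]
for all $\varepsilon\in(0,1)$ and $n\ge n_0(\varepsilon)$. Plugging in Stirling's inequality $n!\le\sqrt{2\pi n}\,(n/e)^n e^{1/(12n)}$, the left-hand side is bounded below by a multiple of $(e(1+\varepsilon))^n/\sqrt{n}$, and since $e(1+\varepsilon)>e>1$ this factor grows geometrically in $n$.

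The hard part will be pushing this verification through the full parameter range. When $\varepsilon$ is small, $n_0(\varepsilon)$ is already very large and the exponential factor $(e(1+\varepsilon))^n$ easily absorbs every other constant. The delicate regime is $\varepsilon$ close to $1$, where $n_0(\varepsilon)$ drops to roughly $7.52$ and Stirling's estimate has little slack; here I expect to exploit monotonicity in $n$ (for fixed $\varepsilon$) to reduce the check to $n=\lceil n_0(\varepsilon)\rceil$, and then complete the argument by a direct computation. The specific constant $1.0085$ is presumably calibrated precisely so that the asymptotic constraint $\varepsilon\sqrt{n}>e$ and the small-$n$ boundary check at $\varepsilon$ near $1$ succeed simultaneously.
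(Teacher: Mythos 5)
Your argument is essentially the paper's own proof: bound $|T_n|$ by $e\sqrt{n}$ on $\{|w|\ge 1\}$ via (\ref{Tnsecondestimate}), apply the Cauchy inequality on $D_\varepsilon(z)$ to control $|T_n'|$, feed this into (\ref{TnODE}) with the elementary bound $|w/(w-1)|\ge |w|/(|w|+1)$ (the content of (\ref{zz-1inf})), and finish with the Stirling-type estimate $n^n/n!\ge e^{n}/(e\sqrt{n})$. The final verification you defer does close, and in fact exactly as you anticipate: the paper reduces by monotonicity in $n$ to $n=n_0(\varepsilon)$ and then by monotonicity of $\varepsilon\mapsto\varepsilon\,e^{n_0(\varepsilon)}$ to $\varepsilon=1$, obtaining a margin of about $1.03$ (your version, which retains the factor $(1+\varepsilon)^n$ instead of discarding $|z|^n\ge 1$, succeeds with even more room).
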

\begin{proof} Let us fix $\varepsilon\in (0,1)$. Then for any $n\ge 1$ and $|z|\ge 1$ we have
$|T_n(z)|\le e\sqrt{n}$ by (\ref{Tnsecondestimate}). So, due to the 
Cauchy inequality for the derivative, $|T'_n(z)|\le {e\sqrt{n}}/{\varepsilon}$ for 
$|z|\ge 1+\varepsilon$. Now for any $n>\left({e}/{\varepsilon}\right)^2$ and
$|z|\ge 1+\varepsilon$, we get ${|T'_n(z)|}/{n}\le {e}/(\varepsilon\sqrt{n})<1$, so by
(\ref{TnODE}) and by (\ref{zz-1inf}) in Lemma \ref{infsupsuplemma} with $\sigma:=1$ we have 
for such $n$ and $z$ values that
\[
|T_n(z)|\ge 
\left(\inf\left\{\left| \frac{w}{w-1}\right| : w\in\Complex, |w|\ge 1+\varepsilon\right\}\right)
\left( 1-\frac{|T'_n(z)|}{n}\right)\ge
\]
\begin{equation}\label{Theorem31Tnlowerestimate}
\left(\inf\left\{\left| \frac{w}{w-1}\right| : w\in\Complex, |w|\ge 1\right\}\right)
\left( 1-\frac{e}{\varepsilon\sqrt{n}}\right)=
\frac{1}{2}\left( 1-\frac{e}{\varepsilon\sqrt{n}}\right).
\end{equation}
But $\frac{1}{2}\left( 1-\frac{e}{\varepsilon\sqrt{n}}\right)\ge 17/4034$ for 
$n \ge n_0(\varepsilon)$, 
hence for any such $n$ and $|z|\ge 1+\varepsilon$ we obtain
$
|\Pp_n(z)|=\frac{n^n |z|^n}{n!}|T_n(z)|\ge  17/4034\cdot \frac{n^n}{n!}.
$
Now, by Lemma \ref{factorialestimatelemma}, $\frac{n^n}{n!}\ge\frac{e^n}{e\sqrt{n}}$, and 
the function $[1,+\infty)\ni n\mapsto \frac{e^n}{e\sqrt{n}}$ is increasing. So for 
$\varepsilon\in (0,1)$, $n \ge n_0(\varepsilon)$ and 
$|z|\ge 1+\varepsilon$ we obtain
\[
|\Pp_n(z)|\ge  17/4034\cdot \frac{e^{n_0(\varepsilon)}}{e\sqrt{n_0(\varepsilon)}} =
\frac{17000}{4068289}\cdot\varepsilon\cdot\frac{e^{n_0(\varepsilon)}}{e^2}\ge
\frac{17000}{4068289}\cdot 1\cdot\frac{e^{(1.0085\, e/1)^2}}{e^2}>1.03>1.
\]
\end{proof}

\begin{rem}\label{optimalityremarkofTh31}
Theorem \ref{2eoverdeltasndn+deltatheorem} 
is asymptotically optimal in the sense
that the constant $1$ in $D_{1+\varepsilon}$ cannot be replaced by a smaller positive number.
This can be seen by noting that $1\in \partial\sig\subset \Ss_\infty$ and using
Theorem \ref{limitofSn}. On the other hand, at least for the $3\le n\le 20$ values,
we have $\Ss_n\subset D_1$. It would be interesting to investigate whether 
$\Ss_n\subset D_1$ holds for all $n\ge 21$ as well.

\end{rem}

Finally, we use Theorem \ref{2eoverdeltasndn+deltatheorem} to improve the bound in
Lemma \ref{extrapollemma4.3shape} for $n\ge 2$.  In order to do so,
we have determined the quantities $\max_{z\in\Ss_n} |z|$ as exact algebraic
numbers for the first few $n$ values by transforming the problem from $\Complex$ to $\Real^2$ 
and applying \textit{Mathematica}'s  
\texttt{Maximize} command with objective function $\sqrt{x^2+y^2}$.  The resulting
values are presented in Table \ref{farthestpointTaylorpoly}. For the sake of brevity, instead of 
listing any exact algebraic numbers (in the $n=15$ case, for example, the algebraic degree of 
the exact maximum is 420, while in the $n=20$ case, the degree is 760), their
values are rounded up, so Table \ref{farthestpointTaylorpoly}
provides strict upper bounds on $\max_{z\in\Ss_n} |z|$. 
\begin{table}[h]
\begin{center}
    \begin{tabular}{c|c||c|c}
    $n$ & $\max_{z\in\Ss_n} |z|$ & $n$ & $\max_{z\in\Ss_n} |z|$ \\ \hline
    1 & 2 & 11 & 0.664 \\
    2 & $\frac{1}{2}\sqrt{2 \left(1+\sqrt{2}\right)}\approx 1.099$ & 12 & 0.670 \\
    3 & 0.847 & 13 & 0.676 \\
    4 & 0.741 & 14 & 0.682 \\
    5 & 0.690 & 15 & 0.687 \\
    6 & 0.665 & 16 & 0.692 \\
    7 & 0.6546 & 17 & 0.697 \\
    8 & 0.6523 & 18 & 0.702 \\
    9 & 0.6542 & 19 & 0.707 \\
    10 & 0.659 & 20 & 0.711 \\ \hline
    \end{tabular}
\caption{For $n\ge 3$, the exact values of $\max_{z\in\Ss_n} |z|$ (rounded up). In the given
range $1\le n \le 20$, the minimum 
of $\max_{z\in\Ss_n} |z|$ is attained for $n=8$.\label{farthestpointTaylorpoly}}
\end{center}
\end{table}
These values can be computed quickly:
even the $n=20$ case was obtained in just 40 seconds (on a laptop, as of 2013). 
We remark that the maximum is attained within
$\{\Re<0\}$ for $1\le n\le 4$, and within $\{\Re>0\}$ for $5\le n\le 20$.

\begin{cor}\label{D1.6corollary}
For each  $n \ge 2$, we have 
$\Ss_n\subset D_{1.6}$.
\end{cor}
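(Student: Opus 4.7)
The plan is to combine the explicit table of maxima for $1 \le n \le 20$ with the asymptotic inclusion of Theorem \ref{2eoverdeltasndn+deltatheorem} specialized to a convenient $\varepsilon$. Specifically, I would choose $\varepsilon = 0.6$, so that the target disk in Theorem \ref{2eoverdeltasndn+deltatheorem} becomes exactly $D_{1.6}$, and then check that the threshold $n_0(0.6)$ falls at or below the range covered by Table \ref{farthestpointTaylorpoly}.

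First I would compute $n_0(0.6) = \left(\frac{1.0085\,e}{0.6}\right)^2$, which evaluates to a number slightly below $21$ (roughly $20.88$). Hence Theorem \ref{2eoverdeltasndn+deltatheorem} applied with $\varepsilon = 0.6$ gives $\Ss_n \subset D_{1+0.6} = D_{1.6}$ for every $n \ge 21$. This handles the tail of the statement.

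For the remaining range $2 \le n \le 20$, I would invoke Table \ref{farthestpointTaylorpoly} directly: each listed upper bound for $\max_{z \in \Ss_n} |z|$ is strictly less than $1.1$, hence certainly less than $1.6$. Consequently $\Ss_n \subset D_{1.6}$ for these $n$ as well, and combining the two cases gives the corollary.

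There is essentially no obstacle here, since the heavy lifting was done in Theorem \ref{2eoverdeltasndn+deltatheorem} and in the computation producing Table \ref{farthestpointTaylorpoly}. The only point requiring a moment of care is verifying the arithmetic inequality $n_0(0.6) \le 21$, so that the transition between the finite check and the asymptotic inclusion closes up without leaving any integer $n \ge 2$ uncovered; the choice $\varepsilon = 0.6$ was in fact made precisely because Table \ref{farthestpointTaylorpoly} stops at $n = 20$ and $n_0(0.6)$ just barely fits under this threshold.
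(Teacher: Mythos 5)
Your proposal matches the paper's proof exactly: verify $2 \le n \le 20$ from Table \ref{farthestpointTaylorpoly}, then apply Theorem \ref{2eoverdeltasndn+deltatheorem} with $\varepsilon = 0.6$ for $n \ge 21$, noting $n_0(0.6) = (1.0085\,e/0.6)^2 \approx 20.88 < 21$. Nothing to add.
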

\begin{proof} For $2\le n \le 20$, the result follows from the computations given in Table 
 \ref{farthestpointTaylorpoly}.  For $n\ge 21$, we can apply Theorem \ref{2eoverdeltasndn+deltatheorem}
with $\varepsilon:=0.6$ because $21>n_0(\varepsilon)> 20.87$.
%>\left(\frac{e}{0.6}\right)^2$ and $|z|\ge 1.6$,
%(\ref{Theorem31Tnlowerestimate}) with $\varepsilon:=0.6$ implies that $|T_n(z)|\ge 
%\frac{1}{5}\left( 1-\frac{e}{0.6\sqrt{n}}\right)$, from which, again by Lemma %\ref{factorialestimatelemma},
%$|\Pp_n(z)|\ge \frac{e^n}{e\sqrt{n}}\cdot\frac{1}{5}\left( 1-\frac{e}{0.6\sqrt{n}}\right)$ follows. We see %via
%differentiation that this right-hand 
%side is strictly increasing for $n\ge 21$, and its value at $n=21$ is $>1$.
\end{proof}

\section{Bounding $\Ss_n$ in the left half-plane by a semi-disk from outside}
\label{semidiscinclusionsection}

Figures \ref{firstfewscaledTaylorStabRegions} and \ref{illustratingTheorem13} suggest 
that the size of the
bounding disks in Section \ref{discinclusionsection} is dictated
by the locations of zeros of $\Pp_n$ in the right half-plane.
In numerical analysis, we are typically interested in stability behavior only
in the left half-plane.  For $n>4$, the portion of $\Ss_n$ in the left
half-plane is apparently contained in a smaller disk.  Theorem 
\ref{Thm4.1asy} and Corollary \ref{2+deltacorollary} give an asymptotic result, while Theorem \ref{Thm4.2concr} is valid for all 
$n\ge 3$.

\begin{thm}\label{Thm4.1asy}
For any $\varepsilon>0$ there exists $n_0(\varepsilon)\in\mathbb{N}^+$ such that
$n\ge n_0(\varepsilon)$ implies 
\[\displaystyle \Ss_n\cap \{\Re\le 0\}\subset D_{1/e+\varepsilon}\cap \{\Re\le 0\}.\]
\end{thm}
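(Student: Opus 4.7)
The plan is to prove the contrapositive: for every $z\in\Complex$ with $\Re(z)\le 0$ and $|z|\ge 1/e+\varepsilon$, I will show that $|\Pp_n(z)|>1$ once $n$ is sufficiently large. The strategy imitates the Buckholtz-type argument of Theorem \ref{2eoverdeltasndn+deltatheorem}, but exploits the sharper geometric fact that, in the closed left half-plane, the Szeg\H o region $\sig$ is confined to $D_{1/e}$. First I would invoke Corollary \ref{D1.6corollary} to restrict attention to $|z|\le 1.6$, reducing the analysis to the compact set $K_\varepsilon:=\{z\in\Complex:\Re(z)\le 0,\ 1/e+\varepsilon\le|z|\le 1.6\}$.

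The key geometric step is to observe that if $w\in\sig$ satisfies $\Re(w)\le 0$, then $|w|\,e^{1-\Re(w)}\le 1$ forces $|w|\le e^{\Re(w)-1}\le 1/e$. Hence $\sig\cap\{\Re\le 0\}\subset D_{1/e}$, so $K_\varepsilon\cap\sig=\emptyset$, and compactness of $K_\varepsilon$ together with closedness of $\sig$ yields a strictly positive constant $d_\varepsilon:=|K_\varepsilon,\sig|$ depending only on $\varepsilon$. Thus for every $z\in K_\varepsilon$ the closed disk $D_{d_\varepsilon/2}(z)$ is disjoint from $\sig$, so (\ref{Tnfirstestimate}) gives $|T_n|\le 2e\sqrt{n}$ throughout it, and the Cauchy bound for the derivative yields $|T_n'(z)|\le 4e\sqrt{n}/d_\varepsilon$. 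Plugging this into (\ref{TnODE}) together with $c_\varepsilon:=\min_{z\in K_\varepsilon}|z/(z-1)|>0$ (attained since $K_\varepsilon$ is compact and avoids $0$ and $1$) produces $|T_n(z)|\ge c_\varepsilon\bigl(1-4e/(d_\varepsilon\sqrt{n})\bigr)\ge c_\varepsilon/2$ for every sufficiently large $n$.

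The last step is to transfer this bound back to $\Pp_n$ via the identity $|\Pp_n(z)|=(n|z|)^n/n!\cdot|T_n(z)|$. Bounding $n^n/n!$ from below by $e^n/(e\sqrt{n})$ through Lemma \ref{factorialestimatelemma} and using $|z|\ge 1/e+\varepsilon$ gives
\[
|\Pp_n(z)|\ge \frac{c_\varepsilon}{2}\cdot\frac{(1+\varepsilon e)^n}{e\sqrt{n}},
\]
which grows exponentially in $n$ and eventually exceeds $1$, yielding the claimed $n_0(\varepsilon)$. The main obstacle, as I see it, will be the clean execution of the geometric step: one must confirm that $\sig$ really does not protrude past $D_{1/e}$ into the left half-plane and that $d_\varepsilon$ and $c_\varepsilon$ are quantitatively controlled uniformly on $K_\varepsilon$. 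Once that is in place, the remainder is a uniform version of the reasoning behind Theorem \ref{2eoverdeltasndn+deltatheorem}. Incidentally, the radius $1/e$ is asymptotically sharp, since $-1/e\in\partial D_{1/e}\cap\Ss_\infty$ by Theorem \ref{limitofSn}.
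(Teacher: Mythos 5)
Your argument is correct, and its skeleton is the same Buckholtz-type bootstrap the paper uses: bound $|T_n|$ off $\sig$ by (\ref{Tnfirstestimate}), pass to $|T_n'|$ by the Cauchy inequality, feed that into (\ref{TnODE}) to get a lower bound on $|T_n|$, and convert to $|\Pp_n|$ via Lemma \ref{factorialestimatelemma}. Where you genuinely diverge is in how you secure the two positive constants that drive the estimate. The paper obtains the distance to $\sig$ from the quantitative Lemma \ref{lowerboundCrhosig}, whose proof is a fairly involved tangent-line computation with the Lambert $W$ function, and it obtains the lower bound on $|w/(w-1)|$ from the explicit infimum $1/(1+e)$ in Lemma \ref{infsupsuplemma}; because the bound $|C_\vr,\sig|\ge(\vr e-1)/\sqrt{e^2+1}$ is uniform in $\vr\ge 1/e+\varepsilon$, the paper never needs to bound $|z|$ from above. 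You instead make the elementary (and correct) observation that $|w e^{1-w}|=|w|e^{1-\Re(w)}\le 1$ with $\Re(w)\le 0$ forces $|w|\le 1/e$, so $\sig\cap\{\Re\le 0\}\subset D_{1/e}$ — this is essentially the first assertion of Lemma \ref{lowerboundCrhosig}, reached much more cheaply — and then you get $d_\varepsilon>0$ and $c_\varepsilon>0$ by soft compactness, after truncating to $|z|\le 1.6$ via Corollary \ref{D1.6corollary} (Lemma \ref{extrapollemma4.3shape} with radius $2$ would do just as well and avoids the computational table). The trade-off is clear: your route is shorter and avoids the heaviest auxiliary lemma, but being non-quantitative it proves only the existence of $n_0(\varepsilon)$, whereas the paper's explicit constants are what allow it to extract the effective ${\cal O}(1/\sqrt{n})$ version in Corollary \ref{2+deltacorollary}. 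For the theorem as stated, your proof is complete.
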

\begin{proof} Let us fix any $\varepsilon>0$, and set $\vr_\varepsilon:=1/e+\varepsilon$ and 
$\delta_\varepsilon:={\varepsilon e}/{\sqrt{e^2+1}}$. Then for any $\vr\ge \vr_\varepsilon$ 
and $z\in C_\vr$, we know by Lemma \ref{lowerboundCrhosig} that 
$\left|z,\sig \right|\ge \delta_\varepsilon$. On the other hand, (\ref{Tnfirstestimate}) 
implies $|T_n(z)|\le 2e\sqrt{n}$ for $z\notin \sig$, then
the Cauchy inequality for the derivative that 
$|T'_n(z)|\le {2e\sqrt{n}}/{\delta_\varepsilon}$ for $|z,\sig|\ge \delta_\varepsilon$. 
So we get that
\[
\exists\, \widetilde{n}(\varepsilon)\in\mathbb{N} \quad \forall n\ge \widetilde{n}(\varepsilon)
\quad \forall \vr\ge \vr_\varepsilon \quad \forall z\in C_\vr :\quad
\frac{|T'_n(z)|}{n}\le\frac{2e}{\delta_\varepsilon\sqrt{n}}<1.
\]
Then for such $n$ and $z$ values we proceed as in (\ref{Theorem31Tnlowerestimate}):
%but also by Lemma \ref{d1/4} that
by $\vr_\varepsilon>{1}/{e}$ and (\ref{zz-1inf}) in Lemma \ref{infsupsuplemma} 
with $\sigma:=1/e$ we have that
\[
|T_n(z)|\ge 
\left(\inf\left\{\left| \frac{w}{w-1}\right| : w\in\Complex, |w|\ge \vr_\varepsilon, \Re(w)\le 0 \right\}\right)
\left( 1-\frac{|T'_n(z)|}{n}\right)\ge
\]
\[
\left(\inf\left\{\left| \frac{w}{w-1}\right| : w\in\Complex, |w|\ge \frac{1}{e}\right\}\right)
\left( 1-\frac{2e}{\delta_\varepsilon\sqrt{\widetilde{n}(\varepsilon)}}\right)=\frac{1}{1+e} \left( 1-\frac{2e}{\delta_\varepsilon\sqrt{\widetilde{n}(\varepsilon)}}\right)>0.
\]
This yields, again by Lemma \ref{factorialestimatelemma},  for $n\ge \widetilde{n}(\varepsilon)$,
$\vr\ge \vr_\varepsilon$ and $z\in C_\vr$ that 
\[
|\Pp_n(z)|=\frac{n^n |z|^n}{n!}|T_n(z)|\ge \frac{|z|^n e^n}{(1+e)e \sqrt{n}} \left( 1-\frac{2e}{\delta_\varepsilon\sqrt{\widetilde{n}(\varepsilon)}}\right)\ge
\frac{(1+\varepsilon e)^n}{(1+e)e \sqrt{n}} \left( 1-\frac{2e}{\delta_\varepsilon\sqrt{\widetilde{n}(\varepsilon)}}\right),
\]
and the last right-hand side is $>1$ for all $n$ larger than a suitable $n_0(\varepsilon)$.
\end{proof}

By repeating the above proof with natural modifications, we obtain
the following (more effective) version.
\begin{cor}\label{2+deltacorollary} Let us fix any $\delta>0$ and set
$\vr_n:=\frac{1}{e}+\frac{(2+\delta)\sqrt{e^2+1}}{\sqrt{n}}$. 
Then for any $n\in\mathbb{N}^+$ we have
\[\displaystyle \Ss_n\cap \{\Re\le 0\}\subset D_{\vr_n}\cap \{\Re\le 0\}.\]
\end{cor}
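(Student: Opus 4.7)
The plan is to imitate the proof of Theorem~\ref{Thm4.1asy} almost verbatim, but with the constant parameter $\varepsilon$ replaced by the $n$-dependent choice
\[
\varepsilon_n:=\frac{(2+\delta)\sqrt{e^2+1}}{\sqrt{n}},
\]
tailored so that $\vr_{\varepsilon_n}=\vr_n$ and $\delta_{\varepsilon_n}=\varepsilon_n e/\sqrt{e^2+1}=(2+\delta)e/\sqrt{n}$.

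For $z$ with $|z|\ge\vr_n$ and $\Re(z)\le 0$, Lemma~\ref{lowerboundCrhosig} gives $|z,\sig|\ge\delta_{\varepsilon_n}$, and (\ref{Tnfirstestimate}) together with the Cauchy inequality on $D_{\delta_{\varepsilon_n}}(z)$ yields
\[
\frac{|T'_n(z)|}{n}\le\frac{2e\sqrt{n}}{n\,\delta_{\varepsilon_n}}=\frac{2}{2+\delta}<1
\]
for every $n\in\mathbb{N}^+$. The crucial improvement over Theorem~\ref{Thm4.1asy} is that this ratio sits strictly below~$1$ uniformly in $n$, so no auxiliary largeness condition on $n$ arises. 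Substituting into (\ref{TnODE}) and applying Lemma~\ref{infsupsuplemma} with $\sigma=1/e$ to bound $|z/(z-1)|\ge 1/(1+e)$ leads to $|T_n(z)|\ge\delta/\bigl((1+e)(2+\delta)\bigr)$. Invoking Lemma~\ref{factorialestimatelemma} and $|z|\ge\vr_n$ then gives
\[
|\Pp_n(z)|\ge\frac{(e\vr_n)^n}{e\sqrt{n}}\cdot\frac{\delta}{(1+e)(2+\delta)}=\frac{\bigl(1+(2+\delta)e\sqrt{e^2+1}/\sqrt{n}\bigr)^n}{e\sqrt{n}}\cdot\frac{\delta}{(1+e)(2+\delta)}.
\]
Because $(1+a/\sqrt{n})^n$ grows super-polynomially for any fixed $a>0$, this expression exceeds~$1$ for all $n$ beyond an explicit threshold $N_0(\delta)$, so the inclusion holds there.

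For the finitely many remaining indices $1\le n<N_0(\delta)$, $\vr_n$ is comparatively large and I would fall back on the outer bounds already established: Lemma~\ref{extrapollemma4.3shape} whenever $\vr_n\ge 2$, and Corollary~\ref{D1.6corollary} whenever $\vr_n\ge 1.6$. The main obstacle I foresee is gluing these coverage regions together without a gap for \emph{every} $\delta>0$: for very small $\delta$ there can be a transitional range in which $\vr_n\in[1,1.6)$ and neither fallback applies directly. In that window I expect to need the sharper bound (\ref{Tnsecondestimate}) in place of (\ref{Tnfirstestimate}), which halves the Cauchy estimate to $|T'_n(z)|/n\le 1/(2+\delta)$ and enlarges the lower bound on $|T_n|$ by the factor $(1+\delta)/\delta$, enough to push $|\Pp_n|$ above~$1$ across the last interval and thereby close the argument for all $n\in\mathbb{N}^+$.
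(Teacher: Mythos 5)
Your main computation is precisely the ``natural modification'' of the proof of Theorem \ref{Thm4.1asy} that the paper intends (the paper supplies no further details): the choice $\varepsilon_n=(2+\delta)\sqrt{e^2+1}/\sqrt{n}$ makes $|T_n'(z)|/n\le 2/(2+\delta)<1$ uniformly in $n$, whence $|T_n(z)|\ge\delta/\bigl((1+e)(2+\delta)\bigr)$ and $|\Pp_n(z)|\ge\frac{(e\vr_n)^n}{e\sqrt{n}}\cdot\frac{\delta}{(1+e)(2+\delta)}$ for $|z|\ge\vr_n$, $\Re(z)\le 0$. This part is correct, and you are right that by itself it only yields the inclusion for $n\ge N_0(\delta)$, so the remaining indices need separate treatment.

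The gap lies in your patch for those remaining indices. First, the problematic window is not only $\vr_n\in[1,1.6)$: for $\delta$ small enough, $N_0(\delta)$ exceeds the index at which $\vr_n$ drops below $1$ (and even below $0.95$), so part of the window has $\vr_n<1$, where (\ref{Tnsecondestimate}) says nothing about the critical circle $|z|=\vr_n$. Second, even where $\vr_n\ge 1$ you cannot ``halve the Cauchy estimate'': to use the bound (\ref{Tnsecondestimate}), valid on $\{|w|\ge 1\}$, in the Cauchy inequality with radius $\delta_{\varepsilon_n}=(2+\delta)e/\sqrt{n}$ you need $D_{\delta_{\varepsilon_n}}(z)\subset\{|w|\ge 1\}$, i.e.\ $\vr_n\ge 1+\delta_{\varepsilon_n}$, which reduces to $n\le 0.08\,(2+\delta)^2$ and so never holds; shrinking the radius to $\vr_n-1$ destroys the estimate as $\vr_n\downarrow 1$. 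Two remarks on what does close the argument. (i) One checks that $n\mapsto(e\vr_n)^n/(e\sqrt{n})$ is increasing, so over the nontrivial range $\{n:\vr_n<2\}$ it is minimized at the first such index (about $n=13$), where it is roughly $3\cdot 10^{8}$; hence your main bound together with Lemma \ref{extrapollemma4.3shape} already covers \emph{every} $n$ once $\delta\gtrsim 10^{-7}$, and invoking Theorem \ref{Thm4.2concr} for the indices with $\vr_n\ge 0.95$ pushes this down to $\delta\gtrsim 10^{-38}$. (ii) For arbitrary $\delta>0$ the robust fix is the second Cauchy iteration from the proof of Theorem \ref{Thm4.2concr}: feeding the upper bound on $|T_n|$ obtained from (\ref{TnODE}) back into the Cauchy inequality upgrades $|T_n'(z)|/n\le 2/(2+\delta)$ to a bound of order $1/\sqrt{n}$ with constant independent of $\delta$, which makes the threshold independent of $\delta$ and leaves only indices with $\vr_n\ge 0.95$, covered by Theorem \ref{Thm4.2concr} and Lemma \ref{extrapollemma4.3shape}.
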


\begin{rem}\label{optimalityremarkofTh41}
Like Theorem \ref{2eoverdeltasndn+deltatheorem}, 
Theorem \ref{Thm4.1asy} is asymptotically optimal:
the constant $1/e$ in $D_{1/e+\varepsilon}$ cannot be replaced by a smaller positive number,
since $-1/e\in \Ss_\infty$ (\textit{cf.} Remark \ref{optimalityremarkofTh31}).
\end{rem}

Table \ref{farthestpointTaylorpolylefthalfplane} contains
the values $\max\{|z|: z\in\Ss_n\cap \{\Re \le 0\} \}$ for $1\le n\le 20$.
It can be seen from Figures \ref{firstfewunscaledTaylorStabRegions} and
\ref{firstfewscaledTaylorStabRegions} that, for larger $n$ values,  
the set $\Ss_n\cap \{\Re \le 0\}$ is close to a semi-disk centered at the origin; see also
the radial slices at the end of Section \ref{sec:semidisksection}. 
For
$1\le n\le 20$, the set $\Ss_n\cap \{\Re \le 0\}$ can be covered by a disk
with radius slightly less than 
$\frac{1}{e}+\frac{\ln n}{2 e n}+\frac{1.64}{n}$; \textit{cf.} Theorems \ref{limitofSn} and 
\ref{Thm4.1asy}, Remark \ref{convergencespeedremark}, and observation 
$\textbf{O}_3$ in the beginning of Section
\ref{nearimaginarysection}. Also notice that the asymptotic series of 
$\sqrt[n]{\frac{n!}{n^n}}$ (as $n\to+\infty$) starts with
$\frac{1}{e}+\frac{\ln n}{2 e n}+\frac{\ln (2 \pi )}{2 e n}$.

\begin{table}[h]
\begin{center}
    \begin{tabular}{c|c||c|c}
    $n$ & $\max_{z\in\Ss_n\cap \{\Re \le 0\}} |z|$ & $n$ & $\max_{z\in\Ss_n\cap \{\Re \le 0\}} |z|$ \\ \hline
    1 & 2 & 11 & 0.496 \\
    2 & $\frac{1}{2}\sqrt{2 \left(1+\sqrt{2}\right)}\approx 1.099$ & 12 & 0.486 \\
    3 & 0.847 & 13 & 0.480 \\
    4 & 0.741 & 14 & 0.476 \\
    5 & 0.680 & 15 & 0.474 \\
    6 & 0.597 & 16 & 0.458 \\
    7 & 0.566 & 17 & 0.453 \\
    8 & 0.546 & 18 &  0.450\\
    9 & 0.534 & 19 &  0.449\\
    10 & 0.527 & 20 &  0.448\\ \hline
    \end{tabular}
\caption{For $n\ge 3$, the exact values of $\max_{z\in\Ss_n\cap \{\Re \le 0\}} |z|$ (rounded up).\label{farthestpointTaylorpolylefthalfplane}}
\end{center}
\end{table}

\begin{thm}\label{Thm4.2concr}
For each $n\ge 3$ we have 
\[\displaystyle \Ss_n\cap \{\Re\le 0\}\subset D_{0.95}\cap \{\Re\le 0\}.\]
\end{thm}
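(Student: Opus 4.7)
The plan is to split the range $n\ge 3$ into a small-$n$ part handled by Table~\ref{farthestpointTaylorpolylefthalfplane} and a tail handled by Corollary~\ref{2+deltacorollary}. For $3\le n\le 20$, Table~\ref{farthestpointTaylorpolylefthalfplane} shows $\max_{z\in\Ss_n\cap\{\Re\le 0\}}|z|\le 0.847$, with the maximum attained at $n=3$; since $0.847<0.95$, the inclusion is immediate on this range.

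For large $n$ I would invoke Corollary~\ref{2+deltacorollary}. Fixing $\delta>0$ small, we have $\Ss_n\cap\{\Re\le 0\}\subset D_{\vr_n}\cap\{\Re\le 0\}$ with $\vr_n=\tfrac{1}{e}+(2+\delta)\sqrt{e^2+1}/\sqrt{n}$; the constraint $\vr_n\le 0.95$ becomes $\sqrt{n}\ge (2+\delta)\sqrt{e^2+1}/(0.95-\tfrac{1}{e})$. Since $0.95-\tfrac{1}{e}\approx 0.582$ and $\sqrt{e^2+1}\approx 2.897$, choosing $\delta$ arbitrarily small puts the threshold at roughly $n=100$, so Corollary~\ref{2+deltacorollary} closes the case $n\ge 100$.

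The main obstacle is the gap $21\le n\le 99$ between the table and the asymptotic threshold. The cleanest fix is to extend the exact \texttt{Maximize}-based algebraic optimization that produced Table~\ref{farthestpointTaylorpolylefthalfplane} to the new range; the tabulated values drop smoothly from $0.527$ at $n=10$ to $0.448$ at $n=20$, so additional entries should stay well below $0.95$, and the reported $40$-second runtime at $n=20$ leaves the intermediate cases within reach on modern hardware (one can also filter candidates by the crude bound $|z|\le 1$ from Lemma~\ref{extrapollemma4.3shape}). An alternative avoiding any extra enumeration would be to sharpen the proof of Theorem~\ref{Thm4.1asy} by replacing the bound $|T_n|\le 2e\sqrt{n}$ with the sharper (\ref{Tnsecondestimate}) on $\{|z|\ge 1\}$, using a Cauchy disk tangent to the unit circle; this might push the effective threshold below $21$ and eliminate the gap, but the transition across $\partial D_1$ (where neither bound is cleanly dominant for points $z\in C_{0.95}$ sitting just inside) makes the bookkeeping delicate.
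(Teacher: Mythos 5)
Your split is the same as the paper's for $3\le n\le 20$ (Table~\ref{farthestpointTaylorpolylefthalfplane}), and you correctly identify that Corollary~\ref{2+deltacorollary} only closes the tail at roughly $n\ge 100$, leaving $21\le n\le 99$ uncovered. That gap is real, and neither of your proposed fixes is actually carried out, so the proposal as written does not prove the theorem. The first fix (extending the exact algebraic maximization to $n\le 99$) is plausible but is only a promissory note; the second (swapping in (\ref{Tnsecondestimate}) on $\{|z|\ge 1\}$) runs into exactly the obstruction you flag, since the critical circle $C_{0.95}$ sits strictly inside $D_1$ where that bound is unavailable.

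The idea the paper actually uses for $n\ge 21$ is different and worth noting: it bootstraps the $T\mapsto T'$ estimate a \emph{second} time. Starting from $|T_n|\le 2e\sqrt n$ off $\Sigma_1$ and Cauchy to get $|T_n'|\le 2e\sqrt n/\delta_\vr$, it plugs this into (\ref{TnODE}) to obtain an $O(1)$ \emph{upper} bound on $|T_n|$ on a slightly shrunken region (namely $\sup|w/(w-1)|\cdot(1+2e/(\delta_\vr\sqrt n))$, controlled via Lemma~\ref{infsupsuplemma}), then applies Cauchy once more to get $|T_n'|/n=O(1/n)$ rather than $O(1/\sqrt n)$. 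That improved rate is what brings the effective threshold down to $21$, matching the table's upper end. The paper's own Remark~\ref{convergencespeedremark} makes this explicit: Corollary~\ref{2+deltacorollary} only beats $0.95$ at $n\ge 100$, which is precisely why the iterated argument is needed. If you want to salvage your route without recomputing, this two-pass bootstrap (together with Lemmas~\ref{d1/4}, \ref{lowerboundCrhosig}, and \ref{infsupsuplemma}, and the containment $\Ss_n\subset D_2$ from Lemma~\ref{extrapollemma4.3shape} to cap $\vr$ at $2$) is the missing ingredient.
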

\begin{proof} The computations in Table \ref{farthestpointTaylorpolylefthalfplane} prove the statement
of the theorem for $3\le n\le 20$. So, due to Lemma \ref{extrapollemma4.3shape} also, it is enough
to show that $|\Pp_n(z)|>1$ for any $z\in C_\vr$ with $\vr\in [0.95,2]$ and $n\ge 21$. 

Let us set
$\delta_\vr:={(\vr e-1)}/{(2\sqrt{e^2+1})}>0$. Then (\ref{Tnfirstestimate}) 
implies $|T_n(z)|\le 2e\sqrt{n}$ for $z\notin \sig$, and the Cauchy inequality for the derivative that 
$|T'_n(z)|\le {2e\sqrt{n}}/{\delta_\vr}$ for $|z,\sig|\ge \delta_\vr$ and, say, 
$\Re(z)\le \delta_\vr$. Now from (\ref{TnODE}) we obtain
\begin{equation}\label{Thm4.2Tnupper}
|T_n(z)|\le 
\left(\sup\left\{\left| \frac{w}{w-1}\right| : w\in\Complex, |w,\sig|\ge \delta_\vr, \Re(w)\le \delta_\vr \right\}\right)
\left( 1+\frac{2e}{\delta_\vr\sqrt{n}}\right)
\end{equation}
for $z$ with $|z,\sig|\ge \delta_\vr, \Re(z)\le \delta_\vr$. We notice that for any $z$ with
$|z,\sig|\ge 2\delta_\vr, \Re(z)\le 0$ we have 
\[
D_{\delta_\vr}(z)\subset \{w\in \Complex :  |w,\sig|\ge \delta_\vr, \Re(w)\le \delta_\vr\},
\]
so  (\ref{Thm4.2Tnupper}) and the Cauchy inequality for the derivative again yield
\begin{equation}\label{Thm4.2Tnprimeupper}
\frac{|T'_n(z)|}{n}\le \frac{1}{n\delta_\vr}
\left(\sup\left\{\left| \frac{w}{w-1}\right| : w\in\Complex, |w,\sig|\ge \delta_\vr, \Re(w)\le \delta_\vr \right\}\right)
\left( 1+\frac{2e}{\delta_\vr\sqrt{n}}\right)
\end{equation}
for any $z$ with $|z,\sig|\ge 2\delta_\vr, \Re(z)\le 0$. We check with the help of
Lemma \ref{infsupsuplemma} that the right-hand side of
(\ref{Thm4.2Tnprimeupper}) is $<1$ for all $n\ge 21$ and 
$\vr\in\left[0.95,\frac{1+\sqrt{1+e^2}}{e}\right]\cup \left(\frac{1+\sqrt{1+e^2}}{e},2\right]$. 

Now by Lemma \ref{d1/4} we see that for any $w\in\Complex$ and $|w,\sig|\ge 2\delta_\vr$ we have $|w|\ge 1/4$.
So by (\ref{zz-1inf}) in Lemma \ref{infsupsuplemma} with $\sigma:=1/4$ and by (\ref{TnODE}) again we showed that for $n\ge 21$, 
$|z,\sig|\ge 2\delta_\vr$ and $\Re(z)\le 0$ we have
\[
|T_n(z)|\ge 
\left(\inf\left\{\left| \frac{w}{w-1}\right| : w\in\Complex, |w,\sig|\ge 2\delta_\vr, \Re(w)\le 0 \right\}\right)
\left( 1-\frac{|T'_n(z)|}{n}\right)\ge
\]
\[
\left(\inf\left\{\left| \frac{w}{w-1}\right| : w\in\Complex, |w|\ge 1/4 \right\}\right)
\left( 1-\frac{|T'_n(z)|}{n}\right)=\frac{1}{5}\left( 1-\frac{|T'_n(z)|}{n}\right)\ge
\]
\[
\frac{1}{5}\left(1- \frac{1}{n\delta_\vr}
\left(\sup\left\{\left| \frac{w}{w-1}\right| : w\in\Complex, |w,\sig|\ge \delta_\vr, \Re(w)\le \delta_\vr \right\}\right)
\left( 1+\frac{2e}{\delta_\vr\sqrt{n}}\right)\right).
\]
We use Lemma \ref{infsupsuplemma} one more time to verify that this last lower
estimate of $|T_n(z)|$ is $\ge 4\cdot 10^{-4}$ for $n\ge 21$ and 
$\vr\in\left[0.95,\frac{1+\sqrt{1+e^2}}{e}\right]\cup \left(\frac{1+\sqrt{1+e^2}}{e},2\right]$. 

Finally, from Lemma \ref{lowerboundCrhosig} and the definition of $\delta_\vr$
we see that $\vr\in [0.95,2]$, $z\in C_\vr$ and $n\ge 21$ imply $|z,\sig|\ge 2\delta_\vr$ and $\Re(z)\le 0$, and hence  
\[
|T_n(z)|\ge 4\cdot 10^{-4}.
\]
Therefore, for $\vr\in [0.95,2]$, $z\in C_\vr$ and $n\ge 21$ (by Lemma \ref{factorialestimatelemma} also) we obtain
\[
|\Pp_n(z)|=\frac{n^n |z|^n}{n!}|T_n(z)|\ge  \frac{n^n \cdot 0.95^n}{n!}\cdot 4\cdot 10^{-4}\ge
\frac{e^n \cdot 0.95^n}{e\sqrt{n}}\cdot 4\cdot 10^{-4}>14422>1.
\]
\end{proof}

\begin{rem} One could slightly reduce the constant $0.95$ in $D_{0.95}$ by 
performing another iteration (or several more iterations) in the proof above
(an estimate on $T$ $\leadsto$ an estimate on $T'$ $\leadsto$
an estimate on $T$, and so on), but then the starting index $21$ in $n\ge 21$
would jump to a much higher value.
\end{rem}

\begin{rem}\label{convergencespeedremark} The main obstacle to improve the bounds appearing in Theorems 
\ref{2eoverdeltasndn+deltatheorem} and \ref{Thm4.2concr} is that we 
could only guarantee the positivity of $\left|1+\frac{T'_n(z)}{n}\right|$
%have only 
%relatively weak bounds on $|T'_n(z)|$, so to ensure ${|T'_n(z)|}/{n}<1$, we have
by applying the inequality $|a-b|\ge ||a|-|b||$ and choosing large $n$ values. For example, in order for Corollary \ref{2+deltacorollary}
to be stronger than the estimate in Corollary \ref{D1.6corollary}, we need $n\ge 23$, and 
to be stronger than the estimate in Theorem \ref{Thm4.2concr}, we need $n\ge 100$.
The convergence rate ${\cal{O}}\left(\frac{1}{\sqrt{n}}\right)$ presented in Corollary \ref{2+deltacorollary} 
naturally follows from the estimates on $|T_n|$ and $|T'_n|$; we do not know
whether this rate can be improved. As a related result, \cite{rocky} proves
that the convergence rate of the zeros $\zeta_k(n)$ in (\ref{convergencerate}) to $\partial\sig$
is exactly ${\cal{O}}\left(\frac{1}{\sqrt{n}}\right)$ as $n\to +\infty$, but the rate
improves to ${\cal{O}}\left(\frac{\ln n}{n}\right)$ if zeros in a small disk 
$D_\varepsilon(1)$ ($\varepsilon\in (0,1]$ arbitrary but fixed)
are ignored.
\end{rem}

\section{The stability region near the imaginary axis}\label{nearimaginarysection}

In this section we are concerned with the imaginary part of the 
``upper vertical slice" of $\Us_n$ 
along the imaginary axis,
\[
\Vv_n:=\left\{\Im(z) :  z \in \Complex, \,  \Re(z) = 0,\, \Im(z)\ge 0, \,
\displaystyle \left|\sum_{k=0}^n \frac{z^k}{k!}\right|\le 1  \right\}\quad\quad (n\in\mathbb{N}^+).
\]
We focus only on the upper half-plane due to symmetry, and
use the unscaled regions because they lead to simpler
expressions.  By also taking into account the explicit representations given in Lemma
\ref{lemmaalongtheimaginaryaxis} below, it is easy to determine the set $\Vv_n$ for a particular $n$;
see Figure
\ref{checkerboard20} for $1\le n \le 20$, and Figure \ref{checkerboard}
 for $1\le n \le 100$.  Based on these figures and
on the exact represenation of the endpoints of the shaded intervals as {\texttt{Root}} 
objects in \textit{Mathematica},  
the following observations are made.
\begin{enumerate}
    \item[$\textbf{O}_1$.] For $n\equiv 0 \mod 4$ or $n\equiv 3\mod 4$, the connected         component of $\Vv_n$ containing the origin 
            is an interval of positive length, while for $n\equiv 1 \mod 4$ or $n\equiv 2\mod 4$ the
corresponding interval is the singleton $\{0\}$.
    \item[$\textbf{O}_2$.] For $n\ge 1$, $\Vv_n$ consists of disjoint compact intervals whose
endpoints 
tend to the grid $\{\pi \ell : \ell\in \mathbb{N}\}$ as $n\to + \infty$ for even $n$, or
to the grid $\{0\}\cup \{\pi/2+ \pi \ell : \ell\in \mathbb{N}\}$
as $n\to + \infty$ for odd $n$.
%The convergence in each case is slow.  (Relative to what?)
For a fixed and large enough $n$, non-degenerate
intervals in $\Vv_n$ or in the complement
of $\Vv_n$ alternate one after another as we move away from the origin; the pattern starts
according to the rule described by $\textbf{O}_1$. 
    \item[$\textbf{O}_3$.] The farthest point of $\Vv_n$ from the origin is bounded from above by
$\frac{n}{e}+\frac{\ln n}{2 e }+1.2604$ for $1\le n \le 100$ (see the top red curve in 
Figure \ref{checkerboard}). 
%increases at an approximately linear rate with $n$.
\end{enumerate}

In the following we explain observations $\textbf{O}_1$ and $\textbf{O}_2$, and
indicate how $\textbf{O}_3$ is related to some earlier results in the literature.
At the end of the section we also investigate the boundary curve
of $\Us_n$ that oscillates around the imaginary axis: intersection points of the
boundary curve and the upper semi-axis define the endpoints of the shaded intervals 
in the corresponding column of Figure \ref{checkerboard}---we illustrate the amplitude of
these oscillations for different $n$ values.

\begin{figure}[h!]
  \centering
  \includegraphics[width=5in]{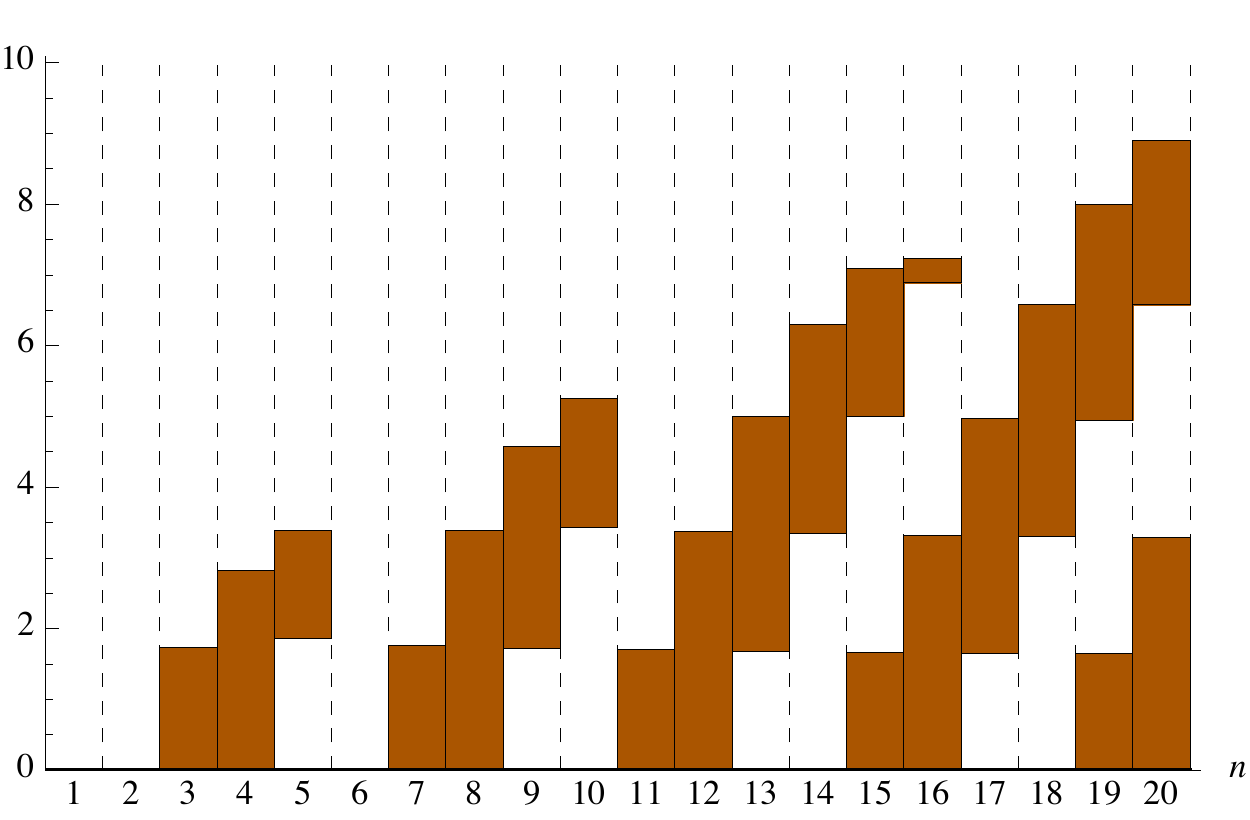}
  \caption{For each $1\le n\le 20$, the intervals constituting 
      $\Vv_n$ are represented as shaded rectangles. \label{checkerboard20}}
\end{figure}

\begin{figure}[h!]
  \centering
  \includegraphics[width=6.6in]{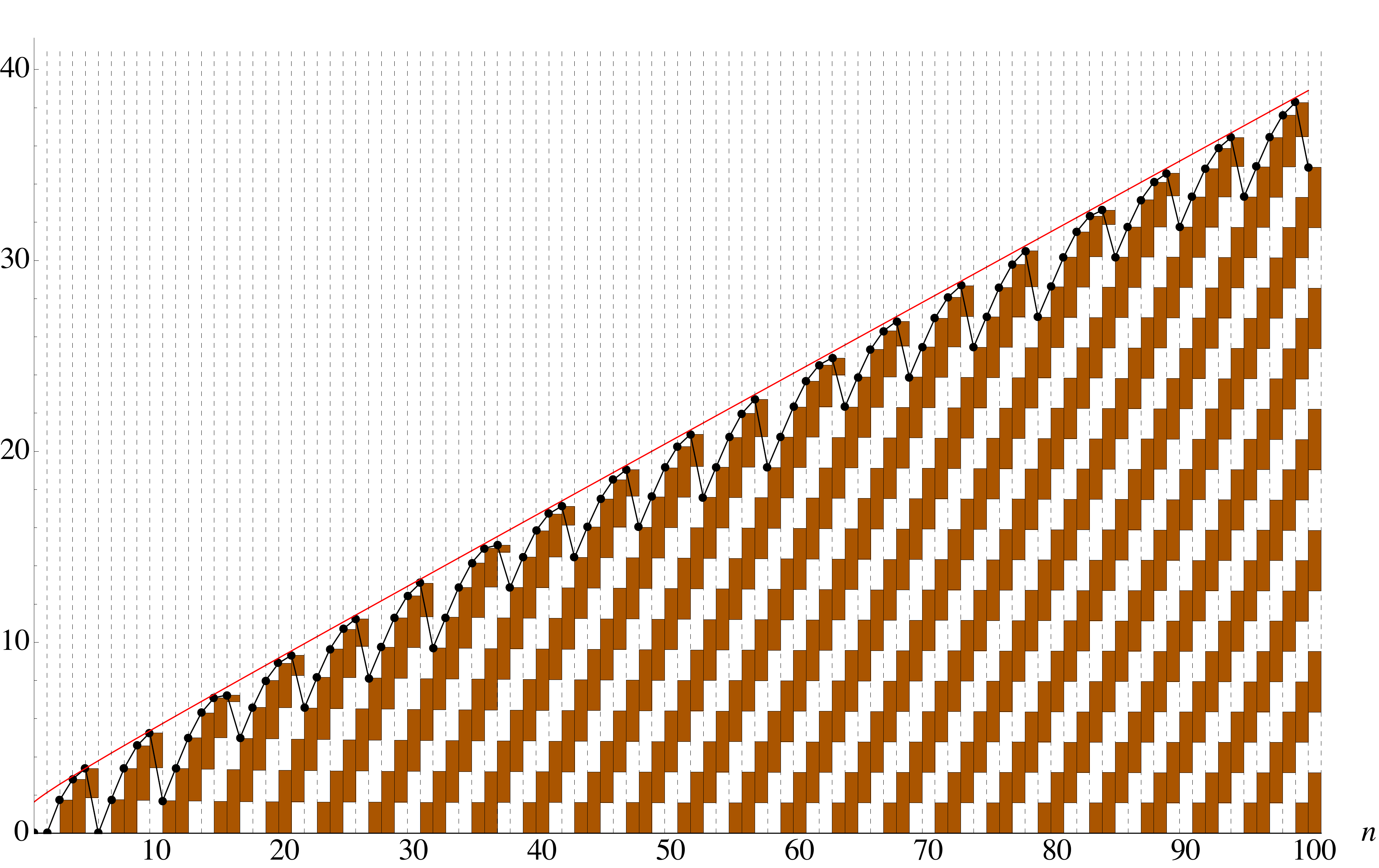}
  \caption{Extension of Figure \ref{checkerboard20} for $1\le n\le 100$.
Black dots (with piecewise linear interpolation)
show the sequence $\max \left(\Vv_n\right)$. 
The blocks consisting of monotone non-decreasing adjacent elements have length 
5, 5, 6, 5, 5, 5, 6, 5, 5, 5, 5, 6, 5, 5, 5, 6, 5, 5, 5 in the given range
(\textit{cf.}  the ``separation of bubbles" in Figure \ref{firstfewunscaledTaylorStabRegions}).
The top red curve, 
being the graph of $n\mapsto\frac{n}{e}+\frac{\ln n}{2 e }+1.2604$, is an upper estimate 
in the given range.}\label{checkerboard}
\end{figure}

The first lemma 
explicitly describes the absolute value of the $n^\mathrm{th}$
Taylor polynomial of the exponential function along the imaginary axis,
simultaneously providing us with ``finite truncations" of the identity $\cos^2 y+\sin^2 y=1$.
It is convenient to refer to the $E$-polynomial, which was used in \cite{Wanner1978} to study the
behavior of rational functions along the imaginary axis.  For the polynomials
studied here, the $E$-polynomial takes the form
\[
E_n(y) :=
\left|\sum_{k=0}^n \frac{(iy)^k}{k!}\right|^2 - 1 \quad\quad (y\in\mathbb{R},\, n\in\mathbb{N}^+).
\]

\begin{lem}\label{lemmaalongtheimaginaryaxis}
For any $y\in\mathbb{R}$ and $n\ge 1$ integer we have
\[
E_n(y) =
\begin{dcases}
-\frac{y^{n+1}}{n!}\sum_{k=1}^{n/2} \frac{(-1)^{k+1}y^{2k-1}}{(2k-1)!\left(k+\frac{n}{2}\right)}\, , & \text{if } n\equiv 0\mod 4, \\
\ \ \frac{y^{n+1}}{n!}\sum_{k=0}^{\frac{n-1}{2}} \frac{(-1)^k y^{2k}}{(2k)!\left(k+\frac{n+1}{2}\right)}\, , & \text{if } n\equiv 1\mod 4, \\
\ \ \frac{y^{n+1}}{n!}\sum_{k=1}^{n/2} \frac{(-1)^{k+1}y^{2k-1}}{(2k-1)!\left(k+\frac{n}{2}\right)}\, , & \text{if } n\equiv 2\mod 4, \\
-\frac{y^{n+1}}{n!}\sum_{k=0}^{\frac{n-1}{2}} \frac{(-1)^k y^{2k}}{(2k)!\left(k+\frac{n+1}{2}\right)}\, , & \text{if } n\equiv 3\mod 4.
\end{dcases}
\]
\end{lem}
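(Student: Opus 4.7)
The plan is to compute $|\sum_{k=0}^n (iy)^k/k!|^2$ by direct expansion. Because the underlying Taylor polynomial has real coefficients,
\[
E_n(y) + 1 = \left(\sum_{j=0}^n \frac{(iy)^j}{j!}\right)\left(\sum_{k=0}^n \frac{(-iy)^k}{k!}\right) = \sum_{j,k=0}^{n} \frac{i^{\,j-k}\, y^{j+k}}{j!\,k!},
\]
and I would group the double sum according to the total degree $m = j+k$. The coefficient of $y^m$ then reduces to $\frac{i^{-m}}{m!}\,\sigma_n(m)$, where
\[
\sigma_n(m) := \sum_{j = \max(0,\, m-n)}^{\min(n,\, m)} (-1)^j \binom{m}{j}.
\]

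Next I would evaluate $\sigma_n(m)$ in two regimes. For $0 \le m \le n$ the index range is the full $[0,m]$, and the elementary identity $\sum_{j=0}^m(-1)^j\binom{m}{j}$ equals $1$ for $m=0$ and $0$ otherwise; the contributions from these $m$ therefore collapse to the constant $1$, which exactly cancels the $+1$ on the left. For $n < m \le 2n$, set $s := m - n$. The involution $j \mapsto m-j$ gives $\sigma_n(m) = (-1)^m\sigma_n(m)$, forcing $\sigma_n(m) = 0$ when $m$ is odd. For even $m$, the same symmetry combined with $\sum_{j=0}^m(-1)^j\binom{m}{j} = 0$ shows $\sigma_n(m) = -2\sum_{j=0}^{s-1}(-1)^j\binom{m}{j}$, and the standard partial alternating binomial identity
\[
\sum_{j=0}^{s-1}(-1)^j\binom{m}{j} = (-1)^{s-1}\binom{m-1}{s-1}
\]
then yields the closed form $\sigma_n(m) = 2(-1)^s\binom{m-1}{s-1}$. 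Using $m - s = n$, the coefficient of $y^m$ in $E_n$ simplifies to $\frac{(-1)^{m/2+s}}{(m/2)\,(s-1)!\,n!}$.

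What remains is a matter of re-indexing. I would pull out the common prefactor $y^{n+1}/n!$ present in all four formulas of the lemma and split on the parity of $n$: for $n$ even, $m = n + 2k$ with $k = 1,\ldots,n/2$, so $s = 2k$ and $m/2 = n/2 + k$; for $n$ odd, $m = n + 2k + 1$ with $k = 0,\ldots,(n-1)/2$, so $s = 2k+1$ and $m/2 = (n+1)/2 + k$. The global sign is $(-1)^{n/2}$ for $n$ even and $(-1)^{(n+1)/2+1}$ for $n$ odd, which produces $\pm 1$ according to the residue of $n$ modulo $4$. A short case check matching $(-1)^{m/2+s}$ against the $(-1)^{k+1}$ or $(-1)^k$ demanded in each of the four statements completes the identification. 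The main obstacle here is simply the sign bookkeeping across the four residue classes: both binomial identities invoked are one-line lemmas and the double sum splits cleanly along $m = n$, so once the parities are tracked correctly the proof is only a few lines of computation.
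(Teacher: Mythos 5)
Your argument is correct. The paper itself omits the proof, describing it only as ``straightforward,'' and the route you take---expanding $|\Pp_n(iy)|^2$ as a double sum, reindexing by total degree $m=j+k$, reducing the inner sum to the alternating binomial sum $\sigma_n(m)=\sum_j(-1)^j\binom{m}{j}$ over the truncated range, and invoking the telescoping identity $\sum_{j=0}^{s-1}(-1)^j\binom{m}{j}=(-1)^{s-1}\binom{m-1}{s-1}$---is exactly the direct computation the authors had in mind. I verified the sign bookkeeping: $i^{-m}=(-1)^{m/2}$ for even $m$, $\sigma_n(m)=2(-1)^s\binom{m-1}{s-1}$ with $s=m-n$, and the four residue-class prefactors $(-1)^{n/2}$ (for $n$ even) and $(-1)^{(n+1)/2+1}$ (for $n$ odd) reproduce the stated signs $\mp 1$ once the $(-1)^{k}$ versus $(-1)^{k+1}$ conventions in the lemma are taken into account. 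In short, this is a complete and correct version of the proof the paper chose not to spell out.
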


The straightforward proof of the above lemma is omitted.
The following corollary shows that the sign of the lowest order term of $E_n$ explains $\textbf{O}_1$.

\begin{cor}\label{corollary51} For any positive integer $n$
with $n\equiv 1 \mod 4$ or $n\equiv 2\mod 4$, there exists $\vr_n>0$ such that
\[
  \Vv_n \cap [0, \vr_n] =\{0\}.
\]
On the other hand, for any positive integer $n$
with $n\equiv 0 \mod 4$ or $n\equiv 3\mod 4$, there exists $\vr_n>0$ such that
\[
 [0, \vr_n]  \subset \Vv_n.
\]
\end{cor}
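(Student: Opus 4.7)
The plan is to read off the sign of the lowest-order non-vanishing term of $E_n(y)$ at $y=0$ directly from the four formulas of Lemma \ref{lemmaalongtheimaginaryaxis}, and then invoke continuity. Note that $y\in \Vv_n$ if and only if $y\ge 0$ and $E_n(y)\le 0$; so it suffices to determine the sign of $E_n(y)$ for small $y>0$.

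First I would look at the inner sum in each of the four cases and pick out the term of smallest exponent in $y$: for $n\equiv 0\,(\mathrm{mod}\,4)$ the $k=1$ term of the sum is $y/(1+n/2)$, so the lowest-order term of $E_n(y)$ is $-y^{n+2}/\bigl(n!\,(1+n/2)\bigr)$; for $n\equiv 1\,(\mathrm{mod}\,4)$ the $k=0$ term yields $+2y^{n+1}/\bigl(n!\,(n+1)\bigr)$; for $n\equiv 2\,(\mathrm{mod}\,4)$ the $k=1$ term yields $+y^{n+2}/\bigl(n!\,(1+n/2)\bigr)$; for $n\equiv 3\,(\mathrm{mod}\,4)$ the $k=0$ term yields $-2y^{n+1}/\bigl(n!\,(n+1)\bigr)$. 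In all four cases the coefficient of the leading term is a nonzero rational with the predicted sign.

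Next I would write $E_n(y) = c_n y^{m_n}\bigl(1 + R_n(y)\bigr)$, where $c_n$ is the nonzero coefficient identified above, $m_n\in\{n+1,n+2\}$ is the order of vanishing at $0$, and $R_n$ is a polynomial with $R_n(0)=0$. By continuity there exists $\varrho_n>0$ such that $|R_n(y)|\le 1/2$ for all $y\in[0,\varrho_n]$, and hence $E_n(y)$ has the same sign as $c_n$ throughout $(0,\varrho_n]$. If $n\equiv 1$ or $2\,(\mathrm{mod}\,4)$, then $c_n>0$, so $E_n(y)>0$ on $(0,\varrho_n]$, which gives $\Vv_n\cap[0,\varrho_n]=\{0\}$ (the origin being included since $E_n(0)=0$). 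If $n\equiv 0$ or $3\,(\mathrm{mod}\,4)$, then $c_n<0$, so $E_n(y)<0$ on $(0,\varrho_n]$, and thus $[0,\varrho_n]\subset \Vv_n$.

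There is essentially no obstacle here beyond verifying that the $k=0$ or $k=1$ term truly is the one of smallest degree and that its coefficient does not vanish—both are immediate by inspection of the explicit formulas in Lemma \ref{lemmaalongtheimaginaryaxis}. The only care needed is to remember that the empty sum case does not occur (each of the four sums contains the term I picked, since $n\ge 1$ and the index ranges are consistent), and to keep the sign of the prefactor $\pm y^{n+1}/n!$ straight when combining with the first term of the inner sum.
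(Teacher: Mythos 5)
Your proof is correct and follows essentially the same route as the paper: read off the sign of the lowest-order nonvanishing term of $E_n$ from Lemma \ref{lemmaalongtheimaginaryaxis} and conclude by continuity that $E_n$ has that sign on a small punctured right neighborhood of $0$. The paper writes out only the $n\equiv 0 \bmod 4$ case with an $\mathcal{O}(y^{n+4})$ remainder and declares the other three analogous, whereas you have worked all four explicitly, but the argument is the same.
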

\begin{proof}
By Lemma \ref{lemmaalongtheimaginaryaxis} we have for $n\equiv 0 \mod 4$ and $y\in \Real$ that
\[
E_n(y) = 
-\frac{1}{\left(1+\frac{n}{2}\right)n!}\, y^{n+2}+{\cal{O}}(y^{n+4})
\] 
as $y\to 0$, so for some $\vr_n> 0$ sufficiently small, $E_n(y)\le 0$
for all $y\in[0,\vr_n]$. But  
\[
\Vv_n = \{ y\in\mathbb{R} : y\ge 0, \, E_n(y)\le 0\} \quad\quad (n\in\mathbb{N}^+),
\]
therefore this case is finished. The proof for the other three congruence classes is analogous.
\end{proof}

We begin explaining $\textbf{O}_2$ by noting that   
for each $n\ge 1$,
$\displaystyle \lim_{y\to +\infty} E_{n}(y)=+\infty$ and $E_{n}(0)=0$, so 
there exist an index $k_0(n)\in\mathbb{N}^+$ and some
mutually disjoint non-empty compact intervals $I_{n,k}$ ($k=1, 2, \ldots, k_0(n)$)
such that
\begin{equation}\label{Vdecomposition}
\Vv_n=\{ y\in\mathbb{R} : y\ge 0, \, E_n(y)\le 0\} =\bigcup_{k=1}^{k_0(n)}  I_{n,k}.
\end{equation}
These $I_{n,k}$ intervals are just the shaded vertical rectangles
in Figure \ref{checkerboard20} or \ref{checkerboard}.  
It of course can happen that $k_0(n)=1$, or some of the $I_{n,k}$ intervals are singletons.
We order the  intervals $I_{n,k}$ in the natural way so that
$\min I_{n,1}=0$ and $\max I_{n,k}<\min I_{n,k+1}$ ($k=1, 2, \ldots, k_0(n)-1$).
Let us consider now the scaled counterpart of the polynomial $E_n$:
\[
\widetilde{E}_n(y):=\frac{(n+1)!}{2 y^{n+1}} E_{n}(y)\quad\quad (y\in\Real\setminus\{0\}).
\]
Then 
$
\Vv_n = \{0\}\cup \{ y\in\mathbb{R} : y>0, \, \widetilde{E}_n(y)\le 0\}.
$
The advantage of $\widetilde{E}_n$ over $E_n$ is that 
for \[n\equiv 0,\, n\equiv 1, \, n\equiv 2 \text{ and }
n\equiv 3 \mod 4,\] 
$\widetilde{E}_n(y)$ is a perturbation of the truncated series for
\begin{align*}
- \sin y & \equiv - \sum_{k=1}^{\infty} \frac{(-1)^{k+1}y^{2k-1}}{(2k-1)!},\, \cos y, \, 
\sin y \text{ and } -\cos y,
\end{align*}
respectively; see Figure \ref{regularoscillations}. 
This motivates us to use some tools from complex analysis to study
the positive real roots of  $\widetilde{E}_n$.
In what follows, we again focus only on the $n\equiv 0 \mod 4$ case, because 
the explanation of $\textbf{O}_2$ for the other three congruence classes is  analogous. 

Let $f_m$ denote the extension of $\widetilde{E}_n$ with 
$n=4m$ to all of $\Complex$:
\begin{equation}\label{fmdef}
f_m(z)=-\frac{(4m+1)}{2}\sum_{k=1}^{2m} \frac{(-1)^{k+1} z^{2k-1}}{(2k-1)! (k+2m)}
\quad (z\in\Complex,\,m\in \mathbb{N}^+).
\end{equation}
Clearly,
\begin{equation}\label{fmroots}
\Vv_{4m} = \{ y\in\mathbb{R} : y\ge 0, \, f_{m}(y)\le 0\}.
\end{equation}
The following lemma makes the word ``perturbation" above more precise.

\begin{lem}\label{uniformconvergencelemma}
The sequence of polynomials 
$f_m$ defined by (\ref{fmdef})
converges uniformly to the function $-\sin$ on compact subsets of $\Complex$ as $m\to +\infty$.
\end{lem}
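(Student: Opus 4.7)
The plan is to show uniform convergence on each closed disk $\overline{D_R}$ ($R>0$) by a direct term-by-term comparison of $f_m$ with the Taylor series of $-\sin$. Write
\[
-\sin z = -\sum_{k=1}^{\infty}\frac{(-1)^{k+1}z^{2k-1}}{(2k-1)!},
\]
and decompose the difference as
\[
f_m(z) - (-\sin z) = \underbrace{\sum_{k=1}^{2m}\frac{(-1)^{k+1}z^{2k-1}}{(2k-1)!}\left(1 - \frac{4m+1}{2(k+2m)}\right)}_{=: A_m(z)}  \ +\ \underbrace{\sum_{k=2m+1}^{\infty}\frac{(-1)^{k+1}z^{2k-1}}{(2k-1)!}}_{=: B_m(z)}.
\]

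The key observation for $A_m$ is the exact identity
\[
1 - \frac{4m+1}{2(k+2m)} = \frac{2k-1}{2(k+2m)},
\]
which is nonnegative for $k\ge 1$ and bounded by $\frac{2k-1}{4m}$. Hence for $|z|\le R$,
\[
|A_m(z)| \le \sum_{k=1}^{2m}\frac{R^{2k-1}}{(2k-1)!}\cdot \frac{2k-1}{4m} = \frac{1}{4m}\sum_{k=1}^{2m}\frac{R^{2k-1}}{(2k-2)!} \le \frac{R\cosh R}{4m},
\]
which tends to $0$ uniformly in $z$ as $m\to+\infty$.

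For $B_m$, the tail of an entire series with infinite radius of convergence, the standard estimate gives
\[
|B_m(z)| \le \sum_{k=2m+1}^{\infty}\frac{R^{2k-1}}{(2k-1)!} \longrightarrow 0 \quad (m\to+\infty),
\]
again uniformly on $\overline{D_R}$. Combining the two bounds yields $\sup_{|z|\le R}|f_m(z)+\sin z|\to 0$, which is the claim for the arbitrary compact subset $\overline{D_R}$ (a general compact set is contained in some such disk). I expect no serious obstacle: the only delicate point is recognizing the clean algebraic identity for $1-\frac{4m+1}{2(k+2m)}$, after which both pieces are controlled by elementary power series estimates.
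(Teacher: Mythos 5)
Your proof is correct and matches the paper's approach: the paper introduces the Maclaurin partial sum $\widetilde f_m(z):=-\sum_{k=0}^{2m-1}\frac{(-1)^k z^{2k+1}}{(2k+1)!}$ and uses the triangle inequality to split $|f_m-(-\sin)|\le|f_m-\widetilde f_m|+|\widetilde f_m-(-\sin)|$, which is exactly your $|A_m|+|B_m|$, and it bounds $|f_m-\widetilde f_m|$ by the same $\frac{\vr\cosh\vr}{4m}$ you obtain. Your explicit observation that $1-\frac{4m+1}{2(k+2m)}=\frac{2k-1}{2(k+2m)}$ is the same cancellation the paper exploits (written there, after reindexing, as the factor $\frac{2k+1}{2k+2+4m}$).
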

\begin{proof}
Let us fix some $\vr>0$. We know that the sequence of 
Maclaurin polynomials 
$\displaystyle \widetilde{f}_m(z):= -\sum_{k=0}^{2m-1} \frac{(-1)^k z^{2k+1}}{(2k+1)!}$ converges
uniformly to $-\sin$ on $D_\vr$ as $m\to +\infty$, so, by using the triangle inequality, it is
enough to show that
\[
\lim_{m\to +\infty}\sup\left\{\left|f_m(z)-\widetilde{f}_m(z)\right| : z\in D_\vr \right\}=0.
\]
But
\begin{equation}\label{Lemma54firstinequality}
\left|f_m(z)-\widetilde{f}_m(z)\right|=
\left|\sum_{k=0}^{2m-1} \frac{(-1)^k z^{2k+1}}{(2k+1)!}\cdot\frac{2k+1}{2k+2+4m}\right|\le
\end{equation}
\[
|z|\sum_{k=0}^{2m-1} \frac{|z|^{2k}}{(2k)!}\cdot\frac{1}{2k+2+4m}\le 
\frac{\vr}{4m}\sum_{k=0}^{\infty} \frac{\vr^{2k}}{(2k)!}=\frac{\vr \cosh \vr}{4m}\to 0,
\]
as $m\to +\infty$.
\end{proof}

\begin{rem}\label{slowconvergencelemma}
Notice that the first inequality in (\ref{Lemma54firstinequality}) holds with equality for purely
imaginary $z$ values, and $\vr \cosh \vr$ is large when $\vr$ is large. These already 
suggest---together with numerical computations---that the uniform convergence of $f_m$ to $-\sin$
on $D_\vr$ is rather slow, compared to the uniform convergence of $\widetilde{f}_m$ to $-\sin$.
\end{rem}

\begin{figure}
\begin{center}
%\subfigure[\label{verticalboundaryn6}]{
%\includegraphics[width=0.45\textwidth]{n20oscillations.pdf}}
\includegraphics[width=0.65\textwidth]{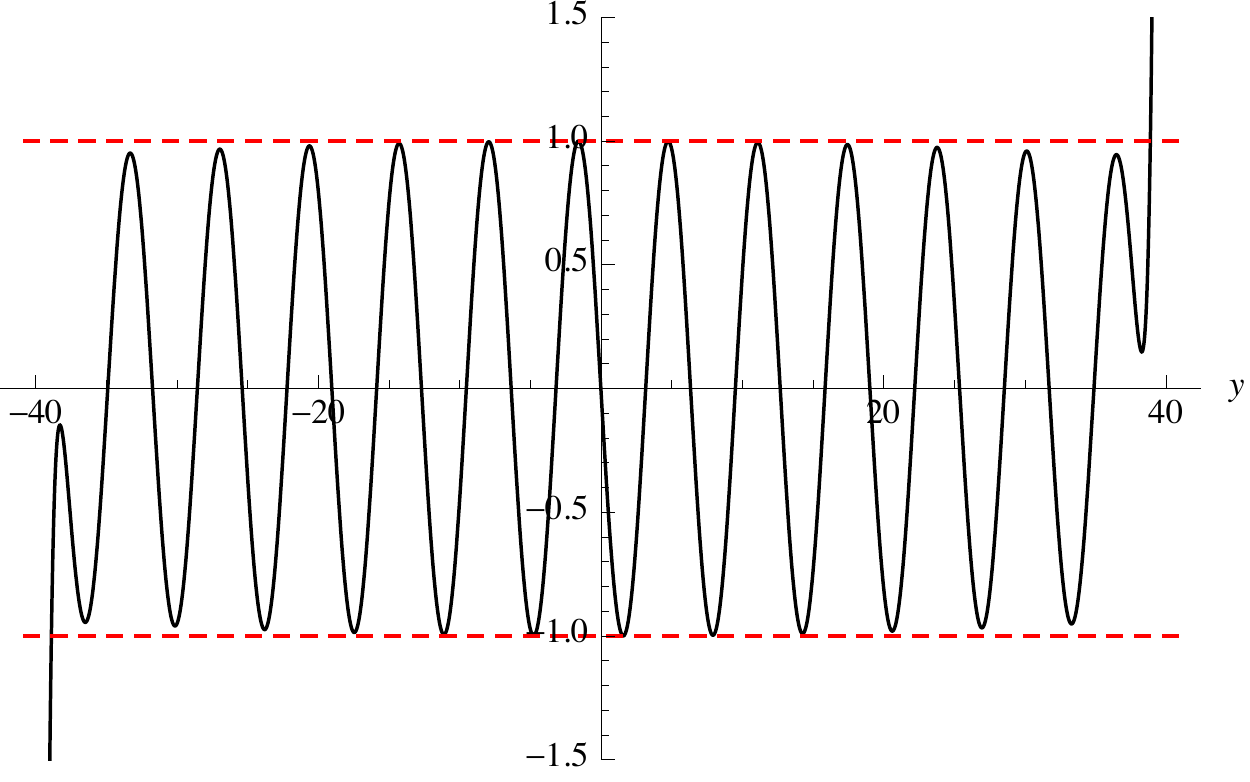}
%\subfigure[\label{verticalboundaryn301}]{
%\includegraphics[width=0.4\textwidth]{n301oscillations.pdf}}
\caption{The graph of $\widetilde{E}_{100}$
 over the interval $\left[-\frac{n}{e}-4,\frac{n}{e}+4\right]$.
\label{regularoscillations}}
\end{center}
\end{figure}

%Since the sign of $E_n$ governs numerical stability along the imaginary axis,
%and since $f_m$ in the preceding lemma has the same sign as $E_{4m}$, the Lemma 
%leads one to expect the behavior described in Observation 2 above.

The uniform convergence of $f_m$ to $-\sin$ on compact subsets of $\Complex$ implies that the roots of $f_m$ converge
to the roots of $-\sin$. Let us elaborate on this in the following theorem to finish our explanation of
$\textbf{O}_2$ in the $n=4m$ case. 
The theorem expresses the fact that for any fixed $k^*\in\mathbb{N}^+$ 
there exists an index $m_0(k^*)$ such that for each 
$m>m_0(k^*)$ we have $k_0(4m)\ge k^*$ in decomposition (\ref{Vdecomposition}), and
the sequence of intervals $I_{4m,k^*}\subset \Vv_{4m}$ converges to the interval $[ (2k^*-2)\pi, (2k^*-1)\pi]$
as $m\to +\infty$. We essentially repeat the proof of Hurwitz's theorem via Rouch\'e's theorem as given in \cite[p.~119]{titchmarsh} (\textit{cf.} \cite[pp.~374--375]{vargacarpentercossin2}).

\begin{thm}\label{regularoscillationstheorem}
For any fixed $k^*\ge 1$,
\[\lim_{m\to +\infty} \min I_{4m,k^*}= (2k^*-2)\pi \quad \text{and} \quad 
\lim_{m\to +\infty} \max I_{4m,k^*}=(2k^*-1)\pi.\] 
\end{thm}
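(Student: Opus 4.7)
The plan is to apply Rouch\'e's theorem (i.e.\ the mechanism behind Hurwitz's theorem) to $f_m$ and $-\sin$, using the uniform convergence in Lemma \ref{uniformconvergencelemma} to locate a real zero of $f_m$ near each zero $k\pi$ of $-\sin$, and then carry out a sign analysis to identify $I_{4m,k^*}$ with an explicit interval between two such zeros. Fix $k^*\ge 1$ and any $\eta\in(0,\pi/2)$. Set $R:=(2k^*-1)\pi+\eta$ and consider the pairwise disjoint closed disks $D_\eta(k\pi)\subset D_R$ for $k=0,1,\ldots,2k^*-1$. On the compact union of circles $\bigcup_{k=0}^{2k^*-1}\partial D_\eta(k\pi)$ the continuous, non-vanishing function $|{-\sin z}|$ attains some positive minimum $\delta>0$. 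By Lemma \ref{uniformconvergencelemma}, there exists $M_1$ with $|f_m(z)-(-\sin z)|<\delta$ uniformly on $D_R$ for $m\ge M_1$. Rouch\'e's theorem then gives exactly one zero (counted with multiplicity) of $f_m$ inside each $D_\eta(k\pi)$ for $k=0,1,\ldots,2k^*-1$.

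Because $f_m$ has real coefficients, its non-real zeros occur in conjugate pairs; since each $D_\eta(k\pi)$ is invariant under conjugation, its unique zero $\alpha_{m,k}$ must in fact be real, hence $\alpha_{m,k}\in(k\pi-\eta,k\pi+\eta)$. Note $\alpha_{m,0}=0$ automatically since $f_m(0)=0$. Next, on the compact set $K:=[0,R]\setminus\bigcup_{k=0}^{2k^*-1}(k\pi-\eta,k\pi+\eta)$ the function $|{-\sin y}|$ attains a positive minimum $c$, so for some $M_2\ge M_1$ we have $|f_m(y)-(-\sin y)|<c$ on $K$ for all $m\ge M_2$, forcing $\mathrm{sgn}\,f_m(y)=\mathrm{sgn}(-\sin y)$ on $K$. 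Combined with the fact that the only zero of $f_m$ in $(k\pi-\eta,k\pi+\eta)$ is $\alpha_{m,k}$, a continuity argument then pins down the sign of $f_m$ on each subinterval $[\alpha_{m,k},\alpha_{m,k+1}]\setminus\{\alpha_{m,k},\alpha_{m,k+1}\}$: since $-\sin y\le 0$ exactly on $[2j\pi,(2j+1)\pi]$ for $j\ge 0$, it follows that $f_m\le 0$ on $[\alpha_{m,2j},\alpha_{m,2j+1}]$ for $j=0,\ldots,k^*-1$, while $f_m>0$ on every $(\alpha_{m,2j+1},\alpha_{m,2j+2})$ and on $(\alpha_{m,2k^*-1},R]$.

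Consequently, for all $m\ge M_2$, the portion of $\Vv_{4m}=\{y\ge 0:f_m(y)\le 0\}$ lying in $[0,R]$ consists of exactly $k^*$ compact intervals $[\alpha_{m,2j},\alpha_{m,2j+1}]$ ($j=0,\ldots,k^*-1$). By the ordering in (\ref{Vdecomposition}), the $k^*$-th of these intervals is $I_{4m,k^*}=[\alpha_{m,2k^*-2},\alpha_{m,2k^*-1}]$, whose endpoints satisfy $|\alpha_{m,2k^*-2}-(2k^*-2)\pi|<\eta$ and $|\alpha_{m,2k^*-1}-(2k^*-1)\pi|<\eta$. Since $\eta>0$ was arbitrary, both claimed limits follow. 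The two places I expect to require the most care are not Rouch\'e itself but (i) ruling out the possibility that some interval of $\Vv_{4m}$ hides in a gap between neighboring disks $D_\eta(k\pi)$, which is handled by the sign analysis on $K$, and (ii) confirming that the zeros produced by Rouch\'e are real rather than non-real conjugate pairs collapsing inside one disk; both rely essentially on $f_m$ having real coefficients and on the disks being centered on the real axis.
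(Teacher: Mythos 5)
Your proof is correct and follows essentially the same route as the paper: uniform convergence (Lemma \ref{uniformconvergencelemma}) plus Rouch\'e's theorem to place one real simple zero of $f_m$ in each small disk centered at a multiple of $\pi$, followed by a sign analysis to identify $I_{4m,k^*}$. The only cosmetic differences are that you rule out extra zeros on $[0,R]$ by comparing signs with $-\sin$ on the complement $K$ (the paper instead counts all $4k^*-1$ zeros in the big disk via a second application of Rouch\'e), and you let the disk radius $\eta$ be arbitrary from the outset rather than shrinking it afterwards.
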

\begin{proof}
We set $\vr:=(2k^*-1)\pi +\pi/3$, $\varepsilon:=\pi/4$ and
%$(2k^*-1)\pi < \vr < 2k^* \pi$. 
%The endpoints of the intervals $I_{n,k}$ are the real roots of $E_n(y)$ or equivalently
%of $f_m(y)$ defined in \eqref{scaledE}.
% Let us fix an $\varepsilon$ with $0<\varepsilon<\min\{\vr-(2k-1)\pi,\pi/2\}$, and 
let 
$\eta\equiv \eta(\vr,\varepsilon):=\min\{ |-\sin(z)| : z\in \partial D_\vr\cup 
\partial D_{\varepsilon}\}$. Then for any $\ell\in\mathbb{Z}$, 
 $0<\eta\le \min\{ |-\sin(z)| : z\in \partial D_{\varepsilon}(\ell \pi)\}$.
By Lemma \ref{uniformconvergencelemma}, there is an index $m_0(\vr,\varepsilon)>0$
such that for all $m>m_0(\vr,\varepsilon)$ we have 
\[
|f_m-(-\sin)|<\eta\le |-\sin| \quad \mathrm{on}\quad \partial D_\vr\cup \bigcup_{\ell=-2k^*+1}^{2k^*-1}\partial D_\varepsilon(\ell \pi).
\]
So let us fix $m>m_0(\vr,\varepsilon)$ arbitrarily. We show that 
the decomposition (\ref{Vdecomposition}) of $\Vv_{4m}$ consists of at least $k^*$ 
intervals---that is, $k_0(4m)\ge k^*$---and 
$\max I_{4m,k^*}< \vr$.

Indeed, Rouch\'e's theorem  
asserts, on one hand, that both $-\sin$ and $f_m$ have the same number of zeros
(counted with multiplicity) 
in $D_\vr$, that is, the number of zeros is $4k^*-1$ for both functions; on the other hand, for each 
$\ell\in[-2k^*+1, 2k^*-1]\cap\mathbb{Z}$ they have the same number of zeros  
 in $D_\varepsilon(\ell \pi)$ as well, that is, they both have a unique zero in each such disk.
Since $f_m$ has real coefficients, this means that $f_m$
has a unique real zero in $D_\varepsilon(\ell \pi)$ for each 
$\ell\in[-2k^*+1, 2k^*-1]\cap\mathbb{Z}$, also implying that 
all complex zeros of $f_m$ in $D_\vr$ are real and simple. 
But $f_m(0)=0$, so the number of zeros of $f_m$ in the interval $(0,\vr)$ is exactly 
$2k^*-1$. Let $0<y_{4m,1}<y_{4m,2}<\ldots<y_{4m,2k^*-1}<\vr$ denote these zeros of $f_m$, and set $y_{4m,0}:=0$. 
 By the fact that zeros of $f_m$ are simple, and by taking into account Corollary \ref{corollary51} and
(\ref{fmroots}),
we have that $f_m(y)\le 0$ for $y\in [y_{4m,0}, y_{4m,1}]$,
and hence $f_m(y)\le 0$ for $y\in \bigcup_{k=0}^{k^*-1} [y_{4m,2k}, y_{4m,2k+1}]$, and
 $f_m(y)>0$ for $y\in \left(\bigcup_{k=1}^{k^*-1} (y_{4m,2k-1}, y_{4m,2k})\right)\cup 
(y_{4m,2k^*-1},\vr]$. 
In other words, $\Vv_{4m}\cap [0, \vr]=\bigcup_{k=0}^{k^*-1} [y_{4m,2k}, y_{4m,2k+1}]$.

Finally, by repeating the above argument with some $\varepsilon_j \to 0^+$
($\varepsilon_j\le \varepsilon$, $j=1, 2, \ldots$) instead of $\varepsilon$, the convergence of the
endpoints of the $I_{4m,k^*}$ intervals to the corresponding multiples of $\pi$
as $m\to +\infty$ is also established. 
\end{proof}

\begin{rem}
The convergence of the endpoints of the intervals $I_{4m,k^*}$ 
(as $m\to +\infty$ and $k^*$ is fixed)
to the
corresponding multiples of $\pi$ seems to be monotone if the first few 
($I_{4, k^*}, I_{8, k^*}, \ldots$) intervals
are ignored,
but we did not prove this. Moreover, Remark 
\ref{slowconvergencelemma} indicates why this convergence is 
relatively slow. %Finally, it is clear that the above proof could be used to derive some
%explicit formula for the index $n_0(\vr)$.
\end{rem}

\begin{rem}
It is interesting to apply the notions of order stars \cite{Wanner1978} to the current setting. 
By \cite[Proposition 3]{Wanner1978}, we know that near the origin, the order
star for $\Pp_n$ and its complement each consist of $n+1$ alternating
sectors of equal angular size.  This leads easily to a proof of Corollary \ref{corollary51}.
Meanwhile, \cite[Propositions 2 and 4]{Wanner1978} indicate that the order star
for $\Pp_n$ has $n$ bounded dual fingers, each containing one zero of $\Pp_n$.
The dual fingers correspond (near the origin) to the sectors belonging to the
complement of the order star, so approximately half of them start in the right
half-plane.  But according to Szeg\H o, more than half of the zeros of $\Pp_n$
lie in the left half plane.  This means that a certain fraction of the dual fingers
must cross the imaginary axis.  These crossings correspond to the gaps in $\Vv_n$.
If one supposes that each crossing and each gap between crossings have equal length,
one obtains that the width of each finger where it crosses the imaginary axis
must be $\pi$.
\end{rem}

%With natural modifications of the proof of Theorem \ref{regularoscillationstheorem}, one obtains the following results for the other three congruence classes (see again Figures \ref{checkerboard20}, \ref{checkerboard} and \ref{regularoscillations}).  
%For $n\equiv 1\mod 4$, $n\equiv 2\mod 4$, and $n\equiv 3\mod 4$, the polynomials 
%$\frac{(n+1)!}{2y^{n+1}}\left(\left|\sum_{k=0}^n \frac{(iy)^k}{k!}\right|^2-1\right)$
%(that is, the counterparts of (\ref{scaledE})) converge uniformly to the $\cos$, $\sin$ and 
%$-\cos$ functions, respectively, on compact sets, 
%and the endpoints of the corresponding intervals $I_{n,k}$ converge (as $n\to +\infty$ with $k$ fixed)  to the successive positive roots of the corresponding limit function $\cos$, $\sin$ or 
%$-\cos$.\\

%{\bf Need to rephrase this part...}
As for $\textbf{O}_3$, we notice that the upper bound 
$\frac{n}{e}+\frac{\ln n}{2 e }+1.2604$
 (or its possibly 
modified version for $n>100$) is an upper bound on the largest
positive root of $\widetilde{E}_n$. Since these polynomials are uniformly close
to $\pm \sin$ or $\pm \cos$ on compact sets of $\Complex$ for large $n$ values (Lemma \ref{uniformconvergencelemma}),
it is reasonable to expect (but we do not investigate
this further here either) that the analysis presented in \cite{vargacarpentercossin2} is applicable in the current situation as well, at least for $n$ large enough: compare our Figure \ref{complexrootsLemma51_n160}
with \cite[Figures 1.1 and 1.2]{vargacarpentercossin2}, or our expression
$\frac{n}{e}+\frac{\ln n}{2 e }+1.2604$ with Szeg\H o's asymptotic result \cite[formula (1.12)]{vargacarpentercossin2} and its improvement \cite[Section 5]{vargacarpentercossin2}.\\

\begin{figure}[h!]
  \centering
  \includegraphics[width=2.5in]{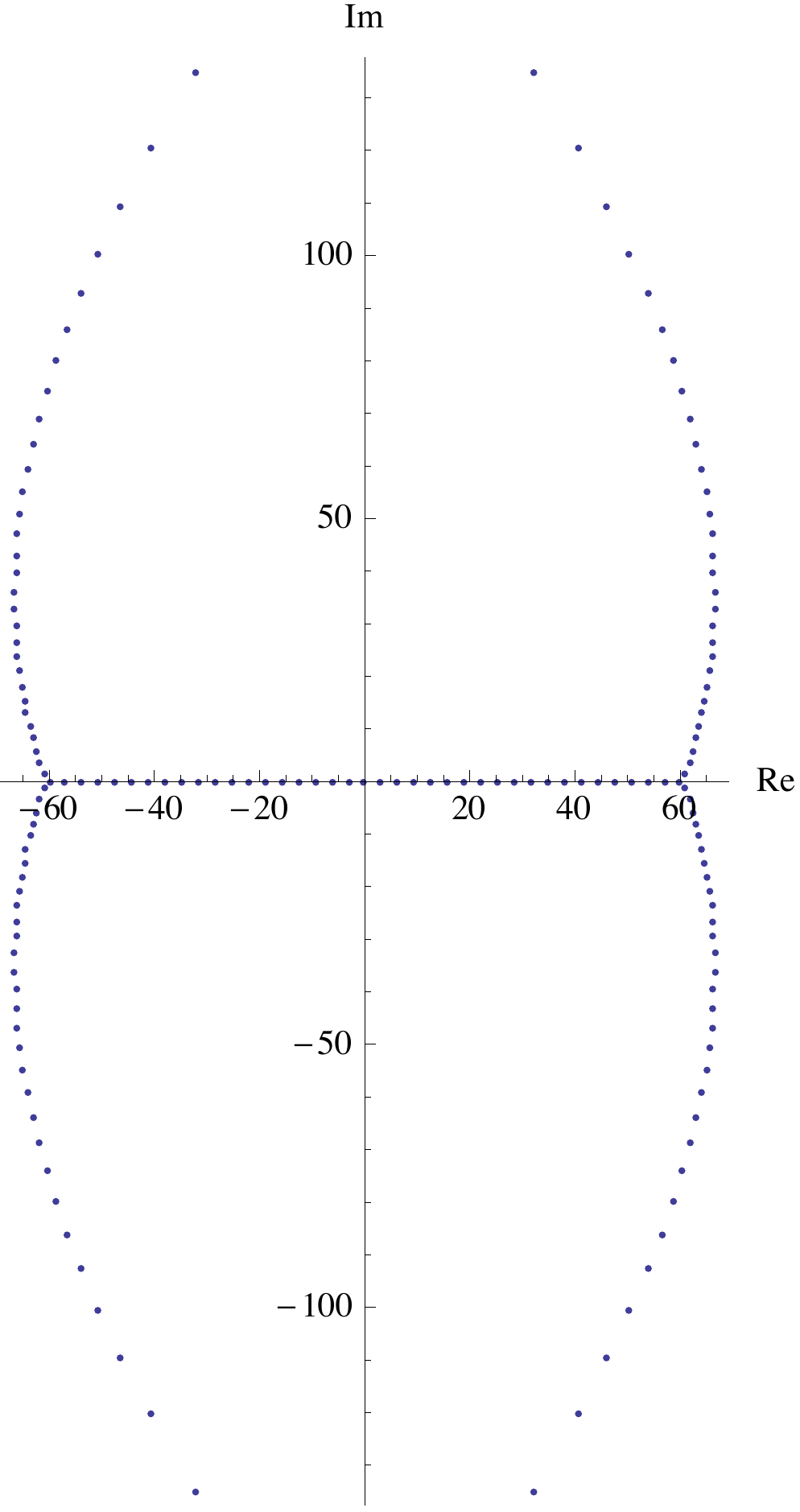}
  \caption{The complex zeros of the polynomial $f_{40}$ defined in  (\ref{fmdef}).}\label{complexrootsLemma51_n160}
\end{figure}

Let us close the section by presenting some further observations based on
computations. 
In the numerical integration of oscillatory problems, one is interested
in the size of the component of $\Vv_n$ that is connected to the origin.
The largest connected component of $\Vv_n$ ($1\le n \le 100$) containing 0 occurs for $n=8$,
when $\Vv_8 = [0, y_{8,1}]$ with $y_{8,1}\approx 3.3951402205749$; the
largest scaled component (\textit{i.e.}, when the $n^\mathrm{th}$ column of Figure \ref{checkerboard}
is shrunk by a factor $n$) occurs for $n=4$.  For large $n$, the connected component of $\Vv_n$ ($1\le n \le 100$) containing 0
remains small, but the boundary of $\Us_n$ lies very close to the imaginary axis and the
magnitude of the amplification factor along the imaginary axis is only slightly
greater than unity (indeed, indistinguishable in double precision) over a
relatively large interval.
We can exactly determine the distance from the boundary of $\Us_n$ to the imaginary axis
for a given $n$ with \textit{Mathematica}: as an
illustration, we chose $n=6$ (Figure \ref{verticalboundaryn6}), $n=20$
(Figure \ref{verticalboundaryn20}) and $n=100$ (Figure \ref{figverticalboundaryn100}). 
Below we
describe the technique we used to create these figures.

For a given $y\ge 0$, 
the real solutions $x$ of 
$\left|\sum_{k=0}^{n} \frac{(x+i y)^k}{k!}\right|\le 1$ are computed and the
  solution with the smallest absolute value is denoted by $x_n^{\min}(y)$. In \textit{Mathematica},
the function $y\mapsto x_n^{\min}(y)$ (where $y$ runs over some interval) can be represented as a piecewise defined function composed of 
{\texttt{Root}} objects.  However, the $x_n^{\min}(\cdot)$ function typically spans several orders of magnitude,
for example, 
\[x_{100}^{\min}(0.1)\approx 10^{-262}, \, 
x_{100}^{\min}(1)\approx 9\cdot 10^{-161}, \ 
x_{100}^{\min}(10)\approx -5\cdot 10^{-60},\]
\[
x_{100}^{\min}(20)\approx 2\cdot 10^{-29}, \, 
x_{100}^{\min}(30)\approx -1.5 \cdot 10^{-11},\ 
x_{100}^{\min}(38.1)\approx -0.639.\]
We add that any approximate real number above can exactly be represented as a root of an integer polynomial of degree 200, whose coefficients 
can typically be written altogether by approximately 76000 digits. 
So to display the graph of $x_n^{\min}(\cdot)$ in a meaningful way, some scaling has to be applied.
For fixed $n$ values, Figures \ref{figverticalboundaryn620} and 
\ref{figverticalboundaryn100} actually display the curves
\begin{equation}\label{inverselogarithmicscale}
y\mapsto\mathrm{sign}(x_n^{\min}(y))\cdot\frac{-1}{\log_{10}|x_n^{\min}(y)|}
\end{equation} 
with $y$ values measured along the vertical axis (corresponding to the imaginary axis)
and function values along the horizontal one (corresponding to the real axis).
Since for each $n$ and $y$ value we now have $|x_n^{\min}(y)|<1$, the
sign correction ensures that
a point on the figure is in the open left half-plane (or right half-plane) if and only if 
 $\mathrm{sign}(x_n^{\min}(y))=-1$ (or $1$).
The intersection points of the graph of $x_n^{\min}(\cdot)$ and the vertical axis 
correspond to the endpoints of the shaded intervals in Figure \ref{checkerboard}.
The curve segments bounded by
the vertical red dashed lines (placed at $\pm 10^{-16}$) correspond to stability region boundaries
that are ``invisible" by using machine precision.
As for the vertical (black or red) dotted lines in Figure \ref{figverticalboundaryn100}, 
they measure the amplitude of the oscillations (that is, the local extrema) of (\ref{inverselogarithmicscale}) in the interval $[-10^{-16}, 10^{-16}]$, and are found approximately at
 \[
-10^{-19}, -10^{-30},
-10^{-45}, -10^{-72}, 10^{-90}, 10^{-55}, 10^{-38}, 10^{-24}.
\]

\begin{figure}
\begin{center}
%\subfigure[\label{verticalboundaryn6}]{
%\includegraphics[width=2.64in]{verticalboundaryn6.pdf}}
%\subfigure[\label{verticalboundaryn20}]{
%\includegraphics[width=3.77in]{verticalboundaryn20.pdf}}
\subfigure[\label{verticalboundaryn6}]{
\includegraphics[width=2.4in]{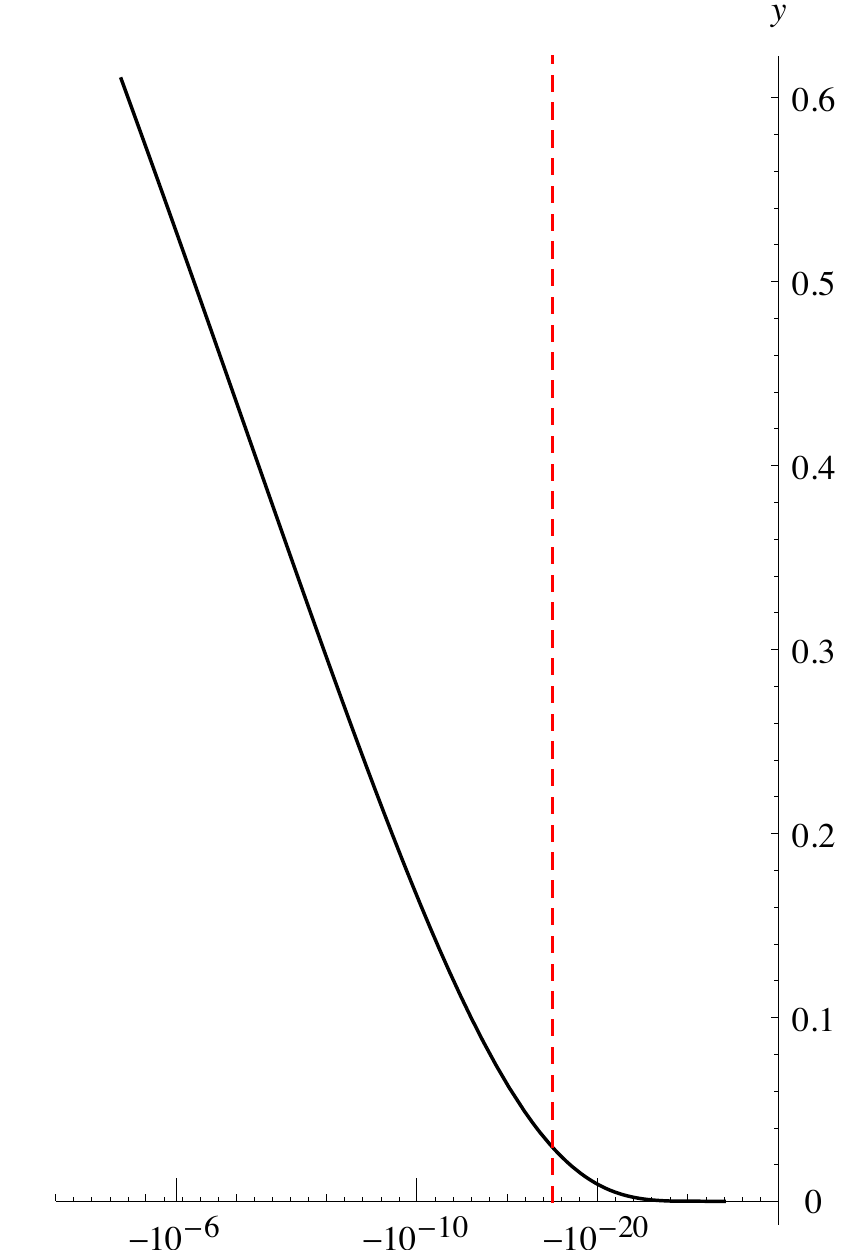}}
\subfigure[\label{verticalboundaryn20}]{
\includegraphics[width=4.8in]{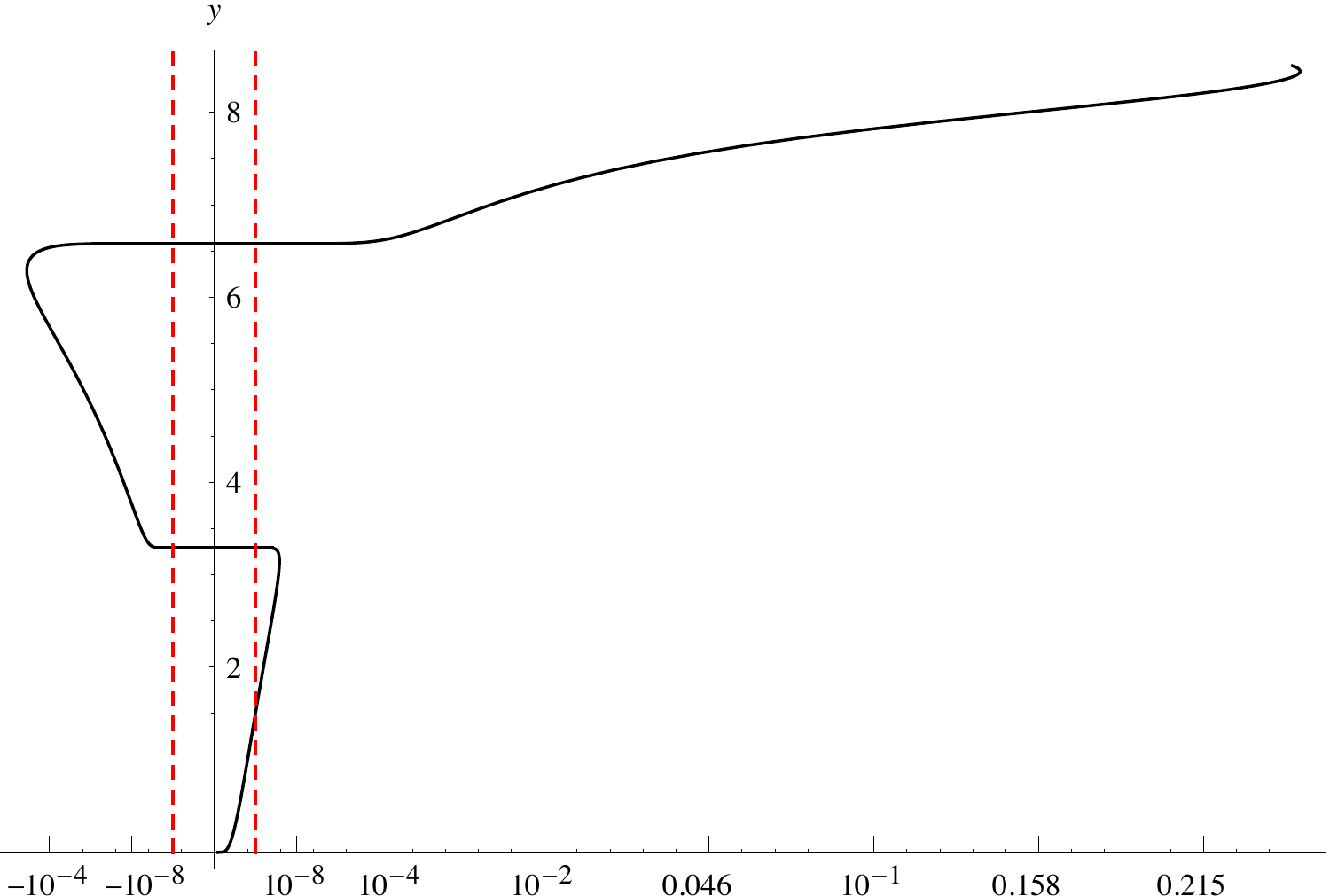}}
\caption{The 
%distance between the imaginary axis and the
boundary curve of $\Us_n$ closest to 
%this 
the imaginary axis, on an ``inverse 
logarithmic scale", for (a) $n=6$ and (b) $n=20$, see (\ref{inverselogarithmicscale}).}
\label{figverticalboundaryn620}
\end{center}
\end{figure}

\begin{figure}[h!]
  \centering
  \includegraphics[width=5.5in]{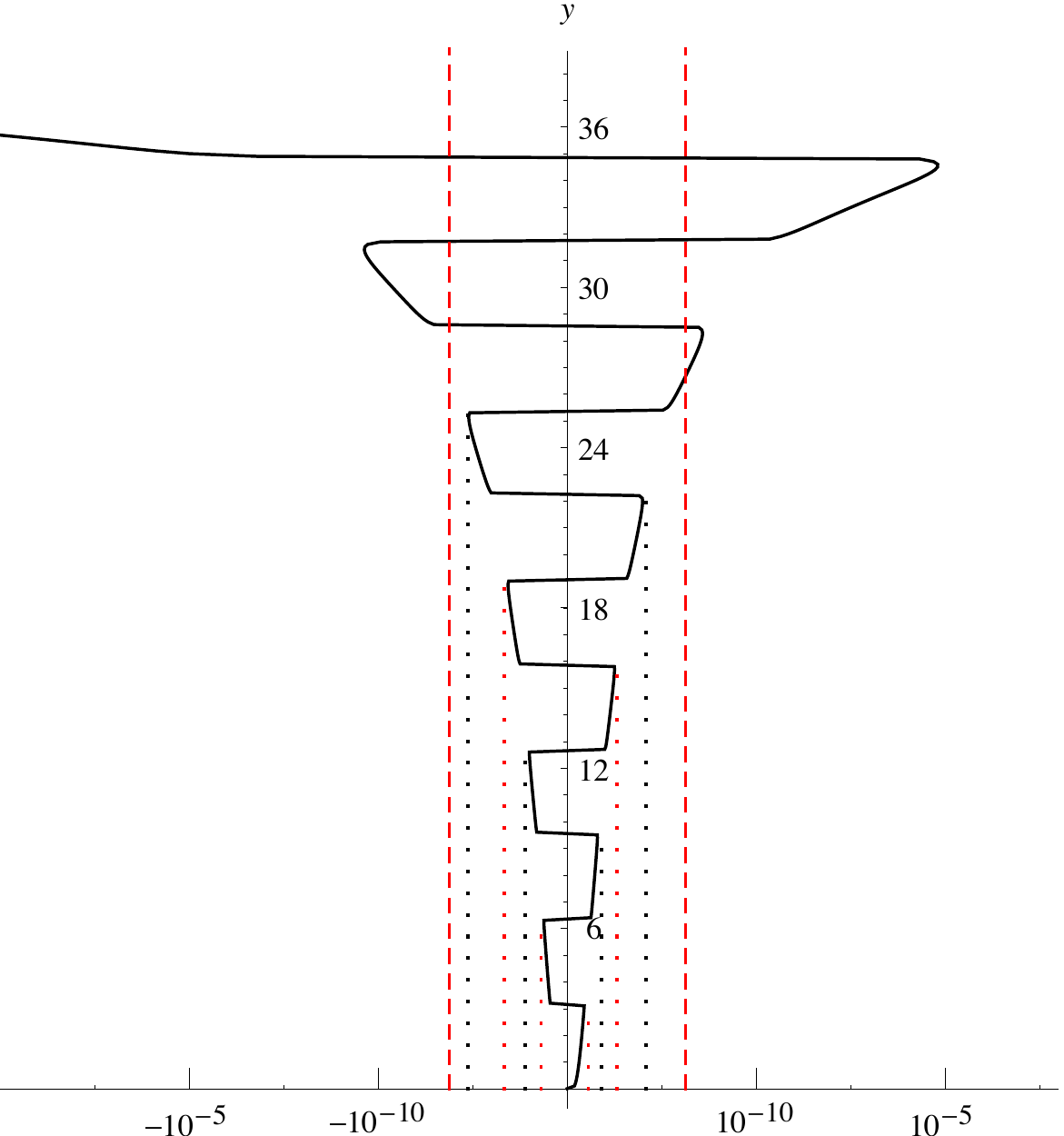}
  \caption{The 
%distance between the imaginary axis and the
boundary curve of $\Us_{100}$ closest to the imaginary axis, on an ``inverse
logarithmic scale", see (\ref{inverselogarithmicscale}). \label{figverticalboundaryn100}}
\end{figure}

\section{Semi-disks contained in $\Ss_n$ in the left half-plane}\label{sec:semidisksection}

Results of this section are formulated mostly in terms of the scaled stability region $\Ss_n$.

First we prove a theorem giving a necessary and sufficient condition for a small enough semi-disk
in the left half-plane and centered at the origin to be contained in $\Ss_n$.
Then we present the radius of the maximal such semi-disks for $n\le 20$. 
Finally we show some figures concerning the radial slices of $\Ss_n \cap \{\Re\le 0\}$.

\begin{thm} Let $n$ be a positive integer. Then 
\[
 \exists \vr>0 : \quad D_{\vr}\cap \{\Re\le 0\} \subset \Ss_n
\]
if and only if $n\equiv 0 \mod 4$ or $n\equiv 3\mod 4$.
\end{thm}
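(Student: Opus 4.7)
Let $h(x,y) := |\Pp_n(x+iy)|^2 - 1$, which is a polynomial in $x,y$ with real coefficients, and is even in $y$. My plan is to combine a standard implicit-function-theorem argument near the origin with the sign information on $h$ along the imaginary axis that is already available from Corollary~\ref{corollary51} and Lemma~\ref{lemmaalongtheimaginaryaxis}.

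First, I observe that $\Pp_n(0)=1$ and $\Pp_n'(0)=n$, so differentiating $|\Pp_n(z)|^2 = \Pp_n(z)\,\Pp_n(\bar z)$ at $z=0$ gives $\partial_x h(0,0) = 2n > 0$ and $\partial_y h(0,0) = 0$ (the latter by evenness in $y$). By continuity I fix a closed rectangle $R := [-a,a]\times[-b,b]$ on which $\partial_x h > 0$; thus $x \mapsto h(x,y)$ is strictly increasing on $R$ for each fixed $y \in [-b,b]$. The implicit function theorem then produces a smooth function $\psi : [-b,b] \to [-a,a]$ (shrinking $b$ if necessary) with $\psi(0)=0$ and $h(\psi(y),y) = 0$, and the monotonicity in $x$ yields the clean equivalence $h(x,y) \le 0 \Leftrightarrow x \le \psi(y)$ throughout $R$.

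Next, I note the identity $h(0,y) = E_n(ny)$, so the sign of $h$ on the imaginary axis is controlled by $E_n$ via Lemma~\ref{lemmaalongtheimaginaryaxis}. Reading off the sign of the leading non-zero term in each of the four congruence classes, one obtains some $\tilde\vr>0$ such that, for $0 < |y| \le \tilde\vr$: if $n\equiv 0$ or $3 \mod 4$, then $h(0,y) < 0$ strictly; while if $n\equiv 1$ or $2 \mod 4$, then $h(0,y) > 0$ strictly. (This is essentially the content of Corollary~\ref{corollary51}, sharpened to strict inequalities, which its proof already provides.) In the first case, monotonicity of $h$ in $x$ forces $\psi(y) > 0$ for all $0<|y|\le \min(b,\tilde\vr)$. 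Setting $\vr := \min(a,b,\tilde\vr)$, any $z = x+iy \in D_\vr\cap\{\Re\le 0\}$ lies in $R$ and satisfies $x \le 0 \le \psi(y)$, whence $h(x,y)\le 0$ and $z \in \Ss_n$; this proves $D_\vr\cap\{\Re\le 0\}\subset \Ss_n$. In the second case, any candidate semi-disk $D_\vr\cap\{\Re\le 0\}$ contains the point $iy$ for arbitrarily small $0 < y < \min(\vr,\tilde\vr)$, where $h(0,y) > 0$ implies $iy \notin \Ss_n$; so no such $\vr$ can exist.

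The only piece with any mathematical content is the strict sign of $h(0,y) = E_n(ny)$ for $0<|y|$ small, which follows from inspecting the leading non-zero term of the explicit formula in Lemma~\ref{lemmaalongtheimaginaryaxis}; everything else is a routine implicit-function computation, so I do not foresee any serious obstacle.
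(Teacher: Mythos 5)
Your proof is correct. The overall strategy coincides with the paper's: the ``only if'' direction is read off from the sign of $E_n$ on the imaginary axis (Corollary~\ref{corollary51}), and the ``if'' direction reduces the semi-disk inclusion to the imaginary-axis statement of Corollary~\ref{corollary51} via a local monotonicity argument. The technical means differ: the paper parametrizes in polar coordinates and shows, from $\partial_\vf\bigl|\textstyle\sum_{k\le n}(re^{i\vf})^k/k!\bigr|^2 = (r\sin\vf)(-2+rQ_n)$, that $\vf\mapsto|\cdot|^2$ is non-increasing on $[\pi/2,\pi]$ for small $r$, so the maximum over each small left semicircle is attained on the imaginary axis; you instead work in Cartesian coordinates, observe $\partial_x h(0,0)=2n>0$, invoke the implicit function theorem to represent $\partial\Ss_n$ near $0$ as a graph $x=\psi(y)$ with $\psi(0)=0$ and $\psi(y)>0$ for small $y\ne0$, and conclude $\{x\le0\}\subset\{x\le\psi(y)\}$ locally. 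Both arguments hinge on exactly the same fact --- the strictly positive $x$-derivative of $|\cdot|^2$ at the origin, visible in the paper's expansion $1+2x+P_n(x,y)$ --- so the difference is one of packaging rather than substance. Your Cartesian/IFT formulation is perhaps slightly cleaner because it makes the role of the transversality condition $\partial_x h(0,0)>0$ explicit and dispenses with the auxiliary polynomial $Q_n$; the paper's polar version is closer in spirit to the radial-slice language used elsewhere in Section~\ref{sec:semidisksection}.
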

\begin{proof}
Corollary \ref{corollary51} establishes that 
$D_{\vr}\cap \{\Re\le 0\} \not \subset \Ss_n$ for any $\vr>0$ and
 $n\equiv 1 \mod 4$ or $n\equiv 2\mod 4$, so to finish the proof  we show
that a sufficiently small closed left semi-disk centered at the origin is contained in
$\Ss_n$ for $n\equiv 0 \mod 4$ or $n\equiv 3\mod 4$.

To this end we fix an $n\ge 1$ with $n\equiv 0 \mod 4$ or $n\equiv 3\mod 4$ and notice that
\[
\left|\sum_{k=0}^n \frac{(x+i y)^k}{k!}\right|^2=1+2x+P_n(x,y),
\] 
where $x, y\in\Real$ and the polynomial $P_n$ has the form
\[
P_n(x,y)=\sum_{k\ge 0, \,\ell\ge 0,\, k+\ell\ge 2, \, \ell \text{ is even}} a_{k,\ell}\  x^k y^\ell
\]
with some real coefficients $a_{k,\ell}$. Then with 
 $x=r\cos\vf$ and $y=r\sin\vf$ ($r\ge 0$, 
$\frac{\pi}{2}\le \vf\le \pi$; due to symmetry, only the upper left quadrant is considered) we have
\[
\partial_\vf \left( \left|\sum_{k=0}^n \frac{(r\exp(i \vf))^k}{k!}\right|^2\right)=
(r\sin\vf)  \left(-2+r\, Q_n(r,\cos\vf,\sin\vf)\right),
\]
where $Q_n$ is a suitable real polynomial in three variables. Consequently, there
exists $r^*_n>0$ such that for any fixed $0\le r \le r^*_n$ the function
\[
\left[ \frac{\pi}{2},\pi\right]\ni \vf \mapsto \left|\sum_{k=0}^n \frac{(r\exp(i \vf))^k}{k!}\right|^2
\]
is non-increasing, hence its maximal value occurs  (for example) at $\vf=\frac{\pi}{2}$. So by the second
part of Corollary \ref{corollary51}, for $|z|\le \min(r^*_n,\vr_n)$ and $\Re (z)\le 0$ we have 
\[
\left|\sum_{k=0}^n \frac{z^k}{k!}\right|^2\le \left|\sum_{k=0}^n \frac{(i |z|)^k}{k!}\right|^2 \le 1.
\]
\end{proof}

For any $3\le n \le 20$ and $n\equiv 0 \mod 4$ or $n\equiv 3\mod 4$,
we have determined the maximal $\vr_n^*>0$ radius as an exact algebraic number such that 
$D_{\vr_n^*}\cap \{\Re\le 0\} \subset \Ss_n$ (see Table \ref{maximalrhonstar}) as follows. 
It is seen from the definition of $\Vv_n$ (and by taking into account the scaling)
that the length of the largest interval in $\Vv_n$ containing $0$ is an upper bound on $n\vr_n^*$. 
Let $y_{n,1}>0$ denote the length of this largest interval. We first exactly determine $y_{n,1}$ with \textit{Mathematica}'s {\texttt{Reduce}} (by locating the smallest positive root of the appropriate polynomial in Lemma \ref{lemmaalongtheimaginaryaxis}), then show (with  {\texttt{Reduce}} again) that no real numbers $x$ and $y$ can satisfy the system
\[
x^2+y^2\le (y_{n,1})^2, \quad x\le 0, \quad \left|\sum_{k=0}^n \frac{(x+ i y)^k}{k!}\right|> 1,
\]
proving that $\vr_n^*={y_{n,1}}/{n}$. 
Interestingly, the above simple approach breaks down for $n=4$: it turns out that $\vr_4^*<
{y_{4,1}}/4$, see Figure \ref{unscaled4}.

\begin{table}[h!]
\begin{center}
    \begin{tabular}{c|c|c|c}
    $n$ & $\vr_n^*$  & $n\vr_n^*$ & \text{The algebraic degree of} $\vr_n^*$\\ \hline
    3 & $\sqrt{3}/3\approx 0.577$ & $\approx 1.732$ & 2 \\
    7 & $\approx 0.252$ & $\approx 1.764$ & 6 \\
    11 & $\approx 0.154$ & $\approx 1.701$ & 10 \\
    15 & $\approx 0.111$  & $\approx 1.668$ & 14\\
    19 & $\approx 0.086$  & $\approx 1.649$ & 18\\ \hline

    4 & $\approx 0.653$ & $\approx 2.615$ & 24 \\
    8 & $\approx 0.424$ & $\approx 3.395$ & 6\\
    12 & $\approx 0.281$ & $\approx 3.379$ & 10\\
    16 & $\approx 0.207$  & $\approx 3.324$ & 14 \\
    20 & $\approx 0.164$  & $\approx 3.290$ & 18 \\ \hline
    \end{tabular}
\caption{For $3\le n\le 20$, the table contains
the maximal $\vr_n^*$ values (whenever they are positive) such that
$D_{\vr_n^*}\cap \{\Re\le 0\} \subset \Ss_n$. Instead of listing
exact algebraic numbers (apart from the first row, and with degrees given in the last column), 
the values of $\vr_n^*$ are rounded down. For convenience, the maximal
inner radius for each unscaled stability region is also given (as $n\vr_n^*$).
The exceptional $n=4$ case is displayed in Figure \ref{unscaled4}.\label{maximalrhonstar}}
\end{center}
\end{table}

\begin{figure}[h!]
  \centering
  \includegraphics[width=3in]{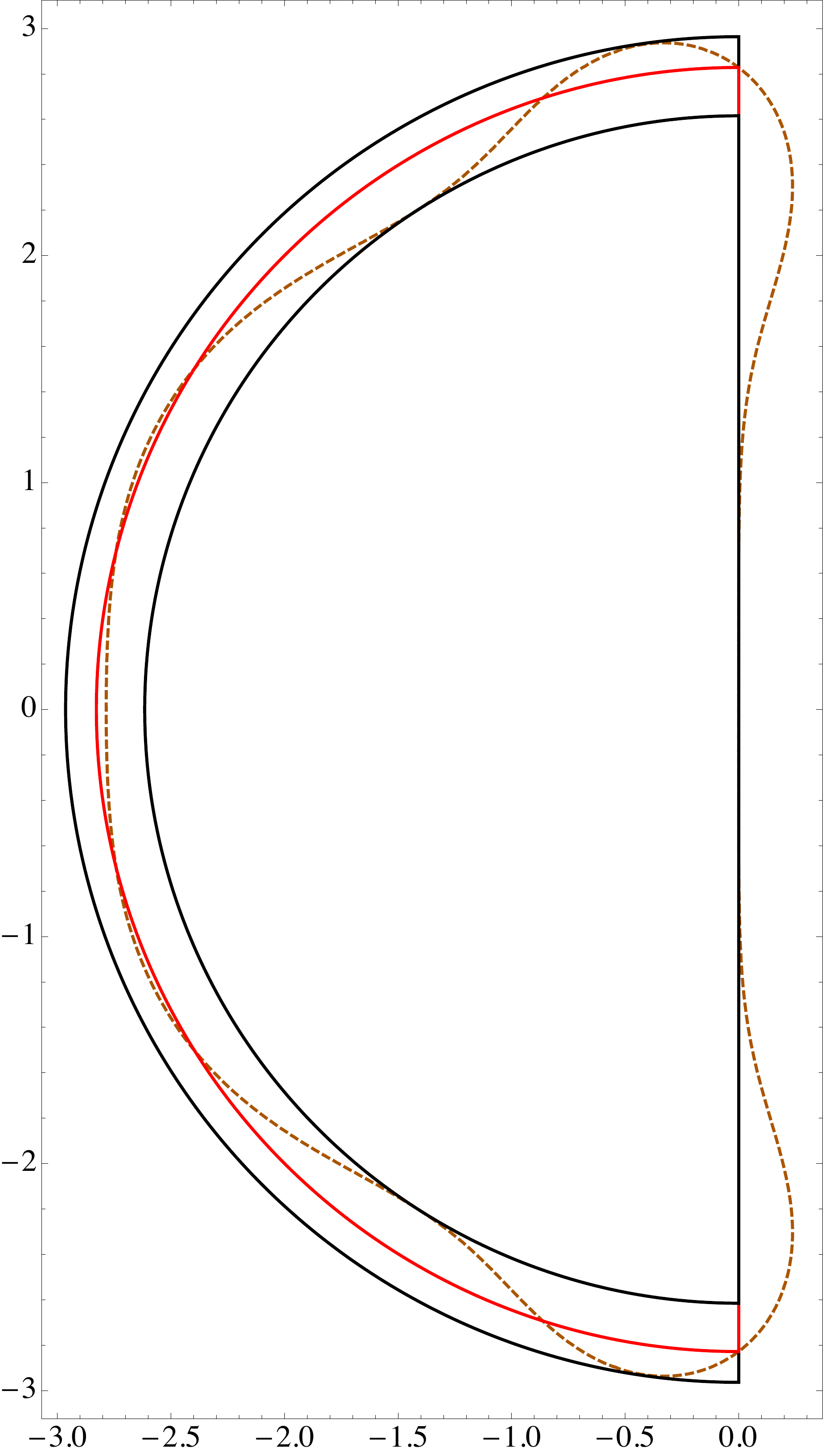}
  \caption{The figure shows the smallest left semi-disk  (outer black curve)  that contains /  the 
largest left semi-disk (inner black curve) that is contained in 
$\Us_4$ (dashed brown curve). The red semi-disk has 
radius $y_{4,1}=\sqrt{8}$ (notice that $\Vv_4$ is connected, so
$y_{4,1}=\displaystyle \max\left(\Vv_4\right)$); see the corresponding
row of Table \ref{maximalrhonstar} as well.
}\label{unscaled4}
\end{figure}

Thus it seems that the radius of the maximal semi-disk
included in $\Us_n$ tends to $\pi/2$
for $n \equiv 3 \mod 4$ and to $\pi$ for $n \equiv 0 \mod 4$.  But it is clear
from material in earlier sections that, excepting a small region near the imaginary
axis, $\Us_n$ covers a much larger semi-disk (of radius approximately $n/e$).
It is interesting to examine by how much the boundary of 
\[\Ss_n\cap \{\Re\le 0\}\cap \{\Im\ge 0\}\] 
deviates from 
the asymptotic semi-disk shape indicated by Theorem \ref{limitofSn}.

For a fixed $n\ge 1$ and some $\varphi\in \left[\frac{\pi}{2},\pi\right]$, let us define
the non-empty set
\begin{equation}\label{radialsliceinterval}
\Rs_n(\varphi):=\left\{r \in \Real : r\ge 0,\, r e^{i\varphi} \in \Ss_n \right\} .
\end{equation}
For each $2\le n\le 20$ we determined $\Rs_n(\varphi)$ for approximately 160
different $\varphi\in \left[\frac{\pi}{2},\pi\right]$ values.  
These investigations  suggest that if a small wedge near the imaginary axis 
is ignored, then $\Ss_n\cap \{\Re\le 0\}\cap \{\Im\ge 0\}$ is a starlike set in $\Complex$
 with respect to the origin.  In other words, we conjecture that for each $n\ge 1$ there exists a
 $\varphi_n\in \left[0,\frac{\pi}{2} \right)$ with $\varphi_n\ll 1$ such that for every $\varphi$ with
$\frac{\pi}{2}+\varphi_n\le \varphi \le \pi$, the set
$\Rs_n(\varphi)$ is a compact interval with $\min \left(\Rs_n(\varphi)\right)=0$.
For example, the following values of $\varphi_n$ seem to be appropriate:
$\varphi_{4}=0$; $\varphi_{6}=0$ with $\Rs_6(\pi/2)=\{0\}$ but $\Rs_6(\pi/2+\varepsilon)$ 
(for any $0<\varepsilon\le \pi/2$) being a non-degenerate interval; and there
is a suitable $\varphi_{20}$ already in
 $\left(0, 5\cdot 10^{-5}\right)$.  Figure \ref{radialslicesfigure} shows the graphs of 
\begin{equation}\label{radialsliceintervalmaxfunction}
\left[\frac{\pi}{2}+\varphi_n, \pi\right]\ni \varphi \mapsto \max \left(\Rs_n(\varphi)\right)
\end{equation}
 for $n=4$, $n=6$, and $n=20$, with linear interpolation between the approximately 160 different $\varphi$ values in each case.
Notice that---for a particular $n$---the value 
$\displaystyle \max_{\pi/2\le\varphi\le\pi} \left(\max \left(\Rs_n(\varphi)\right)\right)$ is found in Table \ref{farthestpointTaylorpolylefthalfplane}.
%after a scaling (that is, when the corresponding 
%value in Table \ref{farthestpointTaylorpolylefthalfplane} is multiplied by $n$). 
Meanwhile, $\displaystyle \min_{\pi/2\le\varphi\le\pi} \left(\max \left(\Rs_4(\varphi)\right)\right)$
corresponds to $\vr_4^*$ in Table \ref{maximalrhonstar}; also compare
the orange curve in Figure \ref{radialslicesfigure}  and the dashed brown curve in Figure \ref{unscaled4}.
As for the brown curve in Figure \ref{radialslicesfigure}, $\displaystyle \min_{\pi/2\le\varphi\le\pi} \left(\max \left(\Rs_6(\varphi)\right)\right)=0$, in accordance with the fact that $\Vv_6=\{0\}$
(see Figure \ref{checkerboard20}). Finally, as for the black curve in Figure \ref{radialslicesfigure}, 
the scaled function value around $20\cdot 0.445\approx 8.9$ corresponding to $\varphi\approx \pi/2+\varphi_{20}$ 
is the highest point of the upper shaded rectangle in
Figure \ref{checkerboard20}; the highest point of the lower shaded rectangle is
 $20\vr_{20}^*$ in Table \ref{maximalrhonstar}.

%\begin{figure}
%\begin{center}
%\subfigure[$n=4$\label{n4radialslices}]{
%\includegraphics[width=0.52\textwidth]{n4radialslices.pdf}}
%\subfigure[$n=6$\label{n6radialslices}]{
%\includegraphics[width=0.52\textwidth]{n6radialslices.pdf}}
%\subfigure[$n=20$\label{n20radialslices}]{
%\includegraphics[width=0.52\textwidth]{n20radialslices.pdf}}
%\caption{}
%\label{radialslicesfigure}
%\end{center}
%\end{figure}

\begin{figure}[h!]
  \centering
  \includegraphics[width=5.5in]{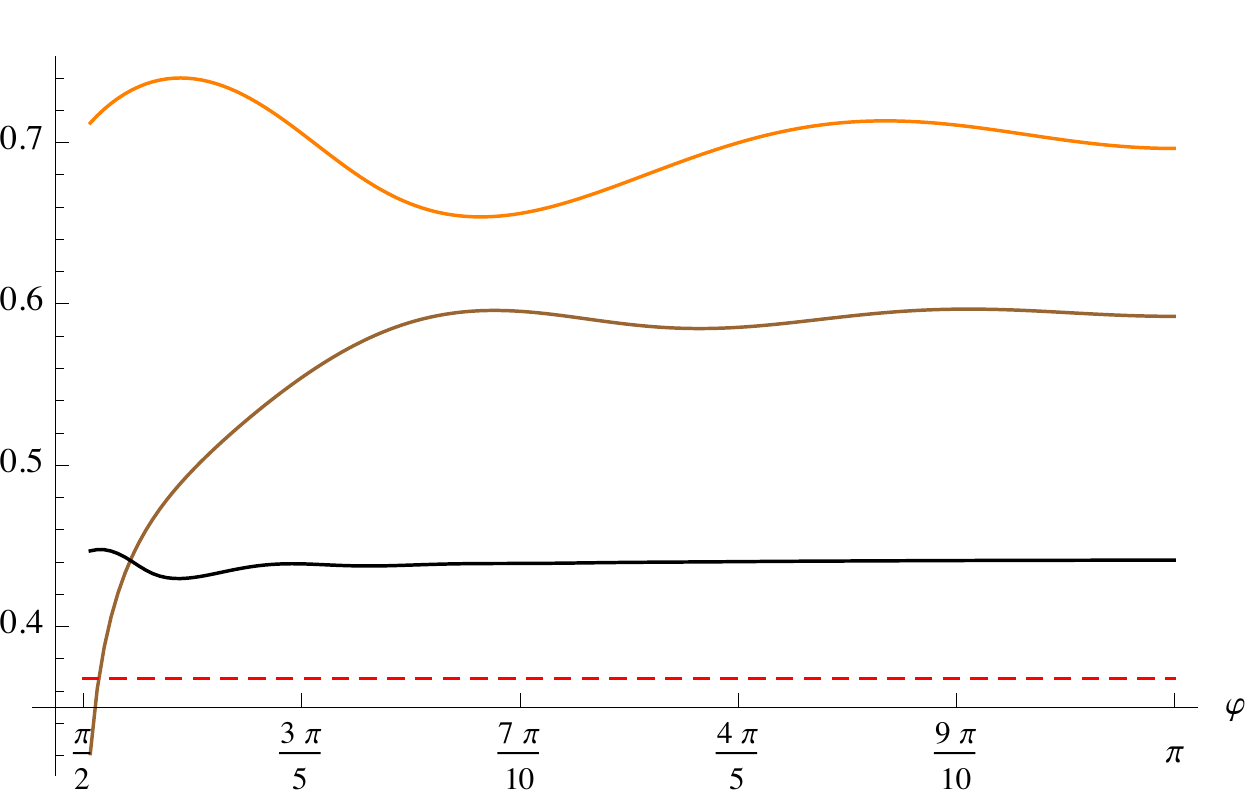}
  \caption{The extent of $\Ss_n$ as a function of the angle $\varphi$ (see (\ref{radialsliceinterval}) and  (\ref{radialsliceintervalmaxfunction})). The orange, brown and black curves
correspond to $n=4$, $n=6$ and $n=20$, respectively. The dashed red line
is placed at $1/e$.}\label{radialslicesfigure}
\end{figure}

\section{Auxiliary lemmas\label{sec:lemmas}}

Below we prove some additional results that were referenced and used in earlier sections. 

\begin{lem}\label{factorialestimatelemma}
For any $n\in\mathbb{N}^+$ we have
$
\left(\frac{n}{e}\right)^n \sqrt{2\pi n} < n! \le e \left(\frac{n}{e}\right)^n \sqrt{n}.
$
\end{lem}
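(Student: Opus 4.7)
The plan is to set $b_n := \frac{n!\, e^n}{n^n \sqrt{n}}$ and observe that the two claimed inequalities are simultaneously equivalent to the two-sided bound
\[
\sqrt{2\pi} < b_n \le e \qquad (n \in \mathbb{N}^+).
\]
A direct evaluation gives $b_1 = e$, so the upper bound will follow at once from monotonicity $b_{n+1} \le b_n$, while the lower bound will follow from strict monotonicity combined with the classical Stirling limit $\lim_{n \to +\infty} b_n = \sqrt{2\pi}$. Hence the whole argument reduces to showing that $\{b_n\}$ is strictly decreasing.

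To establish strict monotonicity I would compute
\[
\ln \frac{b_n}{b_{n+1}} = \left(n + \tfrac{1}{2}\right) \ln\!\left(1 + \tfrac{1}{n}\right) - 1
\]
and introduce the substitution $x := \frac{1}{2n+1} \in (0,1)$, for which $\frac{1+x}{1-x} = 1 + \frac{1}{n}$ and $n + \frac{1}{2} = \frac{1}{2x}$. Applying the classical identity $\ln\frac{1+x}{1-x} = 2\sum_{k \ge 0} \frac{x^{2k+1}}{2k+1}$ telescopes the above expression to the manifestly positive series
\[
\ln \frac{b_n}{b_{n+1}} = \sum_{k \ge 1} \frac{1}{(2k+1)(2n+1)^{2k}} > 0,
\]
so $\{b_n\}$ is strictly decreasing. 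Combined with $b_1 = e$, this yields $b_n \le e$ for every $n \ge 1$, which is exactly the right-hand inequality.

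For the left-hand inequality, the decreasing positive sequence $\{b_n\}$ is convergent; Stirling's formula identifies its limit as $\sqrt{2\pi}$, and strict monotonicity then forces $b_n > \sqrt{2\pi}$ for every $n$, as required. The only non-routine step is the substitution $x = 1/(2n+1)$ that rewrites the log-difference as an obviously positive series; the remaining ingredients are the trivial case $b_1 = e$ and the well-known value of the Stirling constant. If one wished to avoid quoting the limit, the same substitution refined by the Robbins-type bound $\sqrt{2\pi}\,e^{1/(12n+1)} < b_n$ would deliver the strict lower bound directly.
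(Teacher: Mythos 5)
Your proof is correct. The paper omits its own proof, calling it ``a standard monotonicity argument,'' and yours supplies exactly that: reduce both inequalities to $\sqrt{2\pi} < b_n \le e$ with $b_n = n!\,e^n/(n^n\sqrt n)$, show $b_n$ is strictly decreasing via the substitution $x = 1/(2n+1)$ and the resulting positive series $\ln(b_n/b_{n+1}) = \sum_{k\ge 1}\frac{1}{(2k+1)(2n+1)^{2k}}$, then conclude from $b_1 = e$ and the Stirling limit $\lim_{n\to\infty} b_n = \sqrt{2\pi}$.
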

\begin{proof} The proof is a standard monotonicity argument, hence omitted here.
\end{proof}

In the next lemma and later, we make use of the Lambert $W$ function (a.k.a. {\texttt{ProductLog} 
in \textit{Mathematica}): recall that for $x\ge-\frac{1}{e}$, there is a unique $W(x)\ge -1$ such that 
\begin{equation}\label{Wdefinition}
x=W(x) e^{W(x)}.
\end{equation}

\begin{lem}\label{strictlyconvexlemma} The set $\sig\subset\Complex$ is strictly convex.
\end{lem}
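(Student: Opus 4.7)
The plan is to reduce strict convexity of $\sig$ to strict concavity of a single-variable function $g(u):=\sqrt{V(u)}$ on a compact interval, and then to verify $g''<0$ via an elementary convexity argument.

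First, writing $z=u+iv$ and using $|ze^{1-z}|^2=(u^2+v^2)e^{2(1-u)}$, the defining inequality $|ze^{1-z}|\le 1$ is equivalent to $v^2 \le V(u)$, where $V(u):=e^{2(u-1)}-u^2$. On the set $\{V\ge 0\}$ the unit-disk constraint $u^2+v^2\le 1$ is automatically implied, since $V(u)\le 1-u^2\iff e^{2(u-1)}\le 1\iff u\le 1$. A short analysis of $V=0$ (via the Lambert $W$ function introduced just before Lemma 6.2) shows that $V\ge 0$ exactly on $[-r^*,1]$, where $r^*:=W(1/e)$ is the positive number satisfying $r^*+\ln r^*=-1$, and $V>0$ on the open interval. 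Thus
\[
\sig = \{(u,v)\in\Real^2 : -r^* \le u \le 1,\ v^2 \le V(u)\}.
\]

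Next, by the symmetry $v\mapsto -v$, strict convexity of $\sig$ reduces to strict concavity of $g(u):=\sqrt{V(u)}$ on $[-r^*,1]$, provided the two endpoints behave suitably: at $u=-r^*$, $V$ has a simple zero (since $V'(-r^*)=2r^*(1+r^*)>0$), so the boundary has a smooth vertical tangent there; at $u=1$, one checks $V(1)=V'(1)=0$ and $V''(1)=2$, so the upper and lower boundaries meet at $(1,0)$ in a strictly convex corner of interior angle $\pi/2$. Strict concavity of $g$ on $(-r^*,1)$ then rules out any line segment in $\partial\sig$, which gives strict convexity of $\sig$.

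To prove $g''<0$, I would use the identity $g''=[2V V''-(V')^2]/(4V^{3/2})$. Writing $E:=e^{2(u-1)}$ and computing directly from $V=E-u^2$, $V'=2E-2u$, $V''=4E-2$, one obtains the algebraic identity
\[
V'(u)^2-2V(u)V''(u)=4E\bigl(1-2u+2u^2-E\bigr),
\]
so $g''<0$ on $(-r^*,1)$ reduces to showing that $h(u):=1-2u+2u^2-e^{2(u-1)}>0$ on that interval. But $h(1)=0$, the derivative $h'(u)=-2+4u-2e^{2(u-1)}$ also vanishes at $u=1$, and $h''(u)=4-4e^{2(u-1)}>0$ for all $u<1$. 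Strict convexity of $h$ on $(-\infty,1]$ combined with $h'(1)=0$ forces $h'<0$ on $(-\infty,1)$, hence $h$ is strictly decreasing there, which together with $h(1)=0$ yields $h>0$ on $(-\infty,1)$, as desired.

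The main obstacle I anticipate is not the convexity calculation itself but the careful endpoint analysis: one must verify that the simple-zero behavior of $V$ at $u=-r^*$ and the convex-corner behavior at $u=1$ genuinely combine with strict concavity of $g$ on the open interval to preclude any line segment in $\partial\sig$. The algebraic identity for $(V')^2-2VV''$ is routine but worth checking once carefully, and the identification of the left endpoint via the Lambert $W$ function should be stated explicitly.
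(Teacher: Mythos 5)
Your proof takes essentially the same route as the paper's: the paper writes $\sig=\{(x,y)\in\Real^2: -W(1/e)\le x\le 1,\ |y|\le\sqrt{e^{2x-2}-x^2}\}$ and then simply asserts that $\bigl(\sqrt{e^{2x-2}-x^2}\bigr)''<0$ on the open interval, while you actually carry out that check. Your reduction via $(V')^2-2VV''=4E\,(1-2u+2u^2-E)$ to the inequality $h(u):=1-2u+2u^2-e^{2(u-1)}>0$, and the argument $h''>0$ on $(-\infty,1)$ together with $h(1)=h'(1)=0$, are correct; the endpoint discussion (vertical tangent at $-W(1/e)$ from the simple zero of $V$, corner of interior angle $\pi/2$ at $1$) is a sound supplement that the paper omits.

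One slip in your setup should be fixed, although it does not damage the final argument. The claim that $V\ge 0$ exactly on $[-W(1/e),1]$ is false: since $e^{u-1}\ge u$ for all real $u$ (the function $e^{u-1}-u$ has its global minimum $0$ at $u=1$), one has $V(u)=(e^{u-1}-u)(e^{u-1}+u)\ge 0$ for every $u\ge 0$, e.g.\ $V(2)=e^2-4>0$. Correspondingly, the unit-disk factor in the definition of $\sig$ is \emph{not} redundant on $\{V\ge 0\}$: the point $z=2$ satisfies $|z e^{1-z}|=2/e<1$ but lies outside $D_1$. The constraint $u\le 1$ must therefore be imported from the $D_1$ intersection; once that is done, your displayed description of $\sig$ is correct (for $-W(1/e)\le u\le 1$ and $v^2\le V(u)$ one has $u^2+v^2\le e^{2(u-1)}\le 1$), and the rest of the proof goes through unchanged.
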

\begin{proof}
By identifying $\Complex$ with $\Real^2$, we see that
\[
\sig=\{(x,y)\in\Real^2 : -W(1/e)\le x\le 1,-\sqrt{e^{2 x-2}-x^2}\le y\le \sqrt{e^{2 x-2}-x^2} \}.
\]
The proof is finished by checking that $\left(\sqrt{e^{2 x-2}-x^2}\right)^{\prime\prime}<0$ for
 $-W(1/e)< x< 1$.
\end{proof}

\begin{rem}
The value of $-W(1/e)$ is $\approx -0.278464543$.
\end{rem}

\begin{lem}\label{d1/4}
The inclusion $D_{1/4}\subset\sig$ holds.
\end{lem}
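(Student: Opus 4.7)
The plan is to show the two defining conditions of $\sig$ separately on $D_{1/4}$. The inclusion $D_{1/4}\subset D_1$ is trivial. For the Szeg\H{o}-curve condition, I would argue pointwise: for any $z\in D_{1/4}$,
\[
\left|z\,e^{1-z}\right| \;=\; |z|\,e^{\Re(1-z)} \;=\; |z|\,e^{1-\Re(z)} \;\le\; |z|\,e^{1+|z|},
\]
using $\Re(z)\ge -|z|$. So it is enough to prove that the real function $g(r):=r\,e^{1+r}$ satisfies $g(r)\le 1$ for every $r\in[0,1/4]$.

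Next I would note that $g$ is strictly increasing on $[0,\infty)$ (both factors are nonnegative and increasing), so the question reduces to the single numerical inequality $g(1/4)=\tfrac{1}{4}e^{5/4}\le 1$, i.e., $e^{5/4}\le 4$. This is equivalent to $5/4\le \ln 4 = 2\ln 2$, which holds since $2\ln 2>2\cdot 0.693 = 1.386 > 1.25$. Hence $|z\,e^{1-z}|\le 1$ for every $z\in D_{1/4}$, establishing $D_{1/4}\subset \sig$.

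I expect no real obstacle here; the only step that is not a one-line bookkeeping calculation is the numerical verification $e^{5/4}<4$, and even that can be avoided by appealing to the Lambert $W$ function already introduced in (\ref{Wdefinition}): the largest $\vr>0$ for which $D_\vr\subset\sig$ is characterized by $\vr\,e^{1+\vr}=1$, equivalently $\vr=W(1/e)$, and by the remark following Lemma \ref{strictlyconvexlemma} one has $W(1/e)\approx 0.278464>1/4$. Either justification closes the proof in a few lines.
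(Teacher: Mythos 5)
Your main argument is exactly the paper's proof, only spelled out in more detail: both use $\Re(z)\ge -|z|\ge -1/4$ to bound $|z\,e^{1-z}|\le \tfrac14 e^{5/4}<1$. The alternative you sketch via the Lambert $W$ function is also correct (noting $\max_{|z|=r}|z\,e^{1-z}|=r\,e^{1+r}$, and $r\,e^{1+r}=1$ rewrites as $r\,e^{r}=1/e$, so $r=W(1/e)\approx 0.2785>1/4$), and in fact it identifies the sharp radius, but it is not what the paper does.
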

\begin{proof}
Since $|z|\le 1/4$ implies $\Re(z)\ge -1/4$, we have $|z e^{1-z}|\le \frac{1}{4} e^{5/4}<1$.
\end{proof}

\begin{lem} \label{lowerboundCrhosig}
We have $C_{1/e}\cap \sig=\{\pm i/e\}$. On the other hand,
for any $\vr>\frac{1}{e}$, $C_\vr\cap \sig=\varnothing$ and 
\[\left|C_\vr,\sig \right|\ge \frac{\vr e-1}{\sqrt{e^2+1}}.\]
\end{lem}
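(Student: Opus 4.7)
The plan is to treat the three assertions separately.

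For $C_{1/e}\cap\sig=\{\pm i/e\}$, I would parametrize $z=\tfrac{1}{e}e^{i\theta}$ for $\theta\in[\pi/2,3\pi/2]$ and compute
\[
|ze^{1-z}|=\tfrac{1}{e}\,e^{1-\Re z}=e^{-\cos\theta/e}.
\]
Since $\cos\theta\le 0$ on this interval, the quantity is $\ge 1$ with equality exactly when $\cos\theta=0$, i.e., $z=\pm i/e$; combined with $|z|=1/e\le 1$ this locates $C_{1/e}\cap\sig$. For $\vr>1/e$ the same parametrization yields $|ze^{1-z}|=\vr e^{1-\Re z}\ge \vr e>1$, so $C_\vr\cap\sig=\varnothing$.

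For the distance bound, the key ingredient is the tangent line to $\partial\sig$ at $i/e$. Implicit differentiation of $x^2+y^2=e^{2x-2}$ at $(0,1/e)$ gives slope $1/e$, so the tangent is $L_+:-x+ey=1$. By Lemma \ref{strictlyconvexlemma} the set $\sig$ is strictly convex, and since the origin belongs to $\sig$ and satisfies $-0+e\cdot 0<1$, the line $L_+$ is a supporting line and $\sig\subset\{-x+ey\le 1\}$. The signed distance from $i\vr$ to $L_+$ on the outward side equals exactly $(e\vr-1)/\sqrt{e^2+1}$, so
\[
|i\vr,\sig|\ge\frac{e\vr-1}{\sqrt{e^2+1}},
\]
and the same bound holds at $-i\vr$ by the symmetry of $\sig$ about the real axis.

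To promote the endpoint bound to the full arc $C_\vr$ I study $g(\theta):=|\vr e^{i\theta},\sig|$ on $[\pi/2,\pi]$, with the lower half handled by conjugation symmetry. Strict convexity of $\sig$ yields a unique nearest point $w(\theta)\in\partial\sig$, and the envelope (Danskin) theorem gives
\[
g(\theta)\,g'(\theta)=-\vr\,\Im\bigl(e^{-i\theta}w(\theta)\bigr).
\]
At an interior critical point $w(\theta)$ must lie on the ray $\{te^{i\theta}:t\in\mathbb{R}\}$; substituting $w=te^{i\theta}$ into the first-order optimality condition $z-w\parallel\nabla F(w)$ with $F(w):=\log|w|+1-\Re w$ (so that $\nabla F(w)$ corresponds to the complex number $e^{i\theta}/t-1$) forces $\sin\theta=0$, incompatible with $\theta\in(\pi/2,\pi)$. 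Hence $g$ is monotone on $[\pi/2,\pi]$ and its minimum is attained at an endpoint. The bound at $\theta=\pi/2$ was just established; at $\theta=\pi$, real-axis symmetry together with $\sig\cap\mathbb{R}=[-W(1/e),1]$ (the real solutions of $x^2=e^{2x-2}$ being $x=1$ and $x=-W(1/e)$) gives $g(\pi)=\vr-W(1/e)$, and the inequality
\[
\vr-W(1/e)\ge\frac{e\vr-1}{\sqrt{e^2+1}}
\]
follows by comparing slopes in $\vr$: both sides are linear with slopes $1$ and $e/\sqrt{1+e^2}<1$, respectively, and at $\vr=1/e$ the left side equals $1/e-W(1/e)>0$ while the right side vanishes.

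The main obstacle is the first-order analysis ruling out interior critical points of $g$; the remaining steps reduce to direct computation and the elementary comparison above.
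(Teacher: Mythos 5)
Your proof is correct, and for the key distance inequality it takes a genuinely different route from the paper. The first two assertions ($C_{1/e}\cap\sig=\{\pm i/e\}$ and $C_\vr\cap\sig=\varnothing$) you handle essentially as the paper does, via $|ze^{1-z}|=\vr\,e^{1-\Re z}$. For the distance bound, the paper considers, for each $\vf\in(\pi/2,\pi)$, the point $\textbf{B}(\vf)$ where the ray from $0$ at angle $\vf$ meets $\partial\sig$, computes the tangent line there explicitly in terms of the Lambert $W$ function, takes the perpendicular distance $|\textbf{A}(\vf),\textbf{C}(\vf)|$ from $\vr e^{i\vf}$ to that tangent as a convexity lower bound for $|\vr e^{i\vf},\sig|$, and then proves by an explicit derivative computation that this quantity is strictly increasing in $\vf$ with limit $\frac{\vr e-1}{\sqrt{e^2+1}}$ as $\vf\to\pi/2^+$. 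You instead use a single supporting line, the tangent at $i/e$ (slope $1/e$, equation $-x+ey=1$), to obtain the bound at the one endpoint $\theta=\pi/2$, and then rule out interior critical points of $g(\theta)=|\vr e^{i\theta},\sig|$ via the Danskin formula $gg'=-\vr\,\Im(e^{-i\theta}w(\theta))$ combined with the first-order condition $z-w\parallel\nabla F(w)$, $F(w)=\log|w|+1-\Re w$; the other endpoint $\theta=\pi$ is handled by the explicit value $g(\pi)=\vr-W(1/e)$ and an elementary slope comparison. This avoids the paper's Lambert-$W$ algebra, at the modest cost of some regularity bookkeeping that your sketch should make explicit: the parallelism condition actually yields $(\vr-t)\sin\theta=0$ rather than $\sin\theta=0$ outright (the alternative $t=\vr$ is excluded because $\vr e^{i\theta}\notin\sig$); for $\theta\in(\pi/2,\pi)$ the unique nearest point $w(\theta)=te^{i\theta}$ cannot be the corner $1$ or the vertex $-W(1/e)$ (those would force $\theta\in\{0,\pi\}$) nor $0$ (an interior point of $\sig$, since $D_{1/4}\subset\sig$), so $w(\theta)$ lies on the smooth part of $\partial\sig$ where the gradient computation is valid; and uniqueness of the nearest point, which licenses Danskin, follows from convexity of $\sig$.
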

\begin{proof}
The intersections $C_\vr\cap \sig$ (for $\vr\ge \frac{1}{e}$) are determined by using the explicit representation of $\sig$ given in  
the proof of Lemma \ref{strictlyconvexlemma} combined with the inequality 
\[
\sqrt{x^2+\left(\sqrt{e^{2 x-2}-x^2}\right)^2}\le \frac{1}{e}
\]
valid for $-W(1/e)\le x\le 0$, and noticing that ``$=$" in ``$\le$" above holds precisely for $x=0$. 
As for estimating the distance between the sets $C_\vr$ and $\sig$ (viewed as subsets of
$\Real^2$), let us fix some $\vr>\frac{1}{e}$
and $\varphi\in \left[\frac{\pi}{2} , \frac{3\pi}{2} \right]$, and consider 
the point $(\vr \cos\varphi,\vr \sin \varphi) \in C_\vr$, depicted as point \textbf{A} on Figure \ref{Wconstruction}. Due to symmetry, we can assume $\varphi\in \left[\frac{\pi}{2} , \pi\right]$.
Then the line passing through point \textbf{A} and the origin intersects $\sig$ at point \textbf{B}. 
We consider the tangent line to $\sig$ at \textbf{B}. The closest point on this tangent line
to \textbf{A} is point \textbf{C}. Since $\sig$ is convex, the distance
$|\textbf{A}, \textbf{C}|$ is a lower estimate for $|\textbf{A}, \sig |$. We keep $\vr$ fixed, but
vary $\vf$, so $|\textbf{A}, \textbf{C}|\equiv |\textbf{A}(\vf), \textbf{C}(\varphi)|$. Then clearly,
\[
\left|C_\vr,\sig \right|\ge \inf_{\vf\in \left[\frac{\pi}{2} , \pi\right]} |\textbf{A}(\vf), \textbf{C}(\vf)|.
\]

 In Step 1 below, we show that for $\frac{\pi}{2}<\vf<\pi$ we have
\[
|\textbf{A}(\vf), \textbf{C}(\vf)|=
\frac{\left|W\left(-\frac{\cos \vf}{e}\right)+1\right| \left|W\left(-\frac{\cos\vf}{e}\right)+\vr  \cos\vf\right|}{\sqrt{\left( W\left(-\frac{\cos\vf}{e}\right)+\cos^2\vf\right)^2+\cos^2\vf\sin^2\vf}}.
\]
Obviously, the function $\vf \mapsto |\textbf{A}(\vf), \textbf{C}(\vf)|$ is continuous at 
$\vf=\frac{\pi}{2}$ and $\vf=\pi$ as well. 

On the other hand, we show  in Step 2 below that
$\left(\frac{\pi}{2} , \pi\right)\ni \vf \mapsto |\textbf{A}(\vf), \textbf{C}(\vf)|$ is
strictly increasing, and 
\[
\lim_{\vf\to\frac{\pi}{2}^+}|\textbf{A}(\vf), \textbf{C}(\vf)|=\frac{\vr e-1}{\sqrt{e^2+1}}.
\]

Consequently,  $\left|C_\vr,\sig \right|\ge \inf_{\vf\in \left[\frac{\pi}{2} , \pi\right]} |\textbf{A}(\vf), \textbf{C}(\vf)|=\frac{\vr e-1}{\sqrt{e^2+1}}$, so the proof of the lemma is finished by proving Steps 1 and 2.

\textbf{Step 1.} Let us fix some $\vr>\frac{1}{e}$
and $\frac{\pi}{2}<\vf<\pi$. 
We see from the construction that the coordinates of \textbf{B}$\, = (x, y)$ satisfy 
$y=\sqrt{e^{2 x-2}-x^2}$ and $y=x \tan\vf$, so (by using $\tan^2\vf+1=\frac{1}{\cos^2\vf}$)
we have 
$\left|\frac{x}{\cos\vf}\right|=\frac{x}{\cos\vf}=e^{x-1}$, that is,
$x=-W\left(-\frac{\cos \vf}{e}\right)$. This yields
\[
\mathbf{B}=\left(-W\left(-\frac{\cos \vf}{e}\right),-W\left(-\frac{\cos \vf}{e}\right)\tan\vf  \right),
\]
so the tangent line to $\sig$ at \textbf{B} has slope 
\[
\left(\sqrt{e^{2 x-2}-x^2}\right)^\prime\Big|_{x=-W\left(-\frac{\cos \vf}{e}\right)}=
\]
\[
\frac{e^{-2 W\left(-\frac{\cos\vf}{e}\right)-2}+W\left(-\frac{\cos\vf}{e}\right)}{\sqrt{e^{-2 W\left(-\frac{\cos\vf}{e}\right)-2}-W\left(-\frac{\cos\vf}{e}\right)^2}}
=-\frac{1}{\sin\vf \cos\vf}
\left( W\left(-\frac{\cos\vf}{e}\right)+\cos^2\vf\right).
\]
Now we see that the equation for the line passing through
points \textbf{B} and \textbf{C} can be written as $a x+b y+c=0$ with 
\[
a=-\frac{1}{\sin\vf \cos\vf}
\left( W\left(-\frac{\cos\vf}{e}\right)+\cos^2\vf\right),
\]
\[
b=-1
\]
and
\[
c=-W\left(-\frac{\cos \vf}{e}\right)\tan\vf  -\frac{1}{\sin\vf \cos\vf}
\left( W\left(-\frac{\cos\vf}{e}\right)+\cos^2\vf\right) W\left(-\frac{\cos \vf}{e}\right)=
\]
\[
-\frac{1}{\sin\vf \cos\vf} W\left(-\frac{\cos\vf}{e}\right)
   \left(W\left(-\frac{\cos\vf}{e}\right)+1\right).
\]
Therefore, 
the distance from point \textbf{A}$\,=(\vr \cos\varphi,\vr \sin \varphi)$ to this line is 
given by
\[\frac{|a \vr \cos\varphi+b \vr \sin\varphi+c|}{\sqrt{a^2+b^2}}=
\frac{\left|W\left(-\frac{\cos \vf}{e}\right)+1\right| \left|W\left(-\frac{\cos\vf}{e}\right)+\vr  \cos\vf\right|}{\sqrt{\left( W\left(-\frac{\cos\vf}{e}\right)+\cos^2\vf\right)^2+\cos^2\vf\sin^2\vf}}.
\]

\textbf{Step 2.} It is convenient to set $w\equiv w(\vf):=W\left(-\frac{\cos \vf}{e}\right)$. Then
$w$ is a strictly increasing bijection, mapping $\left[\frac{\pi}{2} , \pi\right]$ onto 
$\left[0,W\left(\frac{1}{e}\right)\right]$. For $\frac{\pi}{2}<\vf<\pi$, that is for
$0<w<W\left(\frac{1}{e}\right)$, we have $\cos\vf =-we^{w+1}$ and
$\sin \vf=\sqrt{1-e^{2 w+2} w^2}$.
Hence by Step 1 we have
\[
|\textbf{A}(\vf), \textbf{C}(\vf)|=\frac{|w+1| \left|1-\vr  e^{w+1}\right|}{\sqrt{e^{2 w+2} (2 w+1)+1}}.
\]
Now $w+1>w>0$, and for $\vr>\frac{1}{e}$ one has
$1-\vr  e^{w+1}<0$, 
so
\[
|\textbf{A}(\vf), \textbf{C}(\vf)|=\frac{(w+1) \left(\vr  e^{w+1}-1\right)}{\sqrt{e^{2 w+2} (2 w+1)+1}}.
\]
On the other hand, it is trivial that
\[
\partial_w \left(\frac{(w+1) \left(\vr  e^{w+1}-1\right)}{\sqrt{e^{2 w+2} (2 w+1)+1}}\right)=
\frac{e^{2 w+2} \left(2 w^2+2 w+1\right)+\vr  e^{3 w+3} w+\vr  e^{w+1}
   (w+2)-1}{\left(e^{2 w+2} (2 w+1)+1\right)^{3/2}}>0
\]
for any $\vr>\frac{1}{e}$ and $w>0$, so $\left(\frac{\pi}{2} , \pi\right)\ni \vf \mapsto |\textbf{A}(\vf), \textbf{C}(\vf)|$ is
strictly increasing, and 
\[
\lim_{\vf\to\frac{\pi}{2}^+}|\textbf{A}(\vf), \textbf{C}(\vf)|=
\lim_{w\to 0^+} \frac{(w+1) \left(\vr  e^{w+1}-1\right)}{\sqrt{e^{2 w+2} (2 w+1)+1}}=
\frac{\vr e-1}{\sqrt{e^2+1}}.
\]
\end{proof}

\begin{figure}[h!]
  \centering
  \includegraphics[width=4in]{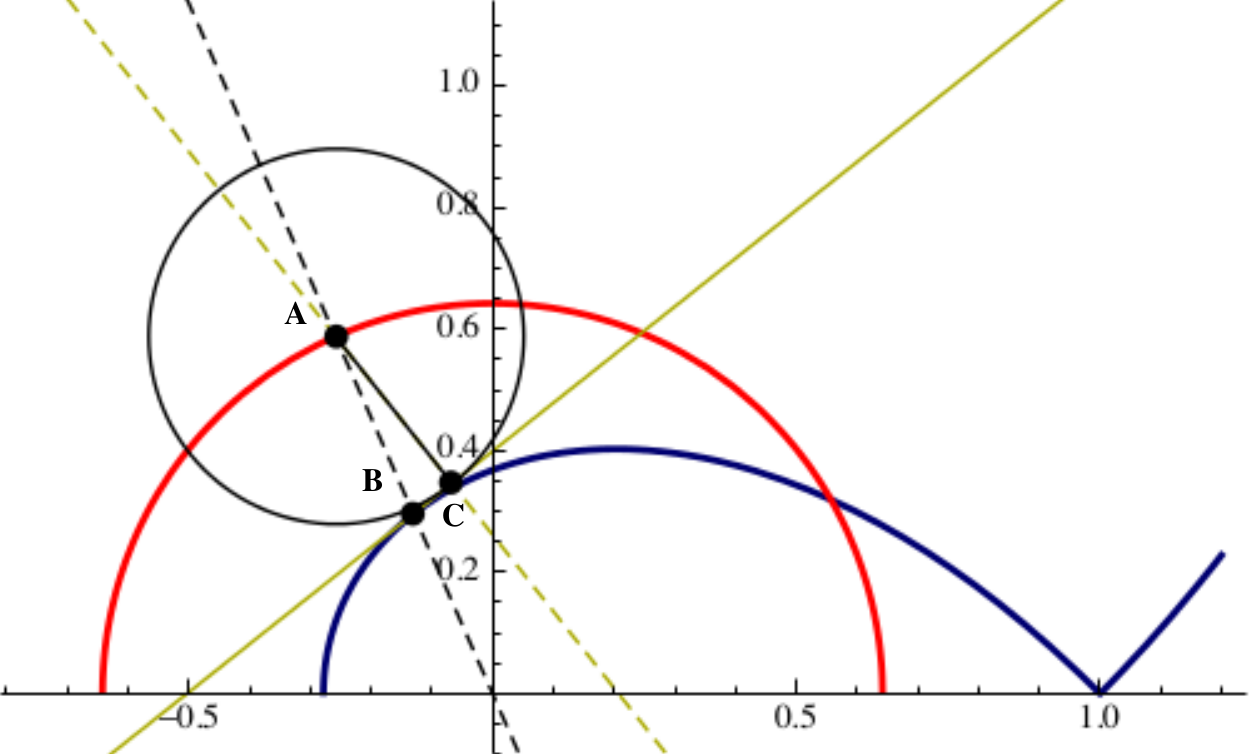}
  \caption{The construction used in the proof of Lemma \ref{lowerboundCrhosig}.}\label{Wconstruction}
\end{figure}

\begin{rem} For $\vr>\frac{1}{e}$ we have $i\vr\in C_\vr$ and $\frac{i}{e}\in \sig$, so
%
%It is seen from the proof of the previous lemma that for any $\vr>\frac{1}{e}$ we have
%\[
%\left|C_\vr,\sig \right|\le \inf_{\vf\in \left(\frac{\pi}{2} , \pi\right)} |\mathbf{A}(\vf), \mathbf{B}(\vf)|.
%\]
%But 
%\[
%|\mathbf{A}(\vf), \mathbf{B}(\vf)|=
%\sqrt{\left(\vr \cos\vf+W\left(-\frac{\cos\vf}{e}\right)\right)^2+\left(\vr \sin\vf+W\left(-\frac{\cos\vf}{e}\right)\tan\vf \right)^2}=
%\]
%\[
%\left|\vr+\frac{1}{\cos\vf}W\left(-\frac{\cos\vf}{e}\right)\right|=|\vr-e^{-w-1}|=\vr-e^{-w-1},
%\]
%so
%\[
%\inf_{\vf\in \left(\frac{\pi}{2} , \pi\right)} |\mathbf{A}(\vf), \mathbf{B}(\vf)|=
%\lim_{w\to 0^+} \left(\vr-e^{-w-1}\right)=\vr-\frac{1}{e}.
%\]
%Therefore we get 
\[|C_\vr,\sig |\le \vr-\frac{1}{e}.\]
\end{rem}

\begin{lem}\label{infsupsuplemma}
For any $\sigma>0$ we have 
\begin{equation}\label{zz-1inf}
\inf\left\{\left| \frac{w}{w-1}\right| : w\in\Complex, |w|\ge \sigma\right\}
=\frac{\sigma}{1+\sigma}.
\end{equation}
On the other hand, we have
\begin{equation}\label{zz-1sup1}
\forall \vr\in \left[0.95,\frac{1+\sqrt{1+e^2}}{e}\right] : \quad 
\sup\left\{\left| \frac{w}{w-1}\right|: w\in\Complex, |w,\sig|\ge \delta_\vr, \Re(w)\le \delta_\vr\right\}\le 1,
\end{equation}
and
\begin{equation}\label{zz-1sup2}
\forall \vr\in \left(\frac{1+\sqrt{1+e^2}}{e},2 \right] : \quad 
\sup\left\{\left| \frac{w}{w-1}\right|: w\in\Complex, |w,\sig|\ge \delta_\vr, \Re(w)\le \delta_\vr\right\}\le 1.39,
\end{equation}
where
\begin{equation}\label{deltarhodef}
\delta_\vr:=\frac{\vr e-1}{2\sqrt{e^2+1}}.
\end{equation}
\end{lem}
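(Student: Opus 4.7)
The proof splits into the three assertions (\ref{zz-1inf}), (\ref{zz-1sup1}), (\ref{zz-1sup2}); all three can be handled by elementary Euclidean geometry of the M\"obius map $w\mapsto w/(w-1)$, whose level sets are circles of Apollonius. For (\ref{zz-1inf}) the plan is to combine the triangle inequality $|w-1|\le |w|+1$ with monotonicity of $t\mapsto t/(t+1)$: these give $|w/(w-1)|\ge|w|/(|w|+1)\ge\sigma/(\sigma+1)$ for every $w$ with $|w|\ge\sigma$, and equality is realized at $w=-\sigma$, where $|w-1|=\sigma+1$.

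For (\ref{zz-1sup1}) I first check by direct algebra on (\ref{deltarhodef}) that the right endpoint of the stated $\vr$-range, $(1+\sqrt{1+e^2})/e$, is precisely the value at which $\delta_\vr=1/2$; hence throughout the range we have $\delta_\vr\le 1/2$. Writing $w=u+iv$, the identity $|w|^2-|w-1|^2=2u-1$ immediately gives $|w/(w-1)|\le 1$ whenever $u\le 1/2$. The additional constraint $|w,\sig|\ge\delta_\vr$ only shrinks the admissible set and preserves the bound.

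For (\ref{zz-1sup2}) the crucial observation is that $1\in\sig$, since $|1\cdot e^{1-1}|=1$ and $|1|\le 1$. Therefore any $w$ with $|w,\sig|\ge\delta_\vr$ automatically satisfies $|w-1|\ge\delta_\vr$, and enlarging the admissible region to $\{w:|w-1|\ge\delta_\vr,\,\Re(w)\le\delta_\vr\}$ can only increase the supremum. On this larger region I use the identity
\[
\left|\frac{w}{w-1}\right|^2 = 1 + \frac{2\Re(w)-1}{|w-1|^2}.
\]
Since $\delta_\vr>1/2$ throughout the $\vr$-range of (\ref{zz-1sup2}), both numerator and denominator are positive at any candidate maximizer, so the numerator is bounded above by $2\delta_\vr-1$ and the denominator below by $\delta_\vr^2$. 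This yields $|w/(w-1)|^2\le 1+(2\delta_\vr-1)/\delta_\vr^2$, and the bound is in fact attained at $w=\delta_\vr\pm i\sqrt{2\delta_\vr-1}$, which does lie in the enlarged region. The function $\delta\mapsto(2\delta-1)/\delta^2$ has derivative $2(1-\delta)/\delta^3$ and is therefore increasing on $(0,1)$, so the supremum over the stated $\vr$-range is achieved at $\vr=2$. A direct numerical evaluation with $\delta_2=(2e-1)/(2\sqrt{e^2+1})\approx 0.766$ then gives $\sqrt{1+(2\delta_2-1)/\delta_2^2}\approx 1.381<1.39$.

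The main work lies in (\ref{zz-1sup2}): carrying out the two-variable optimization of $|w/(w-1)|^2$ subject to $\Re(w)\le\delta_\vr$ and $|w-1|\ge\delta_\vr$, and then exploiting monotonicity in $\delta_\vr$ to reduce the entire range to a single numerical check at $\vr=2$. The other two parts are essentially immediate once the right geometric picture is in place.
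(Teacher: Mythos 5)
Your proof is correct and follows essentially the same route as the paper's. The one substantive remark is that for (\ref{zz-1inf}) you replace the paper's coordinate verification with the cleaner triangle-inequality argument $|w/(w-1)|\ge|w|/(|w|+1)\ge\sigma/(1+\sigma)$, and for (\ref{zz-1sup2}) you make explicit what the paper dismisses as ``elementary computation'': the identity $|w/(w-1)|^2=1+(2\Re(w)-1)/|w-1|^2$, the fact that with positive numerator the ratio is maximized at $\Re(w)=\delta_\vr$ and $|w-1|=\delta_\vr$ (you should add the one-line case distinction that when $\Re(w)\le 1/2$ the quantity is $\le 1$ and therefore harmless), and the monotonicity of $\delta\mapsto(2\delta-1)/\delta^2$ on $(1/2,1)$, which reduces the whole $\vr$-range to the single check at $\vr=2$. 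The paper instead writes the supremum as a closed-form algebraic expression in $\vr$ and asserts the bound $<1.39$; your monotonicity argument is a tidier way to reach the same numeric conclusion.
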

\begin{proof}
For (\ref{zz-1inf}), we rewrite the expression as
\[
\inf\left\{{\sqrt{x^2+y^2}}/{\sqrt{(x-1)^2+y^2}}: x, y\in\Real, \sqrt{x^2+y^2}\ge \sigma \right\},
\] and see that 
$\sqrt{(-\sigma)^2+0^2}/\sqrt{(-\sigma-1)^2+0^2}=\sigma/(1+\sigma)$. On the other hand, we 
directly verify that
\[
\frac{\sqrt{x^2+y^2}}{\sqrt{(x-1)^2+y^2}}<\frac{\sigma}{1+\sigma}, \quad \sqrt{x^2+y^2}\ge \sigma 
\]
has no real $x$, $y$ solutions for any $\sigma>0$.

As for (\ref{zz-1sup1}), 
we have $0<\delta_\vr\le 1/2$ for the given $\vr$ values, so 
\[
\sup\left\{\left| \frac{w}{w-1}\right|: w\in\Complex, |w,\sig|\ge \delta_\vr, \Re(w)\le \delta_\vr\right\}\le
\sup\left\{\left| \frac{w}{w-1}\right|: w\in\Complex, \Re(w)\le \frac{1}{2}\right\}
=1.\]
Finally, for (\ref{zz-1sup2}), we notice that, due to $1\in \sig$, $|w,\sig|\ge \delta_\vr$ implies  
$|w,1|\ge \delta_\vr$, so the supremum is estimated from above by
$\sup\left\{\left| \frac{w}{w-1}\right|: w\in\Complex, |w-1|\ge \delta_\vr, \Re(w)\le \delta_\vr\right\}$.
Elementary computation shows that for each $\vr$ in the given range and for $w\in\Complex$ with
$|w-1|\ge \delta_\vr$ and $\Re(w)\le \delta_\vr$, 
$\left| \frac{w}{w-1}\right|$ is maximal when $|w-1|= \delta_\vr=\Re(w)$. For such $w$,
 the maximal value of $\left| \frac{w}{w-1}\right|$ is 
\[
\frac{\sqrt{e^2 \left(\vr ^2-4\right)+2 e \left(2 \sqrt{1+e^2}-1\right) \vr -4 \sqrt{1+e^2}-3}}{\vr  e-1}
<1.39.
\]
\end{proof}

\bibliographystyle{plain}
\bibliography{stability_region}

\end{document}